\newtheoremstyle{droit}
{}%Space above
{}%Space below
{\upshape}%Body font
{}%Indent amount (empty = no indent, \parindent = para indent)
{\bfseries}%theoreme head font
{}%Punctuation after theoreme head
{ }%{\newline}%Space after theoreme head: " " = normal interword space; \newline = linebreak
{}%theoreme head spec (can be left empty, meaning `normal')
\newtheoremstyle{italique}
{}%Space above
{}%Space below
{\itshape}%Body font
{}%Indent amount (empty = no indent, \parindent = para indent)
{\bfseries}%theoreme head font
{}%Punctuation after theoreme head
{ }%{\newline}%Space after theoreme head: " " = normal interword space; \newline = linebreak
{}%theoreme head spec (can be left empty, meaning `normal')
\theoremstyle{italique}
\newtheorem{theorem}{Theorem}[section]
\theoremstyle{droit}
\newtheorem{remark}[theorem]{Remark}
\newtheorem{definition}[theorem]{Definition}
\newcommand{\tstar}[5]{% inner radius, outer radius, tips, rot angle, options
\pgfmathsetmacro{\starangle}{360/#3}
\draw[#5] (#4:#1)
\foreach \x in {1,...,#3}
{ -- (#4+\x*\starangle-\starangle/2:#2) -- (#4+\x*\starangle:#1)
}
-- cycle;
}
\newcommand{\setminussign}{{\mathrm r}}
\newcommand{\intersign}{{\mathrm s}}
\newtheorem{mylemma}{Appendix}[section]
\begin{document}

%\begin{frontmatter}

%% Title, authors and addresses

\title{\textbf{Analysis-aware defeaturing: \\problem setting and \textit{a posteriori} estimation}}
%% use the tnoteref command within \title for footnotes;
%% use the tnotetext command for the associated footnote;
%% use the fnref command within \author or \address for footnotes;
%% use the fntext command for the associated footnote;
%% use the corref command within \author for corresponding author footnotes;
%% use the cortext command for the associated footnote;
%% use the ead command for the email address,
%% and the form \ead[url] for the home page:
%%
%% \title{Title\tnoteref{label1}}
%% \tnotetext[label1]{}
%% \author{Name\corref{cor1}\fnref{label2}}
%% \ead{email address}
%% \ead[url]{home page}
%% \fntext[label2]{}
%% \cortext[cor1]{}
%% \address{Address\fnref{label3}}
%% \fntext[label3]{}

%% use optional labels to link authors explicitly to addresses:
%% \author[label1,label2]{<author name>}
%% \address[label1]{<address>}
%% \address[label2]{<address>}

\author{{A. Buffa$^{1,2}$, O. Chanon$^1$, R. V\'azquez$^{1,2}$}\\ \\
	\footnotesize{$^1$ MNS, Institute of Mathematics, \'Ecole Polytechnique F\'ed\'erale de Lausanne, Switzerland}\\
	\footnotesize{$^2$ Istituto di Matematica Applicata e Tecnologie Informatiche `E. Magenes' (CNR), Pavia, Italy}
}
 \date{November 23, 2021}

\maketitle
\vspace{-0.8cm}
\noindent\rule{\linewidth}{0.4pt}
\thispagestyle{fancy}
\begin{abstract}
Defeaturing consists in simplifying geometrical models by removing the geometrical features that are considered not relevant for a given simulation. Feature removal and simplification of computer-aided design models enables faster simulations for engineering analysis problems, and simplifies the meshing problem that is otherwise often unfeasible. 
The effects of defeaturing on the analysis 
are then neglected and, as of today, there are basically very few strategies to quantitatively evaluate such an impact. 
Understanding well the effects of this process is an important step for automatic integration of design and analysis.
We formalize the process of defeaturing by understanding its effect on the solution of {Poisson} equation defined on the geometrical model of interest containing a single feature, with Neumann boundary conditions on the feature itself. 
We derive an \textit{a posteriori} estimator of the energy error between the solutions of the exact and the defeatured geometries in $\mathbb R^n$, $n\in\{2,3\}$, that is simple, {reliable and efficient} up to oscillations. The dependence of the estimator upon the size of the features is explicit.
\end{abstract}

\textit{Keywords:} Geometric defeaturing, \textit{a posteriori} error estimation, isogeometric analysis.

\textit{AMS Subject Classification:} {65N50, 65N30}
%% INTRODUCTION ----------------------------------------------------------------------------------------------------------------------------------------
\section{Introduction} \label{s:intro}
Complex geometrical models are created and processed using computer-aided design tools (CAD) in the context of computer-aided engineering. The automatic integration of design and analysis tools in a single workflow has been an important topic of research for many years \cite{farouki,riesenfield}. One of the methodologies that emerged in the last $15$ years is the one based on isogeometric analysis (IGA) \cite{igabook,igabasis}, a method to solve partial differential equations (PDEs) using smooth B-splines, NURBS or variances thereof as basis functions for the solution field. IGA has proven to be a valid simulation method in a wide range of applications \cite{igaappli}, and a sound mathematical theory \cite{igahref,igaanalysis}, including strategies for adaptive refinement \cite{tsplines,buffagarau,buffagiannelli1,buffagiannelli2}, is now available.

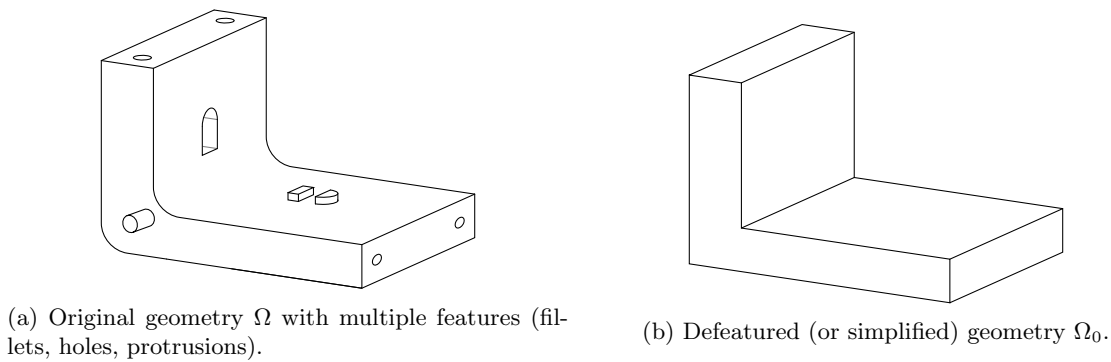
\begin{figure}
	\centering
	\begin{subfigure}{0.45\textwidth}
		\begin{center}
			\begin{tikzpicture}[scale=2,3d view={30}{15},line cap=round,
			declare function={ax=2;ay=1.5;az=1.3;bx=0.4;bz=0.3;corn=0.35;rr=0.1;}]
			\draw (0,ay,az) -- (bx,ay,az)[rounded corners=.35cm] -- (bx,ay,bz) -- (ax,ay,bz); 
			\draw (ax,ay,bz) -- (ax,ay,0) -- (ax,0,0) -- (ax/2,0,0); 
			\draw (bx,0,az) -- (bx,ay,az);
			\draw (0.5,0,0) -- (ax,0,0) -- (ax,0,bz);
			\draw[rounded corners=.35cm] (ax,0,bz) -- (bx,0,bz) -- (bx,0,az);
			\draw (bx,0,az) -- (0,0,az);
			\draw[rounded corners=.35cm] (0,0,az) -- (0,0,0) -- (0.5,0,0);
			\draw (ax,0,bz) -- (ax,ay,bz);
			\draw (0,0,az) -- (0,ay,az);
			
			\draw(bx/2,.2,az)ellipse[x radius=.06,y radius=.06]; 
			\draw(bx/2,ay-.2,az)ellipse[x radius=.06,y radius=.06];	
			
			\begin{scope}[transform canvas={rotate=30}]
			\node [draw, cylinder, shape aspect=0.8, rotate=170, minimum height=1mm, minimum width=1mm,shift={(-2*bx, 0, -bz)}] {};
			\end{scope}
			
			\draw (ax, 0.2, bz/2) [y={(0,1,1)}] circle (0.03); 
			\draw (ax, ay-0.2, bz/2) [y={(0,1,1)}] circle (0.03); 
			
			\draw (ax/2, ay/2, bz+0.05) -- (ax/2+0.08, ay/2, bz+0.05) -- (ax/2+0.08, ay/2+0.2, bz+0.05) -- (ax/2, ay/2+0.2, bz+0.05) -- (ax/2, ay/2, bz+0.05); 
			\draw (ax/2, ay/2, bz+0.05) -- (ax/2, ay/2, bz) -- (ax/2+0.08, ay/2, bz) -- (ax/2+0.08, ay/2+0.2, bz) -- (ax/2+0.08, ay/2+0.2, bz+0.05);
			\draw (ax/2+0.08, ay/2, bz) -- (ax/2+0.08, ay/2, bz+0.05);
			
			\draw (ax/2+0.21, ay/2, bz) -- (ax/2+0.21, ay/2, bz+0.05) -- (ax/2+0.21, ay/2+0.2, bz+0.05);
			\begin{scope}[canvas is xy plane at z=bz+0.05]
			\draw (ax/2+0.21, ay/2+0.2) arc(90:-90:0.1); 
			\end{scope}
			\begin{scope}[canvas is xy plane at z=bz]
			\draw (ax/2+0.285, ay/2+0.168) arc(40:-90:0.1); 
			\end{scope}
			\draw (ax/2+0.21+0.06, ay/2+0.195, bz-0.015) --  (ax/2+0.21+0.06, ay/2+0.195, bz+0.035);
			
			\draw[gray] (bx, ay/2+rr, az/2+rr) -- (bx-0.1, ay/2+rr, az/2+rr);
			\draw[gray] (bx, ay/2+rr, az/2-rr) -- (bx-0.11, ay/2+rr, az/2-rr);
			\draw (bx, ay/2-rr, az/2+rr) -- (bx, ay/2-rr, az/2-rr) -- (bx, ay/2+rr, az/2-rr) -- (bx, ay/2+rr, az/2+rr);
			\begin{scope}[canvas is zy plane at x=bx]
			\draw (az/2+rr, ay/2+rr) arc(90:-90:rr) ; 
			\end{scope}
			\end{tikzpicture}
			\caption{Original geometry $\Omega$ with multiple features (fillets, holes, protrusions).}
		\end{center}
	\end{subfigure}
	~ 
	\begin{subfigure}{0.45\textwidth}
		\begin{center}
			\begin{tikzpicture}[scale=2,3d view={30}{15},line cap=round,
			declare function={ax=2;ay=1.5;az=1.3;bx=0.4;bz=0.3;corn=0.35;}]
			\draw (0,0,0) -- (ax,0,0) -- (ax,0,bz) -- (bx,0,bz) -- (bx,0,az)
			-- (0,0,az) -- cycle 
			(0,0,az) -- (0,ay,az) 
			-- (bx,ay,az) edge ++ (0,-ay,0)
			-- (bx,ay,bz) edge ++ (0,-ay,0) 
			-- (ax,ay,bz) edge ++ (0,-ay,0)
			-- (ax,ay,0) -- (ax,0,0);
			\end{tikzpicture}
			\caption{Defeatured (or simplified) geometry $\Omega_0$.}
		\end{center}
	\end{subfigure}
	\caption{Illustration of defeaturing.} \label{fig:illustrationdefeat}
\end{figure}

However, a major challenge remains in the usability of complex CAD geometries in the analysis phase. While the first CAD models used in IGA were relatively simple geometries defined by multiple patches \cite{multipatch,igabook}, in recent years, more effort is being dedicated to the analysis on complex geometries defined via Boolean operations such as intersections (trimming)\cite{antolinvreps,trimming,trimmingbletzinger} and unions\cite{antolinwei,unionkargaran,unionzuo}. The related engineering literature includes in particular the shell analysis on models with B-reps\cite{bletzinger1,trimmedshells,bletzinger2}, and the finite cell method combined with IGA on complex geometries\cite{rankcomplexgeom,rank2,rank3}. Before even doing any simulation on complex geometries, defining them may already require a very large number of degrees of freedom, that are not necessarily needed --and potentially too costly to be taken into account-- to perform an accurate analysis. Moreover, repeated design changes is part of a typical process in simulation-based design for manufacturing, and it involves adding or removing geometrical features to the design, as well as adjusting geometric parameters in order to meet functionality, manufacturability and aesthetic requirements. Therefore, to be able to consider complex geometries and to accelerate the process of analysis-aware geometric design, it is essential to be able to simplify the geometrical model, process also called defeaturing, while understanding its effect on the solution of the problem in hand. The idea of defeaturing is illustrated in Fig.~\ref{fig:illustrationdefeat}, where we show a complex geometry and its simplified version, with all the features removed.

For a long time, the defeaturing problem has been approached using subjective \textit{a priori} criteria, relying mostly on the engineers' expertise or based on geometrical considerations such as variations in volume or area of the domain\cite{surveymodelsimpl}. More objective criteria have then been considered, still based on some \textit{a priori} knowledge of the mechanical problem at hand such as the verification of constitutive or conservation laws\cite{fineremondinileon2000,rahimi2018}. However, in order to automatize the simulation-based design process, the interest is to have an \textit{a posteriori} criterion to assess the error introduced by defeaturing from the result of the analysis in the defeatured geometric model.
Following this direction, an \textit{a posteriori} criterion is given in Ref.~\cite{ferrandesgiannini2009}: it evaluates an approximation of the energy norm between the exact solution of the problem at hand and the solution on the defeatured geometry. It is intuitively based on the fact that the energy error due to defeaturing is concentrated in the modified boundaries of the geometry, and this boundary error is estimated by solving local problems around each feature. Nevertheless, this approach does not give a demonstrated certification that the proposed criterion is indeed a good estimator of the defeaturing error. 

A different approach is based on the concept of feature sensitivity analysis (FSA)\cite{gopalakrishnansuresh2008,turevsky2008defeaturing}, which relies on topological sensitivity analysis\cite{tsa1,tsa2}, a method used in design optimization that studies the impact of infinitesimal (topological) geometrical changes on the solution of a given PDE. The works on FSA study the defeaturing in geometries with a single arbitrarily-shaped feature. First order changes of quantities of interest are analyzed when a small internal or boundary hole is removed from the geometry. However, besides the underlying assumption of infinitesimal features, this technique cannot be generalized to more complex features. 

An alternative approach, still based on \textit{a posteriori} error estimators, is proposed in Ref.~\cite{ligaomartin2011} for internal holes. The idea behind this estimator is to reformulate the geometrical defeaturing error as a modeling error, by rewriting the PDE solved in two different geometries as two different PDEs on a unique geometry. The modeling error is then estimated using the dual weighted residual method introduced in Ref.~\cite{beckerrannacher} and Ref.~\cite{odenprudhomme}, following the lines of Refs.~\cite{carstensensauter2004,odenvemaganti2000,vegamanti2004} that study heterogeneous and perforated materials, and Ref.~\cite{repinsautersmolianski} that studies the error introduced by the approximation of boundary conditions, two problems that can be easily related to defeaturing. 
This a posteriori approach has then been generalized to different linear and non-linear problems, and to other types of features, in Refs.~\cite{ligao2011,ligaozhang2012,ligaomartin2013,zhanglili2016}. However, some heuristic remains in all these contributions, and a precise mathematical study of the estimator with regards to its efficiency and stability is lacking. In particular, it is assumed that the difference between the solutions of the PDE in the exact and defeatured geometries is small, and it relies on the heuristic estimation of constants that depend on the size of the features, but are not explicit with respect to it. 

Consequently, the first aim of this paper is to give a solid mathematical framework for analysis-based defeaturing, and to precisely define the defeaturing error. We consider geometries that contain a single feature, the case of a geometry with multiple features being the subject of our subsequent work in preparation. We work in the context of {Poisson} equation for which Neumann boundary conditions are imposed on the features, and we introduce an \textit{a posteriori} estimator of the defeaturing error that explicitly depends on the size of the features. 
Our estimator is easy to compute, very cheap, and it is proven to be reliable and efficient {up to oscillations}. Moreover, the considered features are very general, they can either be negative (internal or boundary holes), positive (protrusions), or more complex with both positive and negative components. As mentioned earlier, the estimator is very cheap to compute{: }after the computation of the solution in the defeatured domain, it only requires the solution of a local problem in a simplified feature (as, e.g., its bounding box) if the feature is positive, and the computation of local boundary integrals. 
{Indeed, the proposed estimator is derived from a representation of the defeaturing error that only involves differences between boundary terms on the feature, as already observed in Refs.~\cite{gopalakrishnansuresh2008,turevsky2008defeaturing} and in Refs.~\cite{ligao2011,ligaozhang2012,ligaomartin2013}.}

After introducing the notation used throughout the article in Sec.~\ref{sec:notation}, we precisely define the defeaturing problem in Sec.~\ref{sec:defeatpb} in the simpler setting in which the feature is either negative or positive. Then, in Sec.~\ref{sec:negativeest}, the defeaturing error estimator is derived and analyzed in the case in which the geometry contains a negative feature. In Sec.~\ref{sec:complexfeat}, the defeaturing problem and error estimator are generalized to the case of a geometry with a complex feature, required by complex geometric models. The study of the defeaturing problem when the feature is positive can be deduced from Sec.~\ref{sec:complexfeat} as a special case. 
Subsequently, in Sec.~\ref{sec:numexp}, we present a validation of the results presented previously. Our validation is obtained by comparing errors and defeaturing estimators for numerical solutions on very fine meshes. We finally draw conclusions in Sec.~\ref{sec:ccl}, and some mathematical results used throughout the paper are stated and proven in Appendix \ref{sec:appendix}. 

\section{Notation} \label{sec:notation}
We start by introducing the notation that will be used throughout the paper. Let $n=2$ or $n=3$, let $\omega$ be any open $k$-dimensional manifold in $\mathbb{R}^n$, $k\leq n$, and let $\varphi\subset \partial \omega$. 

We denote by $|\omega|$, $\overline{\omega}$, $\mathrm{int}(\omega)$ and $\text{conn}(\omega)$, respectively, the measure of $\omega$, its closure, its interior, and the set of its connected components. We also denote by $\mathrm{diam}(\varphi)$ and $\mathrm{hull}(\varphi)$, respectively, the manifold diameter and convex hull. More precisely, if $\varphi$ is connected, we let
$\mathrm{diam}(\varphi):= \max_{\xi, \eta \in \varphi} \rho(\xi, \eta)$, where $\rho(\xi, \eta)$ is the infimum of lengths of continuous piecewise $C^1$-paths between $\xi$ and $\eta$ in $\varphi$. If $\varphi$ is not connected, by abuse of notation, we denote by $\mathrm{diam}(\varphi)$ the diameter of $\mathrm{hull}(\varphi)$, where $\mathrm{hull}(\varphi)$ is the smallest geodesically convex subset of $\partial \omega$ containing $\varphi$, that is, given any two points in $\mathrm{hull}(\varphi)$, there is a unique minimizing geodesic contained within $\mathrm{hull}(\varphi)$ that joins those two points.

Moreover, for any function $z$ defined on $\omega$, we denote $\overline{z}^\omega$ its average over $\omega$. Furthermore, for $1\leq p \leq \infty$, let $\|\cdot\|_{L^p(\omega)}$ be the norm in $L^p(\omega)$, and let $H^s(\omega)$ denote the Sobolev space of order $s\in\mathbb R$ whose classical norm and semi-norm are written $\|\cdot\|_{s,\omega}$ and $|\cdot|_{s,\omega}$, respectively. We recall from Ref.~\cite[Definition~1.3.2.1]{grisvard}, that for all $z\in H^s(\omega)$ with $\theta := s-\lfloor s \rfloor$, 
$$\|z\|^2_{s,\omega} :=\|z\|^2_{\lfloor s\rfloor,\omega} + |z|^2_{\theta,\omega}; \qquad |z|^2_{\theta,\omega} := \int_{\omega}\int_{\omega}\frac{\big(z(x)-z(y)\big)^2}{|x-y|^{k+2\theta}}\,\mathrm{d}x\,\mathrm{d}y.$$
We also write $L^2(\omega):=H^0(\omega)$ so that the norm in $L^2(\omega)$ will be written $\|\cdot\|_{0,\omega}$. And to deal with boundary conditions, for $z\in H^\frac{1}{2}(\varphi)$, we denote $$H^1_{z,\varphi}(\omega) := \left\{y\in H^1(\omega): \mathrm{tr}_\varphi(y) = z \right\},$$ where $\mathrm{tr}_\varphi(y)$ denotes the trace of $y$ on $\varphi\subset\partial \omega$. 
Moreover, we consider the Sobolev space $$H^{\frac{1}{2}}_{00}(\varphi):=\left\{z\in L^2(\varphi) : z^\star\in H^\frac{1}{2}(\partial\omega)\right\},$$ where $z^\star$ is the extension of $z$ by $0$ on $\partial \omega$, with its norm and semi-norm that we respectively denote $\|\cdot\|_{H^{1/2}_{00}(\varphi)}$ and $|\cdot|_{H^{1/2}_{00}(\varphi)}$, where
$$\|z\|_{H_{00}^{1/2}(\varphi)}^2 := \|z\|_{\frac{1}{2}, \varphi}^2 + |z|_{H_{00}^{1/2}(\varphi)}^2 \quad \text{ and }\quad |z|^2_{H_{00}^{1/2}(\varphi)} := \int_\varphi \int_{\partial \omega\setminus \varphi} \frac{z^2(s)}{|s-t|^{n}} \,\mathrm dt \,\mathrm ds.$$
We recall from Ref.~\cite[Lemma~1.3.2.6]{grisvard}, that there are two constants $C\geq c>0$ (independent from the measure of $\varphi$) such that for all $z\in H_{00}^\frac{1}{2}\left( \varphi\right)$, 
\begin{align*}
c|z|_{H_{00}^{1/2}(\varphi)}^2 \leq \int_{\varphi} \frac{z^2(s)}{\text{dist}\left(s, \partial\varphi\right)} \,\mathrm ds \leq C |z|_{H_{00}^{1/2}(\varphi)}^2,
\end{align*} 
and from Ref.~\cite[Eq.~(1,3,2,7)]{grisvard}, $\|z\|_{H_{00}^{1/2}(\varphi)} = \left\|z^\star\right\|_{\frac{1}{2},\partial\omega}$. In particular, we have $|z|^2_{\frac{1}{2},\varphi} + |z|^2_{H_{00}^{1/2}(\varphi)} = \left|z^\star\right|^2_{\frac{1}{2},\partial\omega}$. When $\varphi$ is not a connected set, then we define
$$H_{00}^\frac{1}{2}(\varphi) := \left\{ z\in L^2(\varphi): z|_{\varphi_c}\in H_{00}^\frac{1}{2}(\varphi_c), \forall \varphi_c\in \text{conn}(\varphi) \right\},$$
and we equip this space with the norm $$\|\cdot\|_{H_{00}^{1/2}(\varphi)} := \left( \displaystyle\sum_{\varphi_c\in \text{conn}(\varphi)} \|\cdot\|_{H_{00}^{1/2}(\varphi_c)}^2 \right)^\frac{1}{2}.$$ 
Next, let $H^{-\frac{1}{2}}_{00}(\varphi)$ be the dual space of $H^{\frac{1}{2}}_{00}(\varphi)$ equipped with the dual norm written $\|\cdot\|_{H^{-1/2}_{00}(\varphi)}$. 
Furthermore, for $m\in\mathbb{N}$, let $\mathbb{Q}_m(\omega)$ be the set of polynomials on $\omega$ of degree at most $m$ on each variable.
And if $\left\{\omega_\ell\right\}_{\ell=1}^L$ is a given partition of $\omega$ such that each $\omega_\ell$ is a flat element, that is, a straight line if $k=1$ or a flat square or triangle if $k=2$, let $\mathbb{Q}_{m,0}^\mathrm{pw}(\omega)$ be the set of continuous functions $q$ such that $q\vert_{\partial \omega} \equiv 0$, $q\vert_{\omega_\ell}\in \mathbb Q_m\left(\omega_\ell\right)$ for all $\ell=1,\ldots,L$. 

Finally, we will need the following assumptions on different domains, {where the symbol $\lesssim$ is used to mean any inequality which does not depend on the size of considered domains, but which can depend on their shape.}
\begin{definition} \label{as:isotropy} We say that $\omega$ is \emph{isotropic} if $$\mathrm{diam}\left(\omega\right) \lesssim \displaystyle\max_{\omega_c\in\text{conn}(\omega)}\big(\mathrm{diam}(\omega_c)\big),$$ and if each connected component $\omega_c$ of $\omega$ is isotropic, that is if diam$\left(\omega_c\right)^k \lesssim \left|\omega_c\right|$, for all $\omega_c\in\text{conn}(\omega)$. 
\end{definition}
\begin{definition} \label{as:pwsmoothshapereg} We say that $\omega$ is \emph{regular} if $\omega$ is piecewise shape regular and composed of flat elements, that is, if there is $L_\omega\in\mathbb{N}$ such that $\omega = \text{int}\left(\displaystyle\bigcup_{\ell=1}^{L_\omega} \overline{\omega_\ell}\right)$, where for all $\ell,m=1,\ldots,L_\omega$, $\omega_\ell \cap \omega_m = \emptyset$, $|\omega|\lesssim \left|\omega_\ell\right|$ and $\omega_\ell$ is flat (for instance, if $k=1$, it is a straight line).
\end{definition}
If $\omega$ is regular, for all $m\in \mathbb{N}$, we define $\Pi_{{m},\omega} : L^2(\omega) \rightarrow \mathbb{Q}_{m,0}^\text{pw}(\omega)$ as the extension of the Cl\'ement operator\cite{clement} developed in Ref.~\cite{bernardigirault} on $\omega$. 

\section{Defeaturing model problem} \label{sec:defeatpb}
Let us consider a given open Lipschitz domain $\Omega\subset \mathbb{R}^n$ that can potentially be complex: in this paper, we assume that it contains a feature $F$, that is, one geometrical detail of smaller scale. There exist three kinds of such geometrical features: a feature $F\subset \mathbb{R}^n$ is said to be
\begin{itemize}
	\item negative if $\left(\overline{F}\cap\overline{\Omega}\right) \subset \partial \Omega$;
	\item positive if $F\subset \Omega$;
	\item complex if it is composed of both negative and positive components. 
\end{itemize}
A positive feature corresponds to the addition of some material, a negative feature corresponds to a part where some material has been removed, and a feature is complex in the most general situation that corresponds to both the addition and the removal of some material. 

In this section, we suppose that $F$ is either negative or positive, and the considered defeaturing problem is stated, together with the notation for the problem that will be used throughout the article. The general situation in which $F$ is a complex feature will then be studied in Sec.~\ref{sec:complexfeat}. \\

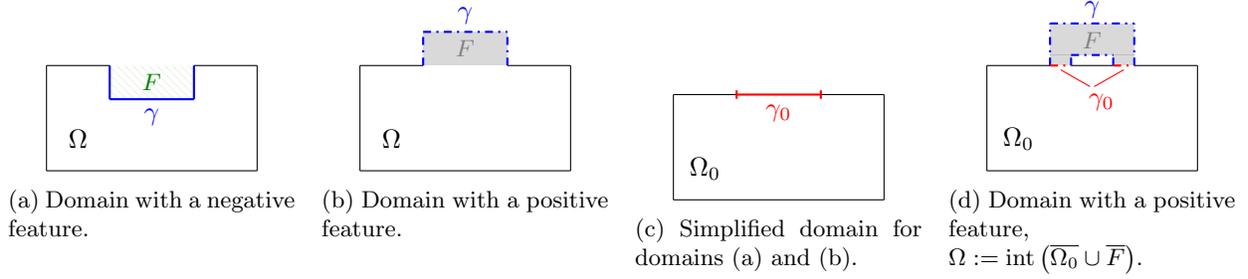
\begin{figure}
	\centering
	\begin{subfigure}[b]{0.23\textwidth}
		\begin{center}
			\begin{tikzpicture}[scale=2.8]
			\fill[pattern=north west lines, pattern color=green!50!black, opacity=0.3] (2.3,0.84) rectangle (2.7,1.00);
			\draw (2,0.5) -- (3,0.5) ;
			\draw (2,0.5) -- (2,1) ;
			\draw (3,0.5) -- (3,1) ;
			\draw (2,1) -- (2.3,1) ;
			\draw (2.7,1) -- (3,1) ;
			\draw[blue,thick] (2.3,1) -- (2.3,0.84) ;
			\draw[blue,thick] (2.7,1) -- (2.7,0.84) ;
			\draw[blue,thick] (2.3,0.84) -- (2.7,0.84) ;
			\draw (2.15,0.65) node{$\Omega$} ;
			\draw[green!50!black] (2.5,0.92)node{\small $F$};
			\draw[blue,thick] (2.5,0.84) node[below]{$\gamma$} ;
			\end{tikzpicture}
			\caption{Domain with a negative feature.\\}
			\label{fig:ex1a}
		\end{center}
	\end{subfigure}
	~
	\begin{subfigure}[b]{0.23\textwidth}
		\begin{center}
			\begin{tikzpicture}[scale=2.8]
			\fill[gray, fill, opacity=0.3] (0.3,1) rectangle (0.7,1.16);
			\draw (0,0.5) -- (1,0.5) ;
			\draw (0,0.5) -- (0,1) ;
			\draw (1,0.5) -- (1,1) ;
			\draw (0,1) -- (0.3,1) ;
			\draw (0.7,1) -- (1,1) ;
			\draw[blue,thick,dash dot] (0.3,1) -- (0.3,1.16) ;
			\draw[blue,thick,dash dot] (0.7,1) -- (0.7,1.16) ;
			\draw[blue,thick] (0.5,1.15) node[above]{$\gamma$} ;
			\draw[blue,thick,dash dot] (0.3,1.16) -- (0.7,1.16) ;
			\draw (0.15,0.65) node{$\Omega$} ;
			\draw[gray] (0.5,1.08)node{\small $F$};
			\end{tikzpicture}
			\caption{Domain with a positive feature.\\}
			\label{fig:ex1c}
		\end{center}
	\end{subfigure}
	~
	\begin{subfigure}[b]{0.23\textwidth}
		\begin{center}
			\begin{tikzpicture}[scale=2.8]
			\draw (0,0.5) -- (1,0.5) ;
			\draw (0,0.5) -- (0,1) ;
			\draw (1,0.5) -- (1,1) ;
			\draw (0,1) -- (0.3,1) ;
			\draw (0.7,1) -- (1,1) ;
			\draw[red,thick] (0.3,1) -- (0.7,1) ;
			\draw[red,thick] (0.3,0.98) -- (0.3, 1.02); 
			\draw[red,thick] (0.7,0.98) -- (0.7, 1.02); 
			\draw[red,thick] (0.5,1) node[below]{$\gamma_0$} ;
			\draw (0.15,0.65) node{$\Omega_0$} ;
			\end{tikzpicture}
			\caption{Simplified domain for domains (a) and (b).}
			\label{fig:ex1b}
		\end{center}
	\end{subfigure}
	~
	\begin{subfigure}[b]{0.23\textwidth}
		\begin{center}
			\begin{tikzpicture}[scale=2.8]
			\fill[gray, fill, opacity=0.3] (0.3,1.05) rectangle (0.7,1.2);
			\fill[gray, fill, opacity=0.3] (0.3,1) rectangle (0.4,1.05);
			\fill[gray, fill, opacity=0.3] (0.6,1) rectangle (0.7,1.05);
			\draw (0,0.5) -- (1,0.5) ;
			\draw (0,0.5) -- (0,1) ;
			\draw (1,0.5) -- (1,1) ;
			\draw (0,1) -- (0.3,1) ;
			\draw (0.7,1) -- (1,1) ;
			\draw (0.4,1) -- (0.6,1);
			\draw[blue,thick,dash dot] (0.3,1) -- (0.3, 1.2); 
			\draw[blue,thick,dash dot] (0.3,1.2) -- (0.7, 1.2); 
			\draw[blue,thick,dash dot] (0.7,1) -- (0.7, 1.2); 
			\draw[blue,thick,dash dot] (0.4,1) -- (0.4, 1.05); 
			\draw[blue,thick,dash dot] (0.4,1.05) -- (0.6, 1.05); 
			\draw[blue,thick,dash dot] (0.6,1) -- (0.6, 1.05); 
			\draw[blue,thick] (0.5,1.18) node[above]{$\gamma$} ;
			\draw (0.15,0.65) node{$\Omega_0$} ;
			\draw[gray] (0.5,1.12)node{\small $F$};
			\draw[red,thick,dash dot] (0.6,1) -- (0.7,1) ;
			\draw[red,thick,dash dot] (0.3,1) -- (0.4,1) ;
			\draw[red] (0.49,0.9) -- (0.35,0.98);
			\draw[red] (0.51,0.9) -- (0.65,0.98);
			\draw[red] (0.545,0.91) node[below]{$\gamma_0$} ;
			\end{tikzpicture}
			\caption{Domain with a positive feature,\\$\Omega := \text{int}\left(\overline{\Omega_0} \cup \overline{F}\right)$.}
			\label{fig:ex1h}
		\end{center}
	\end{subfigure}
	\caption{Different examples of geometries with a negative or a positive feature.} \label{fig:exfeat}
\end{figure}

So for now, let $F$ be an open Lipschitz feature which is either positive or negative, as in Fig.~\ref{fig:exfeat}. Then, let $\Omega_0\subset \mathbb R^n$ be the defeatured (or simplified) geometry, that is
\begin{itemize}
	\item if $F$ is negative, ${\Omega_0} := \mathrm{int}\left(\overline\Omega \cup \overline F\right)$;
	\item if $F$ is positive, $\Omega_0 := \Omega \setminus \overline F$,
\end{itemize}
and we also assume that $\Omega_0$ is an open Lipschitz domain. {In other words, if the feature $F$ is negative, then the exact domain $\Omega$ is embedded in the defeatured domain $\Omega_0$; if $F$ is positive instead, the exact domain $\Omega$ is the union of the defeatured domain $\Omega_0$ and the feature $F$.}

Let $\mathbf{n}$, $\mathbf{n}_0$ and $\mathbf{n}_F$ be the unitary outward normals of $\Omega$, $\Omega_0$ and $F$ respectively. Let $\partial \Omega = \overline{\Gamma_D} \cup \overline{\Gamma_N}$, ${\Gamma}_D \cap {\Gamma}_N = \emptyset$ {with $\Gamma_D \neq \emptyset$}, and we assume that ${\Gamma_D} \cap \partial F = \emptyset$. Finally, let $\gamma_0 := \partial F \setminus \overline{\Gamma_N}\subset \partial \Omega_0$ and $\gamma:=\partial F \setminus \overline{\gamma_0}\subset \partial \Omega$ so that $\partial F = \overline{\gamma} \cup \overline{\gamma_0}$ and $\gamma\cap\gamma_0 = \emptyset$ (see Fig.~\ref{fig:exfeat}). 
In particular, note that an internal feature $F$ is a negative feature for which $\gamma = \partial F$ and $\gamma_0 = \emptyset$. In the following, the defeaturing problem is stated, and the cases in which $F$ is either positive or negative are treated separately. \\

Let $h\in H^{\frac{3}{2}}(\Gamma_D)$, $g\in H^{\frac{1}{2}}(\Gamma_N)$ and $f\in L^2\left(\Omega\right)$. The considered problem is Poisson equation on the exact geometry $\Omega$: find $u\in H^1(\Omega)$, the weak solution of 
\begin{align}\label{eq:originalpb}
\begin{cases}
-\Delta u = f &\text{ in } \Omega \\
u = h &\text{ on } \Gamma_D \\
\displaystyle\frac{\partial u}{\partial \mathbf{n}} = g &\text{ on } \Gamma_N,\vspace{0.1cm}
\end{cases}
\end{align}
that is, $u\in H^1_{h,\Gamma_D}(\Omega)$ satisfies for all $v\in H^1_{0,\Gamma_D}(\Omega)$, 
\begin{equation} \label{eq:weakoriginalpb}
\int_\Omega \nabla u \cdot \nabla v \,\mathrm dx = \int_\Omega fv \,\mathrm dx + \int_{\Gamma_N} g v \,\mathrm ds.
\end{equation}

If feature $F$ is negative, consider any $L^2$-extension of $f\in L^2(\Omega)$ in $F$, that we still write $f\in L^2(\Omega_0)$ by abuse of notation. Note that such an extension is not needed for a positive feature. Then, instead of (\ref{eq:originalpb}), the following defeatured (or simplified) problem is solved: given $g_{0}\in H^{\frac{1}{2}}(\gamma_0)$, find $u_0\in H^1(\Omega_0)$, the weak solution of 
\begin{align} \label{eq:simplpb}
\begin{cases}
-\Delta u_0 = f &\text{ in } \Omega_0 \\
u_0 = h &\text{ on } \Gamma_D \\ \vspace{1mm}
\displaystyle\frac{\partial u_0}{\partial \mathbf{n}_0}  = g &\text{ on } \Gamma_N\setminus {\gamma}\\ 
\displaystyle\frac{\partial u_0}{\partial \mathbf{n}_0}  = g_{0} &\text{ on } \gamma_0,\vspace{0.1cm}
\end{cases}
\end{align}
that is, $u_0\in H^1_{h,\Gamma_D}(\Omega_0)$ satisfies for all $v\in H^1_{0,\Gamma_D}(\Omega_0)$, 
\begin{equation} \label{eq:weaksimplpb}
\int_{\Omega_0} \nabla u_0 \cdot \nabla v \,\mathrm dx = \int_{\Omega_0} fv \,\mathrm dx + \int_{\Gamma_N\setminus \gamma} g v \,\mathrm ds + \int_{\gamma_0} g_{0} v \,\mathrm ds.
\end{equation}
We are interested in controlling the energy norm of the defeaturing error, which we suitably define in what follows. \\

\textit{Negative feature case:} since $\Omega \subset \Omega_0$ in this case, we consider the restriction of $u_0$ to $\Omega$. Then we define the defeaturing error as $\big|u-u_0|_{\Omega}\big|_{1,\Omega}$. In this setting, we suppose that $\gamma$ is isotropic according to Definition \ref{as:isotropy}, where the diameter and the convex hull of $\gamma$ are considered in the manifold $\partial \Omega$ (see Sec.~\ref{sec:notation}). \\

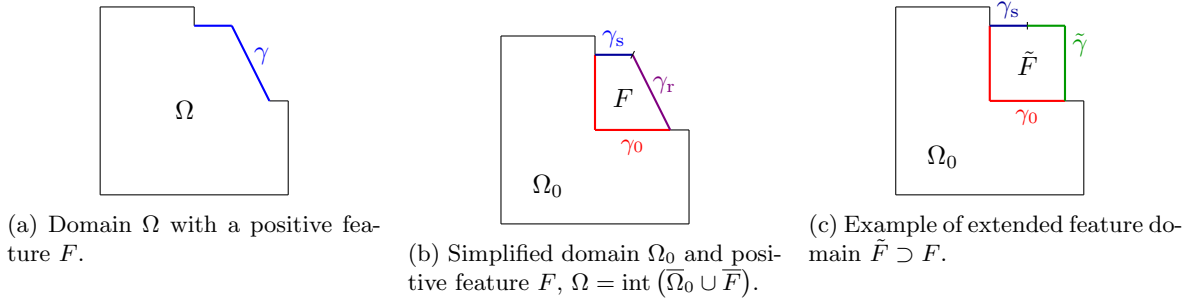
\begin{figure}[b]
	\centering
	\begin{subfigure}[b]{0.3\textwidth}
		\begin{center}
			\begin{tikzpicture}[scale=2.5]
			\draw (1,0) -- (2,0) ;
			\draw (1,0) -- (1,1) ;
			\draw (2,0) -- (2,0.5) ;
			\draw (1,1) -- (1.5,1) ;
			\draw (1.9,0.5) -- (2,0.5);
			\draw (1.5,0.9) -- (1.5,1);
			\draw[blue,thick] (1.5,0.9) -- (1.7,0.9);
			\draw[blue,thick] (1.7,0.9) -- (1.9,0.5);
			\draw[blue] (1.86,0.74) node{$\gamma$} ;
			\draw (1.45,0.45) node{$\Omega$} ;
			\end{tikzpicture}
			\caption{Domain $\Omega$ with a positive feature $F$.\\}
			\label{fig:badFfilletexgeom}
		\end{center}
	\end{subfigure}
	~
	\begin{subfigure}[b]{0.3\textwidth}
		\begin{center}
			\begin{tikzpicture}[scale=2.5]
			\draw (1,0) -- (2,0) ;
			\draw (1,0) -- (1,1) ;
			\draw (2,0) -- (2,0.5) ;
			\draw (1,1) -- (1.5,1) ;
			\draw[red,thick] (1.5,0.5) -- (1.9,0.5);
			\draw (1.9,0.5) -- (2,0.5);
			\draw (1.5,0.9) -- (1.5,1);
			\draw[red,thick] (1.5,0.5) -- (1.5,0.9);
			\draw[red] (1.7,0.5) node[below]{\small$\gamma_0$} ;
			\draw[blue!60!black!,thick] (1.5,0.9) -- (1.7,0.9);
			\draw[blue!50!red,thick] (1.7,0.9) -- (1.9,0.5);
			\draw[blue!50!red] (1.87,0.74) node{$\gamma_\setminussign$} ;
			\draw[blue] (1.6,0.89) node[above]{$\gamma_\intersign$} ;
			\draw (1.25,0.2) node{$\Omega_0$} ;
			\draw (1.65,0.67) node{$F$} ;
			\draw (1.71,0.92)--(1.69,0.88);
			\end{tikzpicture}
			\caption{Simplified domain $\Omega_0$ and positive feature $F$, $\Omega = \mathrm{int}\left(\overline\Omega_0 \cup \overline F\right)$.}
			\label{fig:badFfillet}
		\end{center}
	\end{subfigure}
	~
	\begin{subfigure}[b]{0.3\textwidth}
		\begin{center}
			\begin{tikzpicture}[scale=2.5]
			\draw (1,0) -- (2,0) ;
			\draw (1,0) -- (1,1) ;
			\draw (2,0) -- (2,0.5) ;
			\draw (1,1) -- (1.5,1) ;
			\draw[red,thick] (1.5,0.5) -- (1.9,0.5);
			\draw (1.9,0.5) -- (2,0.5);
			\draw (1.5,0.9) -- (1.5,1);
			\draw[red,thick] (1.5,0.5) -- (1.5,0.9);
			\draw[red] (1.7,0.5) node[below]{\small$\gamma_0$} ;
			\draw (1.25,0.2) node{$\Omega_0$} ;
			\draw[blue!60!black!,thick] (1.7,0.9) -- (1.5,0.9);
			\draw[blue!60!black!] (1.6,0.89) node[above]{$\gamma_\intersign$} ;
			\draw (1.7,0.92)--(1.7,0.88);
			\draw[green!60!black!,thick] (1.7,0.9) -- (1.9,0.9);
			\draw[green!60!black!,thick] (1.9,0.5) -- (1.9,0.9);
			\draw (1.7,0.7) node{$\tilde{F}$};
			\draw[green!60!black!] (1.89,0.8) node[above,right]{$\tilde{\gamma}$};
			\end{tikzpicture}
			\caption{Example of extended feature domain $\tilde{F}\supset F$.\\}
			\label{fig:badFfilletext}
		\end{center}
	\end{subfigure}
	\caption{Example of geometry with a positive feature.} \label{fig:badfilletex}
\end{figure}

\textit{Positive feature case:} the solution $u_0$ {is only defined in the defeatured geometry $\Omega_0$ which does not contain the feature $F$, since $F \subset \Omega$ but $F \not\subset \Omega_0$. That is, the solution $u_0$ is not defined everywhere on the exact geometry $\Omega= \text{int}\left(\overline{\Omega_0}\cup\overline{F}\right)$.} Therefore, to define the defeaturing error, {and since $\Omega$ is the union of $\Omega_0$ and $F$, one needs to solve a problem to extend $u_0$ to $F$}. The most natural extension would be the solution of
\begin{align} \label{eq:featurepbonF}
\begin{cases}
-\Delta \tilde{u}_0 = f &\text{ in } F \\
\tilde{u}_0 = u_0 & \text{ on } \gamma_{0} \\
\displaystyle\frac{\partial \tilde u_0}{\partial \mathbf{n}_F} = g & \text{ on } \gamma.\vspace{0.1cm}
\end{cases}
\end{align}

\noindent However, $F$ may be complex or even non-smooth (see the examples in Sec.~\ref{sec:nonlipschitz}), thus finding or computing the solution of (\ref{eq:featurepbonF}) may be cumbersome.
Therefore, let $\tilde{F}\subset \mathbb{R}^n$ be a Lipschitz domain that contains $F$ and such that $\gamma_{0} \subset \left(\partial \tilde{F} \cap \partial F\right)$, that is, $\tilde F$ is a suitable (simple) domain extension of $F$ such as the bounding box of $F$ for example. Note that it is possible to have $\tilde F \cap \Omega_0 \neq \emptyset$, but we also assume that $\tilde F \setminus \overline{F}$ is Lipschitz. Thus if we consider any $L^2$-extension of $f$ in $\tilde F$, that we still write $f$ by abuse of notation, then we can solve an extension problem in $\tilde F$ instead of $F$. {Note that one can look at $\tilde F$ as the defeatured geometry of the positive feature $F$, that is, as a geometry simplified from the exact geometry $F$, for which $\tilde F\setminus \overline F$ is a negative feature.}

This is illustrated in Fig.~\ref{fig:badfilletex}: instead of solving the extension problem (\ref{eq:featurepbonF}) in $F$, we can choose to solve an extension problem in $\tilde{F}$, a simpler domain which shares $\gamma_0$ as a boundary. Let $\tilde{\mathbf{n}}$ be the unitary outward normal of $\tilde F$, let $\tilde \gamma := \partial \tilde F \setminus \partial F$, and let $\gamma$ be decomposed as $\gamma = \text{int}(\overline{\gamma_\intersign}\cup\overline{\gamma_\setminussign})$, where $\gamma_\intersign$ and $\gamma_\setminussign$ are open, $\gamma_\intersign$ is the part of $\gamma$ that is shared with $\partial \tilde F$ while $\gamma_\setminussign$ is the remaining part of $\gamma$, that is, the part that does not belong to $\partial \tilde F$. 
Note that $\gamma_0$ and $\tilde \gamma$ are ``simple" boundaries since they are the boundaries of the chosen simplified geometry $\Omega_0$ and of the chosen extended feature domain $\tilde F$, respectively. 

Therefore, let us consider the following extension of the solution $u_0$ of (\ref{eq:simplpb}) on $\tilde F$: given $\tilde g\in H^{\frac{1}{2}}(\tilde \gamma)$, find $\tilde{u}_0\in H^1\left(\tilde{F}\right)$, the weak solution of 
\begin{align} \label{eq:featurepb}
\begin{cases}
-\Delta \tilde{u}_0 = f &\text{ in } \tilde{F} \\ \vspace{1mm}
\tilde{u}_0 = u_0 & \text{ on } \gamma_{0} \\ \vspace{1mm}
\displaystyle\frac{\partial \tilde u_0}{\partial \tilde{\mathbf{n}}}  = \tilde g & \text{ on } \tilde \gamma \\
\displaystyle\frac{\partial \tilde u_0}{\partial \tilde{\mathbf{n}}}  = g & \text{ on } \gamma_\intersign,
\end{cases}
\end{align}
that is, $\tilde{u}_0\in H^1_{u_0,\gamma_{0}}\left(\tilde F\right)$ satisfies for all $v\in H^1_{0,\gamma_{0}}\left(\tilde{F}\right)$, 
\begin{equation*} 
\int_{\tilde{F}} \nabla \tilde{u}_0 \cdot \nabla v \,\mathrm dx = \int_{\tilde{F}} fv \,\mathrm dx + \int_{\tilde \gamma} \tilde{g} v \,\mathrm ds + \int_{\gamma_\intersign} {g} v \,\mathrm ds.
\end{equation*}
Let $u_\mathrm d\in H^1_{h,\Gamma_D}\left(\Omega\right)$ be the extended defeatured solution, that is,
\begin{equation*}\label{eq:defud}
u_\mathrm d = u_0 \text{ in } \Omega_0 \quad \text{ and } \quad u_\mathrm d = \tilde{u}_0\vert_{F_\mathrm p} \text{ in } F_\mathrm p.
\end{equation*}
Then we define the defeaturing error as $\left|u-u_\mathrm d\right|_{1,\Omega}$. 

In this setting, we suppose that $\gamma_{0}$ and $\gamma_\setminussign$ are isotropic according to Definition \ref{as:isotropy}, where the diameter and the convex hull $\gamma_0$ are considered in the manifold $\partial \Omega_0$, and the diameter and the convex hull of $\gamma_\setminussign$ are considered in the manifold $\partial F$. 

\begin{remark}
	The problem is studied in the case in which all domains are Lipschitz, and under the isotropy conditions stated above. A finer analysis could be performed to take into account more general geometries, such as the non-Lipschitz fillet of Sec.~\ref{sec:nonlipschitz}, but this goes beyond the scope of this paper. Moreover, when used, the regularity condition defined in Definition \ref{as:pwsmoothshapereg} is taken for the sake of simplicity, but it can be relaxed by considering $\omega$ piecewise smooth and shape regular instead.
\end{remark}

Note that the boundaries $\gamma$, $\gamma_{0}$ and $\gamma_\setminussign$ can be non-connected sub-manifolds (see Fig.~\ref{fig:ex1h}). In the remaining part of this article, the symbol $\lesssim$ will be used to mean any inequality which does not depend on the size of the feature $F$ nor on the size of the positive extension $\tilde F$, but which can depend on their shape. Moreover, we will write $A \simeq B$ whenever $A\lesssim B$ and $B\lesssim A$. 

\section{Negative feature \textit{a posteriori }defeaturing error estimator} \label{sec:negativeest}
In this section, an optimal \textit{a posteriori} defeaturing error estimator is derived in the simplest setting of a negative feature. We show that the derived estimator is an upper bound and a lower bound (up to oscillations) of the energy norm of the defeaturing error. The key issue in the subsequent analysis is to track the dependence of all constants from the size of the feature. Although it would be possible to present the equivalent analysis for a positive feature, we have decided to omit it and to let the positive feature case be a consequence of the more general case of a complex feature, whose dedicated analysis is presented in Sec.~\ref{sec:complexfeat}. \\

Let $F$ be a negative feature of $\Omega$, and suppose that $\gamma$ is isotropic according to Definition \ref{as:isotropy}. Then, let 
\begin{align}
d_\gamma := g+\frac{\partial u_0}{\partial \mathbf n_F} \text{ on } \gamma \label{eq:dgamma}
\end{align}
{be the error term on the Neumann data on $\gamma$}, and we define the defeaturing error estimator as 
\begin{equation}\label{eq:negestimator}
\mathcal E_{\mathrm n}(u_0) := \left( |\gamma|^{\frac{1}{n-1}} \left\| d_\gamma - \overline{d_\gamma}^\gamma \right\|^2_{0,\gamma} + c_{\gamma}^2 |\gamma|^\frac{n}{n-1}\left| \overline{d_\gamma}^\gamma \right|^2 \, \right)^\frac{1}{2},
\end{equation}
where, if we define $\eta\in\mathbb{R}$ as the unique solution of $\eta = -\log(\eta)$, 
\begin{align} \label{eq:negconstant}
c_{\gamma} := \begin{cases}
\max\big(\hspace{-0.05cm}\left|\log\left(|\gamma|\right)\right|, \eta \big)^\frac{1}{2} & \text{ if } n = 2 \\
1 & \text{ if } n = 3. 
\end{cases}
\end{align}

We first show that the quantity $\mathcal{E}_{\mathrm n}(u_0)$ is a reliable estimator for the defeaturing error, i.e., it is an upper bound for the defeaturing error (see Theorem \ref{thm:upperbound}). Then, assuming that $\gamma$ is also regular according to Definition \ref{as:pwsmoothshapereg}, and under mild assumptions for the two-dimensional case, we show that it is also efficient (up to oscillations), i.e., it is a lower bound for the defeaturing error up to oscillations (see Theorem \ref{thm:lowerbound}). This means that the whole information on the error introduced by defeaturing a negative feature, in energy norm, is contained in the boundary $\gamma$, and can be accounted by suitably evaluating the error made on the normal derivative of the solution. 

\begin{remark} \label{rmk:compatcondneg}
	Consider the simplified problem (\ref{eq:simplpb}) restricted to $F$ with the natural Neumann boundary condition on $\gamma$, that is, $u_0\vert_{F}\in H^1(F)$ satisfies
	\begin{align*}
	\begin{cases}
	-\Delta \left(u_0\vert_{F}\right) = f &\text{ in } F \\ \vspace{1mm}
	\displaystyle\frac{\partial \left(u_0\vert_{F}\right)}{\partial \mathbf{n}_0}  = g_0 &\text{ on } \gamma_0 \\ 
	\displaystyle\frac{\partial \left(u_0\vert_{F}\right)}{\partial \mathbf{n}_F}  = \displaystyle\frac{\partial u_0}{\partial \mathbf{n}_F} &\text{ on } \gamma.
	\end{cases}
	\end{align*}
	By abuse of notation, we omit the explicit restriction of $u_0$ to $F$.  Then if we multiply the restricted problem by the constant function $1$ and integrate by parts, we obtain
	$$\int_F f\,\mathrm dx + \int_{\gamma_0} g_0\,\mathrm ds + \int_\gamma \frac{\partial u_0}{\partial \mathbf n_F} = 0.$$
	Consequently, 
	\begin{equation*} 
	\overline{d_\gamma}^\gamma = \overline{\left(g+\frac{\partial u_0}{\partial \mathbf n_F}\right)}^\gamma = \frac{1}{|\gamma|}\left( \int_\gamma g\,\mathrm ds - \int_{\gamma_0} g_0\,\mathrm ds - \int_F f\,\mathrm dx\right).
	\end{equation*}
	Therefore, the second term of the estimator $\mathcal{E}_\mathrm n(u_0)$ in (\ref{eq:negestimator}) only depends on the defeatured problem data, and more precisely on the choice of $g_0$ that one considers on $\gamma_0$, and on the choice of the extension of $f$ that one considers in the feature $F$. 
	As a consequence, if the second term of the estimator (\ref{eq:negestimator}) dominates, this means that the defeatured problem data should be better chosen. 
	Moreover, under the following reasonable flux conservation assumption
	\begin{equation} \label{eq:compatcondneg}
	\int_\gamma g\,\mathrm ds - \int_{\gamma_0} g_0\,\mathrm ds - \int_F f\,\mathrm dx = 0,
	\end{equation}
	the defeaturing error estimator (\ref{eq:negestimator}) rewrites
	$$\mathcal E_{\mathrm n}(u_0) = |\gamma|^{\frac{1}{2(n-1)}}\left\| d_\gamma \right\|_{0,\gamma}.$$
	Note that condition (\ref{eq:compatcondneg}) is easily met if the Neumann boundary condition $g$ and the source function $f$ are zero in the vicinity of the feature. 
\end{remark}

\begin{remark} \label{rmk:estnegtilde}
	Since $(4c_\gamma^2 - 1) > 0$ for all $\gamma$, remark that by Cauchy-Schwarz inequality, 
	\begin{align*}
	\mathcal{E}_\mathrm n(u_0) &\lesssim |\gamma|^\frac{1}{2(n-1)} \left[\| d_\gamma - \overline{d_\gamma}^\gamma \|_{0,\gamma}^2 + 4c_\gamma^2 |\gamma| \left(\overline{d_\gamma}^\gamma\right)^2 \right]^\frac{1}{2} \\
	&= |\gamma|^\frac{1}{2(n-1)} \left[ \|d_\gamma\|_{0,\gamma}^2 + \left( 4c_\gamma^2 -1 \right)|\gamma| \left(\overline{d_\gamma}^\gamma\right)^2 \right]^\frac{1}{2} \\
	& \lesssim \,c_\gamma |\gamma|^\frac{1}{2(n-1)} \left\| d_\gamma \right\|_{0,\gamma} =: \tilde{\mathcal{E}}_\mathrm n(u_0). 
	\end{align*}
	One could be tempted to use the simpler indicator $\tilde{\mathcal{E}}_\mathrm n(u_0)$, but when $n=2$ and under the flux conservation condition (\ref{eq:compatcondneg}), $\tilde{\mathcal{E}}_\mathrm n(u_0)$ is sub-optimal since in this case, $\tilde{\mathcal{E}}_\mathrm n(u_0) = c_\gamma \mathcal{E}_\mathrm n(u_0)$. Indeed, no lower bound can be provided for $\tilde{\mathcal{E}}_\mathrm n(u_0)$. 
\end{remark}

\subsection{Upper bound}
In this section, we prove that the error indicator defined in (\ref{eq:negestimator}) is reliable, that is, it is an upper bound for the defeaturing error.
\begin{theorem}\label{thm:upperbound}
	Let $u$ and $u_0$ be the weak solutions of problems (\ref{eq:originalpb}) and (\ref{eq:simplpb}), respectively. 
	If $\gamma$ is isotropic according to Definition \ref{as:isotropy}, then the defeaturing error in energy norm is bounded in terms of the estimator $\mathcal{E}_{\mathrm n}(u_0)$ introduced in (\ref{eq:negestimator}) as follows:
	\begin{equation*}
	\big|u-u_0|_{\Omega}\big|_{1,\Omega} \lesssim \mathcal{E}_{\mathrm n}(u_0).
	\end{equation*}
\end{theorem}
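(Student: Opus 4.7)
Since $u$ and $u_0$ both satisfy the same Dirichlet datum $h$ on $\Gamma_D$, the error $e := u - u_0|_{\Omega}$ belongs to $H^1_{0,\Gamma_D}(\Omega)$ and can therefore be used as a test function in both weak formulations. The plan is first to derive a clean error representation by using $e$ as a test function in (\ref{eq:weakoriginalpb}) on $\Omega$, and then writing $u_0$'s weak form on $\Omega$ (noting that $u_0$ solves $-\Delta u_0 = f$ in the larger domain $\Omega_0 \supset \Omega$, so Green's identity on $\Omega$ yields boundary terms on $\partial\Omega = \Gamma_D \cup \Gamma_N$). On $\Gamma_D$ the trace of $e$ vanishes; on $\Gamma_N \setminus \gamma$ the Neumann condition for $u_0$ coincides with that of $u$, so those terms cancel; on $\gamma \subset \Gamma_N$ the outward normal to $\Omega$ equals $-\mathbf{n}_F$, so the remaining contribution is exactly $\int_\gamma (g + \partial u_0/\partial \mathbf{n}_F)\, e\,\mathrm ds$. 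Thus
\begin{equation*}
|e|_{1,\Omega}^{2} \;=\; \int_{\gamma} d_\gamma\, e\,\mathrm ds.
\end{equation*}

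Next I would split the right-hand side into its mean and oscillation parts,
\begin{equation*}
\int_{\gamma} d_\gamma\, e\,\mathrm ds \;=\; \int_{\gamma}\bigl(d_\gamma - \overline{d_\gamma}^{\gamma}\bigr)\bigl(e - \overline{e}^{\gamma}\bigr)\,\mathrm ds \;+\; \overline{d_\gamma}^{\gamma}\int_{\gamma} e\,\mathrm ds,
\end{equation*}
using that the oscillation $d_\gamma - \overline{d_\gamma}^\gamma$ integrates to zero on $\gamma$. For the first summand, Cauchy--Schwarz and a Poincar\'e-type estimate on the (isotropic) manifold piece $\gamma$ combined with a trace inequality from $\Omega$ to $\gamma$ should give
\begin{equation*}
\bigl\|e - \overline{e}^{\gamma}\bigr\|_{0,\gamma} \;\lesssim\; |\gamma|^{\tfrac{1}{2(n-1)}}\, |e|_{1,\Omega},
\end{equation*}
whose sharp $|\gamma|$-scaling follows by a standard rescaling argument on a reference configuration after invoking Definition~\ref{as:isotropy}. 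This yields the first term of $\mathcal{E}_{\mathrm n}(u_0)$.

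For the mean term I would bound $|\int_\gamma e\,\mathrm ds|$ by the $H^1$-seminorm of $e$ with the correct $|\gamma|$-dependence. The natural route is a duality argument: test against the harmonic (mixed-problem) lift $\psi$ of a unit Neumann datum $1/|\gamma|$ on $\gamma$ (homogeneous elsewhere), so that $\frac{1}{|\gamma|}\int_\gamma e = \int_\Omega \nabla \psi\cdot\nabla e$, and estimate $\|\nabla\psi\|_{0,\Omega}$ using the Green's function of the mixed problem on $\Omega$. In $n=3$ this is bounded uniformly, while in $n=2$ the logarithmic singularity of the Green's function produces exactly the factor $c_\gamma$ appearing in (\ref{eq:negconstant}); the definition $\eta = -\log\eta$ handles the degenerate case where $|\gamma|\to 1$ and guarantees $c_\gamma \geq \eta^{1/2} > 0$. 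The scaling $|\gamma|^{n/(2(n-1))}$ then drops out of the change of variables. Combining the two Cauchy--Schwarz inequalities and collecting the prefactors recovers $\mathcal{E}_{\mathrm n}(u_0)\cdot |e|_{1,\Omega}$; dividing by $|e|_{1,\Omega}$ gives the claim.

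\textbf{Main obstacle.} The principal difficulty is establishing the two $|\gamma|$-explicit inequalities --- the Poincar\'e--trace bound on the oscillation and, especially, the sharp mean-value bound with the $n=2$ logarithmic factor --- uniformly in the size but not the shape of $\gamma$. The isotropy assumption on $\gamma$ is precisely what makes the reference-configuration scaling work, and I expect these technical estimates to be the content of the appendix lemmas the authors refer to.
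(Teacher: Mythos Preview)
Your error representation and the mean/oscillation splitting are exactly the paper's, and your treatment of the oscillation term matches the paper's use of the boundary Poincar\'e inequality (Lemma~\ref{lemma:poincarebd}) followed by the trace inequality $|e|_{\frac{1}{2},\gamma}\le |e|_{\frac{1}{2},\partial\Omega}\lesssim |e|_{1,\Omega}$.

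Where you diverge is the mean term. You propose a duality/Green's-function argument: lift the unit Neumann datum on $\gamma$ harmonically and estimate its energy. The paper instead bounds $\overline{d_\gamma}^{\gamma}\!\int_\gamma e\,\mathrm ds$ by $|\overline{d_\gamma}^{\gamma}|\,|\gamma|^{1/2}\|e\|_{0,\gamma}$ and then invokes Lemma~\ref{lemma:savare}, which gives $\|e\|_{0,\gamma}\lesssim c_\gamma|\gamma|^{\frac{1}{2(n-1)}}\|e\|_{\frac{1}{2},\partial\Omega}$ directly via Sobolev embedding: H\"older's inequality against $\|e\|_{L^{2p}(\partial\Omega)}$, the embedding $H^{1/2}(\partial\Omega)\hookrightarrow L^{2p}(\partial\Omega)$ with the sharp $p$-dependent constant $\|v\|_{L^{2p}}\lesssim\sqrt{p}\,\|v\|_{1/2}$ in two dimensions (borrowed from \cite{benbelgacembuffamaday}), and then the choice $p=\max(|\log|\gamma||,\eta)$, which produces exactly $c_\gamma$. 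In three dimensions one simply takes $p=2$. Your Green's-function route is viable and would yield the same scaling, but it requires energy estimates for the mixed-problem Green's function on $\Omega$ with constants uniform in $|\gamma|$ --- a technically heavier task, especially near the Dirichlet boundary. The paper's embedding argument is shorter, avoids PDE duality on $\Omega$ altogether, and isolates the logarithm through a one-line optimization in $p$.
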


\begin{proof}
	Let us first consider the simplified problem (\ref{eq:simplpb}) restricted to $\Omega$ with the natural Neumann boundary condition on $\gamma$, that is, since $\mathbf{n}_F = -\mathbf{n}$ on $\gamma$, the restriction $u_0\vert_{\Omega}\in H^1_{h,\Gamma_D}(\Omega)$ is the weak solution of 
	\begin{align} \label{eq:simplpbrestricted}
	\begin{cases}
	-\Delta \left(u_0\vert_{\Omega}\right) = f &\text{ in } \Omega \\
	u_0\vert_{\Omega} = h &\text{ on } \Gamma_D \\ \vspace{1mm}
	\displaystyle\frac{\partial \left(u_0\vert_{\Omega}\right)}{\partial \mathbf{n}}  = g &\text{ on } \Gamma_N\setminus {\gamma} \\ 
	\displaystyle\frac{\partial \left(u_0\vert_{\Omega}\right)}{\partial \mathbf{n}}  = -\displaystyle\frac{\partial u_0}{\partial \mathbf{n}_F} &\text{ on } \gamma.
	\end{cases}
	\end{align}
	By abuse of notation, we omit the explicit restriction of $u_0$ to $\Omega$. 
	Then, for all $v\in H^1_{0,\Gamma_D}(\Omega)$, 
	\begin{equation}\label{eq:weaksimplpbrestricted}
	\int_{\Omega} \nabla u_0 \cdot \nabla v \,\mathrm dx = \int_{\Omega} fv \,\mathrm dx + \int_{\Gamma_N\setminus \gamma} g v \,\mathrm ds - \int_{\gamma} \frac{\partial u_0}{\partial \mathbf{n}_F} v \,\mathrm ds.
	\end{equation}
	Let $e:= u-u_0 \in H^1_{0,\Gamma_D}(\Omega)$. Then for all $v\in H^1_{0,\Gamma_D}(\Omega)$, it holds from equations (\ref{eq:weakoriginalpb}) and (\ref{eq:weaksimplpbrestricted}) that
	\begin{align}
	\int_{\Omega} \nabla e\cdot \nabla v \,\mathrm{d}x 
	&= 
	\int_{\Gamma_N} g v \,\mathrm ds 
	- \int_{\Gamma_N\setminus \gamma} g v \,\mathrm ds + \int_{\gamma} \frac{\partial u_0}{\partial \mathbf{n}_F} v \,\mathrm ds \nonumber\\
	&= \int_{\gamma} \left(g + \frac{\partial u_0}{\partial \mathbf{n}_F}\right)v\,\mathrm ds = \int_{\gamma} d_\gamma v \,\mathrm ds. \label{eq:negrewriteerror}
	\end{align}
	Now, if we take $v = e \in H^1_{0,\Gamma_D}(\Omega)$ in (\ref{eq:negrewriteerror}), 
	then
	\begin{equation} \label{eq:mainnegproof}
	|e|_{1,\Omega}^2 = \int_{\gamma} d_\gamma e \,\mathrm ds
	= \int_{\gamma} \left(d_\gamma - \overline{d_\gamma}^\gamma\right) e \,\mathrm ds + \overline{d_\gamma}^\gamma \int_{\gamma} e \,\mathrm ds.
	\end{equation}
	Let us first estimate the first term of (\ref{eq:mainnegproof}).
	Thanks to Poincar\'e inequality of Appendix \ref{lemma:poincarebd} and a trace inequality, 
	\begin{align}
	\int_{\gamma} \left(d_\gamma-\overline{d_\gamma}^\gamma\right) e \,\mathrm ds
	&= \int_{\gamma} \left(d_\gamma-\overline{d_\gamma}^\gamma\right) \left(e-\overline{e}^\gamma\right) \,\mathrm ds \nonumber \leq \left\|d_\gamma-\overline{d_\gamma}^\gamma\right\|_{0,\gamma} \left\|e-\overline e^\gamma\right\|_{0,\gamma} \nonumber \\
	& \lesssim \left\|d_\gamma-\overline{d_\gamma}^\gamma\right\|_{0,\gamma} |\gamma|^{\frac{1}{2(n-1)}} |e|_{\frac{1}{2},\gamma} \leq |\gamma|^{\frac{1}{2(n-1)}} \left\|d_\gamma-\overline{d_\gamma}^\gamma \right\|_{0,\gamma} |e|_{\frac{1}{2},\partial \Omega} \nonumber\\
	& \lesssim |\gamma|^{\frac{1}{2(n-1)}} \left\|d_\gamma-\overline{d_\gamma}^\gamma\right\|_{0,\gamma} |e|_{1,\Omega}. \label{eq:12}
	\end{align}
	Moreover, the second term of (\ref{eq:mainnegproof}) can be estimated thanks to Appendix \ref{lemma:savare} and a trace inequality, that is,
	\begin{align} 
	\overline{d_\gamma}^\gamma \int_{\gamma} e \,\mathrm ds \leq \left| \overline{d_\gamma}^\gamma \right| |\gamma|^\frac{1}{2}\|e\|_{0,\gamma} &\lesssim \left| \overline{d_\gamma}^\gamma \right| c_{\gamma} |\gamma|^{\frac{1}{2(n-1)}+\frac{1}{2}} \|e\|_{\frac{1}{2},\partial \Omega} \nonumber \\
	&\lesssim c_{\gamma} |\gamma|^{\frac{n}{2(n-1)}} \left| \overline{d_\gamma}^\gamma \right| |e|_{1,\Omega}. \label{eq:eq19}
	\end{align}
	Therefore, combining (\ref{eq:mainnegproof}), (\ref{eq:12}) and (\ref{eq:eq19}), and simplifying on both sides, we obtain the desired result.
\end{proof}

\subsection{Lower bound}
In this section, we prove that the error indicator defined in (\ref{eq:negestimator}) is efficient, that is, it is a lower bound for the defeaturing error, up to oscillations. In the case $n=2$, the flux conservation assumption (\ref{eq:compatcondneg}) is also required. 

\begin{theorem}\label{thm:lowerbound}
	Let $u$ and $u_0$ 
	be as in Theorem \ref{thm:upperbound}, and assume that $\gamma$ is isotropic and regular according to Definitions \ref{as:isotropy} and \ref{as:pwsmoothshapereg}. 
	Suppose that either $n=3$, or $n=2$ and the flux conservation condition (\ref{eq:compatcondneg}) is satisfied. Then the defeaturing error, in energy norm, bounds up to oscillations the estimator $\mathcal{E}_{\mathrm n}(u_0)$ introduced in (\ref{eq:negestimator}), that is
	\begin{equation*}
	\mathcal{E}_{\mathrm n}(u_0) \lesssim \big|u-u_0|_{\Omega}\big|_{1,\Omega} + \mathrm{osc}_{\mathrm n}(u_0),
	\end{equation*}
	where
	\begin{align} 
	\mathrm{osc}_{\mathrm n}(u_0) &:= \left| \gamma\right|^\frac{1}{2(n-1)} \left\| d_\gamma - \Pi_{{m}}\left(d_\gamma\right)\right\|_{0,\gamma} \label{eq:osc}
	\end{align}
	for any $m\in \mathbb{N}$, with $\Pi_m:=\Pi_{{m},\gamma}$ being the extension of the Cl\'ement operator defined in Sec.~\ref{sec:notation}. 
\end{theorem}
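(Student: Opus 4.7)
The plan is to reduce both cases of the theorem to the single estimate
\[
|\gamma|^{\frac{1}{2(n-1)}}\|d_\gamma\|_{0,\gamma} \lesssim \big|u-u_0|_{\Omega}\big|_{1,\Omega} + \mathrm{osc}_{\mathrm n}(u_0),
\]
and then to establish this by testing the identity $\int_\gamma d_\gamma v\,\mathrm ds = \int_\Omega \nabla e \cdot \nabla v\,\mathrm dx$ of (\ref{eq:negrewriteerror}) (with $e := u - u_0|_\Omega$) against a polynomial-bubble lifting.

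Under the hypotheses of the theorem the estimator collapses to $\mathcal{E}_{\mathrm n}(u_0)^2 = |\gamma|^{\frac{1}{n-1}}\|d_\gamma\|_{0,\gamma}^2$. Indeed, writing $\bar d := \overline{d_\gamma}^\gamma$, the Pythagorean identity $\|d_\gamma\|_{0,\gamma}^2 = \|d_\gamma - \bar d\|_{0,\gamma}^2 + |\gamma|\bar d^2$ gives
\[
|\gamma|^{\frac{1}{n-1}}\|d_\gamma\|_{0,\gamma}^2 = |\gamma|^{\frac{1}{n-1}}\|d_\gamma - \bar d\|_{0,\gamma}^2 + |\gamma|^{\frac{n}{n-1}}\bar d^2,
\]
which coincides with $\mathcal{E}_{\mathrm n}(u_0)^2$ when $n=3$ because $c_\gamma = 1$, and again when $n=2$ because the flux conservation condition (\ref{eq:compatcondneg}) together with Remark \ref{rmk:compatcondneg} forces $\bar d = 0$. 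It therefore suffices to control $|\gamma|^{\frac{1}{2(n-1)}}\|d_\gamma\|_{0,\gamma}$.

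For the core estimate I would set $w := \Pi_{{m},\gamma}(d_\gamma)$. Since $\Pi_m$ takes values in $\mathbb{Q}_{m,0}^{\mathrm{pw}}(\gamma)$, the function $w$ vanishes on $\partial\gamma$ and belongs to $H^{1/2}_{00}(\gamma)$; its zero extension $w^\star$ to $\partial\Omega$ satisfies $\|w^\star\|_{\frac{1}{2},\partial\Omega} = \|w\|_{H^{1/2}_{00}(\gamma)}$ by the identity recalled in Section \ref{sec:notation}, and since $\Gamma_D\cap\partial F=\emptyset$ one obtains a lifting $v \in H^1_{0,\Gamma_D}(\Omega)$ with $\mathrm{tr}_\gamma(v)=w$ and $|v|_{1,\Omega} \lesssim \|w\|_{H^{1/2}_{00}(\gamma)}$. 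Testing (\ref{eq:negrewriteerror}) against $v$ and splitting $\int_\gamma d_\gamma w\,\mathrm ds = \int_\gamma d_\gamma^2\,\mathrm ds - \int_\gamma d_\gamma(d_\gamma - w)\,\mathrm ds$, a Cauchy--Schwarz application gives
\[
\|d_\gamma\|_{0,\gamma}^2 \leq \big|u-u_0|_\Omega\big|_{1,\Omega}\,\|w\|_{H^{1/2}_{00}(\gamma)} + \|d_\gamma\|_{0,\gamma}\,\|d_\gamma - \Pi_m d_\gamma\|_{0,\gamma}.
\]
An elementwise inverse estimate for piecewise polynomials of fixed degree, combined with the isotropy and regularity of $\gamma$, then yields the scaling bound $\|w\|_{H^{1/2}_{00}(\gamma)} \lesssim |\gamma|^{-\frac{1}{2(n-1)}}\|w\|_{0,\gamma}$; a triangle inequality on $\|w\|_{0,\gamma}$, Young's absorption of the quadratic $\|d_\gamma\|_{0,\gamma}$ term on the left, and multiplication by $|\gamma|^{\frac{1}{2(n-1)}}$ then deliver the target estimate, and hence the theorem.

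The main technical obstacle is the scaling lemma $\|w\|_{H^{1/2}_{00}(\gamma)} \lesssim |\gamma|^{-\frac{1}{2(n-1)}}\|w\|_{0,\gamma}$ for $w\in \mathbb{Q}_{m,0}^{\mathrm{pw}}(\gamma)$: it combines an elementwise inverse inequality with a careful reassembly of the non-local semi-norm $|w|_{H^{1/2}_{00}(\gamma)}$ across the flat, shape-regular partition of $\gamma$ provided by Definition \ref{as:pwsmoothshapereg}, leveraging the equivalence with $\int_\gamma w^2/\mathrm{dist}(\cdot,\partial\gamma)$ recalled in Section \ref{sec:notation} to handle the weight blowing up near $\partial\gamma$. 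A secondary, more standard ingredient is the existence of the lifting $v$ with the stated energy bound, which is available because $\Omega$ is Lipschitz and $\Gamma_D$ is disjoint from $\partial F$. Once these two pieces are secured, the rest of the proof is a short calculation.
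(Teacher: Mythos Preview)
Your proposal is correct and uses essentially the same ingredients as the paper: the error representation \eqref{eq:negrewriteerror}, a bounded lifting from $H^{1/2}_{00}(\gamma)$ into $H^1_{0,\Gamma_D}(\Omega)$ (the paper takes the harmonic extension of $w^\star$), and the polynomial inverse inequality $\|w\|_{H^{1/2}_{00}(\gamma)}\lesssim|\gamma|^{-\frac{1}{2(n-1)}}\|w\|_{0,\gamma}$ for $w\in\mathbb{Q}_{m,0}^{\mathrm{pw}}(\gamma)$, which is exactly the intermediate estimate \eqref{eq:inverseH0012} in the proof of Lemma~\ref{lemma:inverseineq} (proved there via interpolation rather than via the weighted-distance characterisation you sketch). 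The only organisational difference is that the paper passes explicitly through the dual norm, first showing $\|d_\gamma\|_{H^{-1/2}_{00}(\gamma)}\lesssim|e|_{1,\Omega}$ and then invoking Lemmas~\ref{lemma:inverseineq} and~\ref{lemma:inverseineq2}, whereas you test directly with the single function $v$ lifting $\Pi_m d_\gamma$ and close by Young absorption; both routes are equivalent.
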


\begin{proof}
	To simplify the notation, we omit to explicitly write the restriction of $u_0$ to $\Omega$ when it would be necessary, since the context makes it clear. As before, let $e:=u-u_0\in H^1_{0,\Gamma_D}(\Omega)$. From equation (\ref{eq:negrewriteerror}), for all $v\in H_{0,\Gamma_D}^1(\Omega)$, 
	\begin{equation} \label{eq:fornegdual}
	\int_{\gamma} d_\gamma v \,\mathrm ds = \int_{\Omega} \nabla e \cdot \nabla v \,\mathrm dx \leq |e|_{1,\Omega} |v|_{1,\Omega}.	
	\end{equation}
	Now, for all $w\in H^\frac{1}{2}_{00}(\gamma)$, let $u_w\in H^1_{0,\partial\Omega\setminus\gamma}(\Omega)\subset H^1_{0,\Gamma_D}(\Omega)$ be the unique weak solution of
	\begin{align*}
	\begin{cases}
	-\Delta u_w = 0 & \text{in } \Omega \\
	u_w = w^\star & \text{on } \partial \Omega,
	\end{cases}
	\end{align*}
	where $w^\star$ is the extension of $w$ by $0$. Then $\left|u_w\right|_{1,\Omega} \lesssim \left\|w^\star\right\|_{\frac{1}{2},\partial \Omega} = \left\|w\right\|_{H_{00}^{1/2}(\gamma)}$ by continuity of the solution on the data. Therefore, using (\ref{eq:fornegdual}),
	\begin{align}
	\left\| d_\gamma\right\|_{H^{-1/2}_{00}(\gamma)} &= \sup_{\substack{w\in H_{00}^{1/2}(\gamma)\\ w\neq 0}} \frac{\displaystyle\int_{\gamma} d_\gamma w \,\mathrm ds}{\|w\|_{H_{00}^{1/2}(\gamma)}} \lesssim \sup_{\substack{w\in H_{00}^{1/2}(\gamma)\\ w\neq 0}} \frac{\displaystyle\int_{\gamma} d_\gamma u_w \,\mathrm ds}{|u_w|_{1,\Omega}} \nonumber \\
	&\leq \sup_{\substack{v\in H^1_{0,\Gamma_D}(\Omega)\\ v\neq 0}} \frac{\displaystyle\int_{\gamma} d_\gamma v \,\mathrm ds}{|v|_{1,\Omega}}\leq \sup_{\substack{v\in H^1_{0,\Gamma_D}(\Omega)\\ v\neq 0}} \frac{|e|_{1,\Omega}|v|_{1,\Omega}}{|v|_{1,\Omega}} = \,|e|_{1,\Omega}. \label{eq:dualerrneg}
	\end{align}
	Moreover, using Remark \ref{rmk:estnegtilde} if $n=3$, or Remark \ref{rmk:compatcondneg} if $n=2$ and if the flux conservation condition (\ref{eq:compatcondneg}) is satisfied, then 
	$$\mathcal E_{\mathrm n}(u_0) \lesssim |\gamma|^\frac{1}{2(n-1)}\|d_\gamma\|_{0,\gamma}.$$
	Therefore, using the triangle inequality and applying the inverse inequality of Appendix \ref{lemma:inverseineq}, we get
	\begin{align} 
	\mathcal E_{\mathrm n}(u_0)
	&\lesssim |\gamma|^\frac{1}{2(n-1)} \left( \left\| \Pi_{{m}}\left(d_\gamma\right)\right\|_{0,\gamma} + \left\| d_\gamma - \Pi_{{m}}\left(d_\gamma\right)\right\|_{0,\gamma} \right) \nonumber \\
	&\lesssim \left\| \Pi_{{m}}\left(d_\gamma\right)\right\|_{H^{-1/2}_{00}(\gamma)} + |\gamma|^\frac{1}{2(n-1)} \left\| d_\gamma - \Pi_{{m}}\left(d_\gamma\right)\right\|_{0,\gamma}. \label{eq:lowerboundneg1}
	\end{align}
	Finally, using another time the triangle inequality, Appendix \ref{lemma:inverseineq2} and (\ref{eq:dualerrneg}), we obtain
	\begin{align} 
	\left\| \Pi_{{m}}\left(d_\gamma\right)\right\|_{H^{-1/2}_{00}(\gamma)} &\leq \left\| d_\gamma\right\|_{H^{-1/2}_{00}(\gamma)} + \left\| \Pi_{{m}}\left(d_\gamma\right) - d_\gamma\right\|_{H^{-1/2}_{00}(\gamma)} \nonumber \\
	&\lesssim |e|_{1,\Omega} + |\gamma|^\frac{1}{2(n-1)} \left\| d_\gamma - \Pi_{{m}}\left(d_\gamma\right)\right\|_{0,\gamma}. \label{eq:lowerboundneg2}
	\end{align}
	Consequently, combining (\ref{eq:lowerboundneg1}) and (\ref{eq:lowerboundneg2}), and recalling the definition (\ref{eq:osc}) of the oscillations, then
	$$\mathcal E_{\mathrm n}(u_0) \lesssim |e|_{1,\Omega} + \text{osc}_{\mathrm n}(u_0). $$
\end{proof}

\begin{remark} \label{rmk:oscsmall}
	In some sense, the oscillations pollute the lower bound in Theorem \ref{thm:lowerbound}. It is therefore important to make sure that the oscillations are asymptotically smaller than the defeaturing error, with respect to the size of the feature. While there is a strong numerical evidence of it (see Sec.~\ref{sec:numexp}), an \textit{a priori} error analysis of the defeaturing problem is needed in order to obtain a rigorous proof, but this goes beyond the scope of this paper. 
	However, we are expecting the term $\left\|d_\gamma\right\|_{0,\gamma}$ to depend on the measure of $\gamma$. 
	When the data is regular, so is $u_0$, and it is then always possible to choose %$\mathbf{m}=(m,\ldots,m)$ with 
	$m$ large enough so that the asymptotic behavior of the oscillations is $\mathcal{O}\left(|\gamma|^{m+\frac{1}{2(n-1)}}\right)$. Therefore, upon a wise choice of $ m$, the oscillations converge faster than the defeaturing error with respect to the measure of $\gamma$. 
\end{remark}

\section{Defeaturing a geometry with a complex feature} \label{sec:complexfeat}
In this section, instead of discussing only a defeaturing error estimator for a geometry containing a positive feature, we directly generalize the previous study to a geometry containing a complex feature, that is, a feature containing both negative and positive components. More precisely, we first generalize the defeaturing problem of Sec.~\ref{sec:defeatpb} to this context, and then we derive a corresponding optimal \textit{a posteriori} defeaturing error estimator. Building upon the study of Sec.~\ref{sec:negativeest}, we show that the derived estimator is an upper bound and a lower bound (up to oscillations) of the energy norm of the defeaturing error, by accurately tracking the dependence of all constants from the size of the feature.

\subsection{Defeaturing model problem for a complex feature} \label{sec:gendefeatpb}
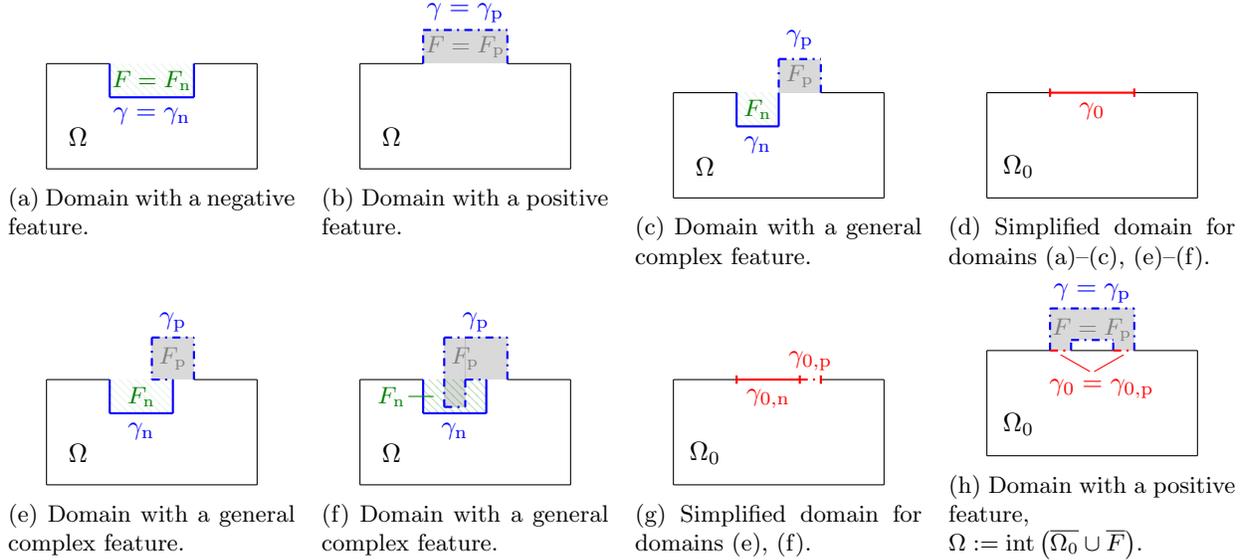
\begin{figure}
	\centering
	\begin{subfigure}[b]{0.23\textwidth}
		\begin{center}
			\begin{tikzpicture}[scale=2.8]
			\fill[pattern=north west lines, pattern color=green!50!black, opacity=0.3] (2.3,0.84) rectangle (2.7,1.00);
			\draw (2,0.5) -- (3,0.5) ;
			\draw (2,0.5) -- (2,1) ;
			\draw (3,0.5) -- (3,1) ;
			\draw (2,1) -- (2.3,1) ;
			\draw (2.7,1) -- (3,1) ;
			\draw[blue,thick] (2.3,1) -- (2.3,0.84) ;
			\draw[blue,thick] (2.7,1) -- (2.7,0.84) ;
			\draw[blue,thick] (2.3,0.84) -- (2.7,0.84) ;
			\draw (2.15,0.65) node{$\Omega$} ;
			\draw[green!50!black] (2.5,0.92)node{\small $F=F_\mathrm n$};
			\draw[blue,thick] (2.5,0.84) node[below]{$\gamma=\gamma_\mathrm n$} ;
			\end{tikzpicture}
			\caption{Domain with a negative feature.\\}
			\label{fig:ex1acomplex}
		\end{center}
	\end{subfigure}
	~
	\begin{subfigure}[b]{0.23\textwidth}
		\begin{center}
			\begin{tikzpicture}[scale=2.8]
			\fill[gray, fill, opacity=0.3] (0.3,1) rectangle (0.7,1.16);
			\draw (0,0.5) -- (1,0.5) ;
			\draw (0,0.5) -- (0,1) ;
			\draw (1,0.5) -- (1,1) ;
			\draw (0,1) -- (0.3,1) ;
			\draw (0.7,1) -- (1,1) ;
			\draw[blue,thick,dash dot] (0.3,1) -- (0.3,1.16) ;
			\draw[blue,thick,dash dot] (0.7,1) -- (0.7,1.16) ;
			\draw[blue,thick] (0.5,1.15) node[above]{$\gamma=\gamma_\mathrm p$} ;
			\draw[blue,thick,dash dot] (0.3,1.16) -- (0.7,1.16) ;
			\draw (0.15,0.65) node{$\Omega$} ;
			\draw[gray] (0.5,1.08)node{\small $F=F_\mathrm p$};
			\end{tikzpicture}
			\caption{Domain with a positive feature.\\}
			\label{fig:ex1ccomplex}
		\end{center}
	\end{subfigure}
	~
	\begin{subfigure}[b]{0.23\textwidth}
		\begin{center}
			\begin{tikzpicture}[scale=2.8]
			\fill[pattern=north west lines, pattern color=green!50!black, opacity=0.3] (0.3,0.84) rectangle (0.5,1);
			\fill[gray, fill, opacity=0.3] (0.5,1) rectangle (0.7,1.16);
			\draw (0,0.5) -- (1,0.5) ;
			\draw (0,0.5) -- (0,1) ;
			\draw (1,0.5) -- (1,1) ;
			\draw (0,1) -- (0.3,1) ;
			\draw (0.7,1) -- (1,1) ;
			\draw[blue,thick] (0.3,1) -- (0.3,0.84) ;
			\draw[blue,thick,dash dot] (0.5,1.16) -- (0.5,1) ;
			\draw[blue,thick] (0.5,1)--(0.5,0.84) ;
			\draw[blue,thick,dash dot] (0.7,1) -- (0.7,1.16) ;
			\draw[blue,thick] (0.4,0.84) node[below]{$\gamma_\mathrm n$} ;
			\draw[blue,thick] (0.6,1.16) node[above]{$\gamma_\mathrm p$} ;
			\draw[blue,thick,dash dot] (0.5,1.16) -- (0.7,1.16) ;
			\draw[blue,thick] (0.3,0.84) -- (0.5,0.84) ;
			\draw (0.15,0.65) node{$\Omega$} ;
			\draw[gray] (0.6,1.08)node{\small $F_\mathrm p$};
			\draw[green!50!black] (0.4,0.92)node{\small $F_\mathrm n$};
			\end{tikzpicture}
			\caption{Domain with a general complex feature.}
			\label{fig:ex1dcomplex}
		\end{center}
	\end{subfigure}
	~
	\begin{subfigure}[b]{0.23\textwidth}
		\begin{center}
			\begin{tikzpicture}[scale=2.8]
			\draw (0,0.5) -- (1,0.5) ;
			\draw (0,0.5) -- (0,1) ;
			\draw (1,0.5) -- (1,1) ;
			\draw (0,1) -- (0.3,1) ;
			\draw (0.7,1) -- (1,1) ;
			\draw[red,thick] (0.3,1) -- (0.7,1) ;
			\draw[red,thick] (0.3,0.98) -- (0.3, 1.02); 
			\draw[red,thick] (0.7,0.98) -- (0.7, 1.02); 
			\draw[red,thick] (0.5,1) node[below]{$\gamma_0$} ;
			\draw (0.15,0.65) node{$\Omega_0$} ;
			\end{tikzpicture}
			\caption{Simplified domain for domains (a)--(c), (e)--(f).}
			\label{fig:ex1bcomplex}
		\end{center}
	\end{subfigure}
	~
	\begin{subfigure}[b]{0.23\textwidth}
		\begin{center}
			\begin{tikzpicture}[scale=2.8]
			\fill[pattern=north west lines, pattern color=green!50!black, opacity=0.3] (0.3,0.84) rectangle (0.6,1);
			\fill[gray, fill, opacity=0.3] (0.5,1) rectangle (0.7,1.2);
			\draw (0,0.5) -- (1,0.5) ;
			\draw (0,0.5) -- (0,1) ;
			\draw (1,0.5) -- (1,1) ;
			\draw (0,1) -- (0.3,1) ;
			\draw (0.7,1) -- (1,1) ;
			\draw[blue,thick] (0.3,1) -- (0.3,0.84) ;
			\draw[blue,thick,dash dot] (0.5,1.2) -- (0.5,1); 
			\draw[blue,thick,dash dot] (0.5,1) -- (0.6,1); 
			\draw[blue,thick](0.6,1) -- (0.6,0.84) ;
			\draw[blue,thick,dash dot] (0.7,1) -- (0.7,1.2) ;
			\draw[blue,thick] (0.45,0.84) node[below]{$\gamma_\mathrm n$} ;
			\draw[blue,thick] (0.6,1.18) node[above]{$\gamma_\mathrm p$} ;
			\draw[blue,thick,dash dot] (0.5,1.2) -- (0.7,1.2) ;
			\draw[blue,thick] (0.3,0.84) -- (0.6,0.84) ;
			\draw (0.15,0.65) node{$\Omega$} ;
			\draw[gray] (0.6,1.1)node{\small $F_\mathrm p$};
			\draw[green!50!black] (0.45,0.92)node{\small $F_\mathrm n$};
			\end{tikzpicture}
			\caption{Domain with a general complex feature.}
			\label{fig:ex1fcomplex}
		\end{center}
	\end{subfigure}
	~
	\begin{subfigure}[b]{0.23\textwidth}
		\begin{center}
			\begin{tikzpicture}[scale=2.8]
			\fill[gray, fill, opacity=0.3] (0.4,0.87) rectangle (0.5,1.2);
			\fill[gray, fill, opacity=0.3] (0.5,1) rectangle (0.7,1.2);
			\fill[pattern=north west lines, pattern color=green!50!black, opacity=0.5](0.3,0.84) rectangle (0.6,1);
			\draw (0,0.5) -- (1,0.5) ;
			\draw (0,0.5) -- (0,1) ;
			\draw (1,0.5) -- (1,1) ;
			\draw (0,1) -- (0.3,1) ;
			\draw (0.7,1) -- (1,1) ;
			\draw[blue,thick] (0.3,1) -- (0.3,0.84) ;
			\draw[blue,thick,dash dot] (0.4,1.2) -- (0.4,0.87); 
			\draw[blue,thick,dash dot] (0.5,1) -- (0.5,0.87); 
			\draw[blue,thick,dash dot] (0.4,0.87) -- (0.5,0.87); 
			\draw[blue,thick,dash dot] (0.5,1) -- (0.6,1); 
			\draw[blue,thick](0.6,1) -- (0.6,0.84) ;
			\draw[blue,thick,dash dot] (0.7,1) -- (0.7,1.2) ;
			\draw[blue,thick] (0.45,0.84) node[below]{$\gamma_\mathrm n$} ;
			\draw[blue,thick] (0.55,1.18) node[above]{$\gamma_\mathrm p$} ;
			\draw[blue,thick,dash dot] (0.4,1.2) -- (0.7,1.2) ;
			\draw[blue,thick] (0.3,0.84) -- (0.6,0.84) ;
			\draw (0.15,0.65) node{$\Omega$} ;
			\draw[gray] (0.5,1.1)node{\small$F_\mathrm p$};
			\draw[green!50!black] (0.15,0.92)node{\small $F_\mathrm n$};
			\draw[green!50!black] (0.35,0.92) -- (0.23,0.92);
			\end{tikzpicture}
			\caption{Domain with a general complex feature.}
			\label{fig:ex1ecomplex}
		\end{center}
	\end{subfigure}
	~
	\begin{subfigure}[b]{0.23\textwidth}
		\begin{center}
			\begin{tikzpicture}[scale=2.8]
			\draw (0,0.5) -- (1,0.5) ;
			\draw (0,0.5) -- (0,1) ;
			\draw (1,0.5) -- (1,1) ;
			\draw (0,1) -- (0.3,1) ;
			\draw (0.7,1) -- (1,1) ;
			\draw[red,thick] (0.3,1) -- (0.6, 1); 
			\draw[red,thick] (0.6,0.98) -- (0.6, 1.02); 
			\draw[red,thick] (0.3,0.98) -- (0.3, 1.02); 
			\draw[red,thick] (0.7,0.98) -- (0.7, 1.02); 
			\draw[red,thick,dash dot] (0.6,1) -- (0.7,1) ;
			\draw[red,thick] (0.45,1) node[below]{$\gamma_{0,\mathrm n}$} ;
			\draw[red,thick] (0.65,1) node[above]{$\gamma_{0,\mathrm p}$} ;
			\draw (0.15,0.65) node{$\Omega_0$} ;
			\end{tikzpicture}
			\caption{Simplified domain for domains (e), (f).}
			\label{fig:ex1gcomplex}
		\end{center}
	\end{subfigure}
	~
	\begin{subfigure}[b]{0.23\textwidth}
		\begin{center}
			\begin{tikzpicture}[scale=2.8]
			\fill[gray, fill, opacity=0.3] (0.3,1.05) rectangle (0.7,1.2);
			\fill[gray, fill, opacity=0.3] (0.3,1) rectangle (0.4,1.05);
			\fill[gray, fill, opacity=0.3] (0.6,1) rectangle (0.7,1.05);
			\draw (0,0.5) -- (1,0.5) ;
			\draw (0,0.5) -- (0,1) ;
			\draw (1,0.5) -- (1,1) ;
			\draw (0,1) -- (0.3,1) ;
			\draw (0.7,1) -- (1,1) ;
			\draw (0.4,1) -- (0.6,1);
			\draw[blue,thick,dash dot] (0.3,1) -- (0.3, 1.2); 
			\draw[blue,thick,dash dot] (0.3,1.2) -- (0.7, 1.2); 
			\draw[blue,thick,dash dot] (0.7,1) -- (0.7, 1.2); 
			\draw[blue,thick,dash dot] (0.4,1) -- (0.4, 1.05); 
			\draw[blue,thick,dash dot] (0.4,1.05) -- (0.6, 1.05); 
			\draw[blue,thick,dash dot] (0.6,1) -- (0.6, 1.05); 
			\draw[blue,thick] (0.5,1.18) node[above]{$\gamma = \gamma_{\mathrm p}$} ;
			\draw (0.15,0.65) node{$\Omega_0$} ;
			\draw[gray] (0.5,1.1)node{\small $F=F_\mathrm p$};
			\draw[red,thick,dash dot] (0.6,1) -- (0.7,1) ;
			\draw[red,thick,dash dot] (0.3,1) -- (0.4,1) ;
			\draw[red] (0.49,0.9) -- (0.35,0.98);
			\draw[red] (0.51,0.9) -- (0.65,0.98);
			\draw[red] (0.545,0.91) node[below]{$\gamma_0 = \gamma_{0,\mathrm p}$} ;
			\end{tikzpicture}
			\caption{Domain  with a positive feature, \\$\Omega := \text{int}\left(\overline{\Omega_0} \cup \overline{F}\right)$.}
			\label{fig:ex1hcomplex}
		\end{center}
	\end{subfigure}
	\caption{Different examples of geometries with a negative, a positive, or a general complex feature.} \label{fig:exfeatcomplex}
\end{figure}

Suppose now that $F\subset \mathbb{R}^n$ is a complex feature. More precisely, this means that we suppose that $F$ is an open Lipschitz domain which is composed of a negative component $F_\mathrm n$ and a positive component $F_\mathrm p$ that can have a non-empty intersection (see Fig.~\ref{fig:exfeatcomplex}). More precisely, $F=\mathrm{int}\left(\overline{F_\mathrm n}\cup \overline{F_\mathrm p}\right)$, where $F_\mathrm n$ and $F_\mathrm p$ are open Lipschitz domains such that if we let 
\begin{equation*}
\Omega^\star := \Omega \setminus \overline{F_\mathrm p},
\end{equation*}
then 
$F_\mathrm p \subset \Omega$ and $\overline{F_\mathrm n} \cap \overline{\Omega^\star} \subset \partial \Omega^\star$.
In particular, note that if $F_\mathrm p = \emptyset$ and $F=F_\mathrm n$, then $F$ is negative, while if $F_\mathrm n = \emptyset$ and $F=F_\mathrm p$, then $F$ is positive, as defined in Sec.~\ref{sec:defeatpb}. 

In this setting, the defeatured geometry is defined by $\Omega_0 := \mathrm{int}\left(\overline{\Omega^\star}\cup \overline{F_\mathrm n}\right)\subset\mathbb{R}^n$,
and as before, we also assume that $\Omega_0$ is an open Lipschitz domain. Note that it is in general not true that $\Omega^\star = \Omega \cap \Omega_0$ (see Fig.~\ref{fig:ex1ecomplex}), while it is true if $F$ is completely negative or positive. 

The considered problem in the exact geometry $\Omega$ is still the Poisson equation defined in (\ref{eq:originalpb}), for which we assume that ${\Gamma_D} \cap \left(\partial F_\mathrm n \cup \partial F_\mathrm p\right) = \emptyset$. Moreover, let
\begin{align*}
\gamma_0 := \text{int}\left(\overline{\gamma_{0,\mathrm n}} \cup \overline{\gamma_{0,\mathrm p}}\right) \subset \partial \Omega_0 &\quad \text{with} \quad \gamma_{0,\mathrm n} := \partial F_\mathrm n \setminus \partial \Omega^\star, \quad \gamma_{0,\mathrm p}:= \partial F_\mathrm p \setminus \partial \Omega, \\
\gamma :=\text{int}\left(\overline{\gamma_{\mathrm n}} \cup \overline{\gamma_{\mathrm p}}\right) \subset \partial \Omega &\quad \text{with} \quad \gamma_\mathrm n:= \partial F_\mathrm n\setminus \overline{\gamma_{0,\mathrm n}}, \quad \gamma_\mathrm p:= \partial F_\mathrm p\setminus \overline{\gamma_{0,\mathrm p}}.
\end{align*} 
so that $\partial F_\mathrm n = \overline{\gamma_\mathrm n} \cup \overline{\gamma_{0,\mathrm n}}$ with $\gamma_\mathrm n\cap \gamma_{0,\mathrm n}=\emptyset$, and $\partial F_\mathrm p = \overline{\gamma_\mathrm p} \cup \overline{\gamma_{0,\mathrm p}}$ with $\gamma_\mathrm p\cap \gamma_{0,\mathrm p}=\emptyset$ (see Fig.~\ref{fig:exfeatcomplex}). 

Similarly to the negative feature case, consider any $L^2$-extension of the restriction $f\vert_{\Omega^\star}$ in the negative component $F_\mathrm n$ of $F$, that we still write $f\in L^2(\Omega_0)$ by abuse of notation. Then instead of (\ref{eq:originalpb}), we solve the defeatured (or simplified) problem (\ref{eq:simplpb}) whose weak formulation is given in (\ref{eq:weaksimplpb}). As previously, we are interested in controlling the energy norm of the defeaturing error, which we suitably define in what follows. 

Similarly to the positive feature case, the solution $u_0$ of the defeatured problem is not defined everywhere on $\Omega$ since $F_\mathrm p \setminus \overline{F_\mathrm n} \not\subset \Omega_0$ but $F_\mathrm p \setminus \overline{F_\mathrm n} \subset \Omega$. Therefore, following the same rationale for $F_\mathrm p$ as the one exposed in Sec.~\ref{sec:defeatpb}, let $\tilde{F}_\mathrm p\subset \mathbb{R}^n$ be a Lipschitz domain that contains $F_\mathrm p$ and such that $\gamma_{0,\mathrm p} \subset \left(\partial \tilde{F}_\mathrm p \cap \partial F_\mathrm p\right)$, that is, $\tilde F_\mathrm p$ is a suitable (simple) domain extension of $F_\mathrm p$ such as the bounding box of $F_\mathrm p$ for example. Let us also assume that $\tilde F_\mathrm p \setminus \overline{F_\mathrm p}$ is Lipschitz, and consider any $L^2$-extension of $f$ in $\tilde F_\mathrm p$, that we still write $f$ by abuse of notation. Let $\tilde{\mathbf{n}}$ be the unitary outward normal of $\tilde F_\mathrm p$, let $\tilde \gamma := \partial \tilde F_\mathrm p \setminus \partial F_\mathrm p$, and let $\gamma_\mathrm p$ be decomposed as $\gamma_\mathrm p = \text{int}(\overline{\gamma_\intersign}\cup\overline{\gamma_\setminussign})$, where $\gamma_\intersign$ and $\gamma_\setminussign$ are open, $\gamma_\intersign$ is the part of $\gamma_\mathrm p$ that is shared with $\partial \tilde F_\mathrm p$ while $\gamma_\setminussign$ is the remaining part of $\gamma_\mathrm p$, that is, the part that does not belong to $\partial \tilde F_\mathrm p$, see Fig.~\ref{fig:twofeatboundaries}. 

\begin{figure}
	\centering
	\begin{subfigure}[b]{0.48\textwidth}
		\begin{center}
			\begin{tikzpicture}[scale=5.5]
			\fill[gray!50!white, fill, opacity=0.3] (2.4,0.8) rectangle (2.7,1.00);
			\fill[gray, fill, opacity=0.3] (2.3,1) -- (2.6,1) -- (2.6,1) arc (0:90:0.3) -- cycle;
			\draw[gray] (2.55,0.9)node{$F_\mathrm n$};
			\draw[gray] (2.425,1.125)node{$F_\mathrm p$};
			\draw (2,0.7) -- (2,1) ;
			\draw (3,0.7) -- (3,1) ;
			\draw (2,1) -- (2.3,1) ;
			\draw (2.7,1) -- (3,1) ;
			\draw[blue,thick] (2.4,1) -- (2.4,0.8) ;
			\draw[blue,thick] (2.7,1) -- (2.7,0.8) ;
			\draw[blue,thick] (2.4,0.8) -- (2.7,0.8) ;
			\draw[cyan,thick] (2.3,1) -- (2.3,1.3) ;
			\draw[cyan,thick] (2.6,1) arc (0:90:0.3); 
			\draw[cyan,thick] (2.4,1) -- (2.6,1) ;
			\draw (2.25,0.8) node{$\Omega$} ;
			\draw[cyan,thick] (2.55,1.2) node[above]{$\gamma_\mathrm p$} ;
			\draw[blue,thick] (2.55,0.8) node[below]{$\gamma_\mathrm n$} ;
			\draw (2.41,0.99) -- (2.39,1.01);
			\end{tikzpicture}
			\caption{Domain $\Omega$ with feature $F$ that has non-empty positive and negative components $F_\mathrm p$ and $F_\mathrm n$.}
			\label{fig:exa}
		\end{center}
	\end{subfigure}
	~
	\begin{subfigure}[b]{0.48\textwidth}
		\begin{center}
			\begin{tikzpicture}[scale=5.5]
			\fill[pattern=north west lines, pattern color=green!50!black, opacity=0.3] (2.3,1) rectangle (2.6,1.3);
			\draw[green!30!black] (2.45,1.15)node{$\tilde F_\mathrm p$};
			\draw (2,0.7) -- (2,1) ;
			\draw (3,0.7) -- (3,1) ;
			\draw (2,1) -- (2.3,1) ;
			\draw (2.7,1) -- (3,1) ;
			\draw[green!40!black,thick] (2.3,1.3) -- (2.6,1.3) ;
			\draw[green!40!black,thick] (2.6,1.3) -- (2.6,1) ;
			\draw[red,thick] (2.3,1) -- (2.4,1) ;
			\draw (2.3,1) -- (2.3,1.3) ;
			\draw[cyan,thick] (2.6,1) arc (0:90:0.3); 
			\draw[orange,thick] (2.7,1) -- (2.4,1) ;
			\draw (2.5,0.8) node{$\Omega_0$} ;
			\draw[red] (2.35,1) node[below]{$\gamma_{0,\mathrm p}$} ;
			\draw[orange] (2.55,1) node[below]{$\gamma_{0,\mathrm n}$} ;
			\draw[green!40!black,thick] (2.5,1.3) node[above]{$\tilde \gamma$} ;
			\draw[cyan] (2.55,1.16) node[above]{$\gamma_\setminussign$} ;
			\draw (2.4,0.99) -- (2.4,1.01);
			\draw (2.7,0.99) -- (2.7,1.01);
			\end{tikzpicture}
			\caption{Simplified domain $\Omega_0$, extension $\tilde F_\mathrm p$ of the positive component of the feature, and different boundaries.}
			\label{fig:exb}
		\end{center}
	\end{subfigure}
	\caption{Example of a geometry with a feature whose positive and negative components share a part of the boundary.} \label{fig:twofeatboundaries}
\end{figure}
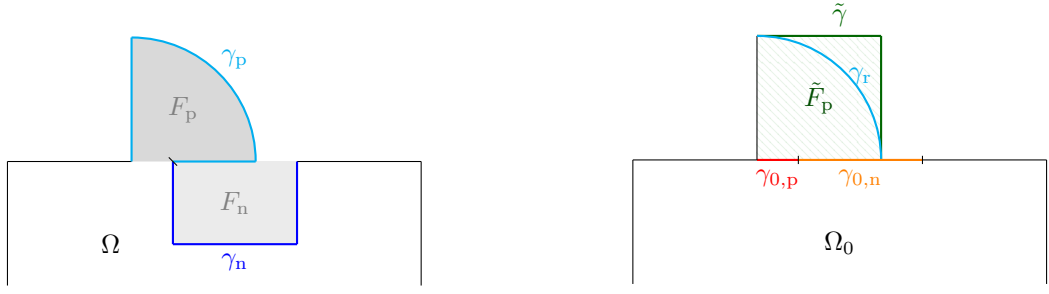

Therefore, and as for the positive feature case, we can consider the extension of the solution $u_0$ of (\ref{eq:simplpb}) on $\tilde F_\mathrm p$, called $\tilde{u}_0\in H^1_{u_0,\gamma_{0,\mathrm p}}\left(\tilde{F}_\mathrm p\right)$ and defined as the weak solution of (\ref{eq:featurepb}) where $F$, $\tilde F$ and $\gamma_0$ are replaced by $F_\mathrm p$, $\tilde F_\mathrm p$ and $\gamma_{0,\mathrm p}$, respectively. Now, we can define the extended defeatured solution $u_\mathrm d\in H^1_{h,\Gamma_D}\left(\Omega\right)$ as
\begin{equation}\label{eq:defudgen}
u_\mathrm d = u_0\vert_{\Omega_0 \setminus \overline{F_\mathrm n}} \text{ in } \Omega^\star = \Omega\setminus\overline{F_\mathrm p} \quad \text{ and } \quad u_\mathrm d = \tilde{u}_0\vert_{F_\mathrm p} \text{ in } F_\mathrm p. 
\end{equation}
Then the defeaturing error is defined by $\left|u-u_\mathrm d\right|_{1,\Omega}$. 

\begin{remark} Note that if $F_\mathrm n \cap F_\mathrm p \neq \emptyset$, it may happen that $u_0\neq \tilde u_0$ on $F_\mathrm n \cap F_\mathrm p$. But in this case,  on $F_\mathrm n \cap F_\mathrm p$, the definition of $u_\mathrm d$ in (\ref{eq:defudgen}) specifies that $u_\mathrm d = \tilde u_0$. 
\end{remark}

\noindent In this setting, we suppose that $\gamma_\mathrm n$, $\gamma_{0,\mathrm p}$ and $\gamma_\setminussign$ are isotropic according to Definition \ref{as:isotropy}, where the diameter and the convex hull of $\gamma_\mathrm n$, $\gamma_{0,\mathrm p}$ and $\gamma_\setminussign$ are considered in the manifolds $\partial \Omega$, $\partial \Omega_0$ and $\partial F$, respectively (see Sec.~\ref{sec:notation}). Note that, as before, the considered boundaries can be non-connected sub-manifolds. Finally, for further use, let $\Sigma := \{\gamma_{\mathrm n}, \gamma_{\setminussign}, \gamma_{0,\mathrm p} \}$ and let $d_\Sigma$ be defined piecewise as $d_\Sigma|_{\sigma} := d_\sigma$ for all $\sigma \in\Sigma$, with
\begin{equation} \label{eq:dsigma}
d_\sigma := \begin{cases}\vspace{1mm}
g - \displaystyle\frac{\partial u_\mathrm d}{\partial \mathbf n} & \text{if } \sigma = \gamma_\mathrm n \text{ or } \sigma = \gamma_\setminussign\\\vspace{1mm}
- \left(g_0 + \displaystyle\frac{\partial u_\mathrm d}{\partial \mathbf n_F}\right) & \text{if } \sigma = \gamma_{0,\mathrm p}.
\end{cases}
\end{equation}
{That is, $d_\sigma$ is the error term on the Neumann data for $\sigma = \gamma_\mathrm n$ or $\sigma = \gamma_\setminussign$, and $d_{\gamma_{0,\mathrm p}}$ is the jump in the normal derivative of $u_\mathrm d$ due to the Dirichlet extension of $u_\mathrm d$ in the positive component of the feature.}

\subsection{Complex feature \textit{a posteriori }defeaturing error estimator} \label{sec:complexest}
Recalling the definition of the defeaturing solution $u_\mathrm d$ in (\ref{eq:defudgen}) and of $\Sigma$ and $d_\sigma$ in (\ref{eq:dsigma}), we define the defeaturing error estimator as
\begin{align}
\mathcal E (u_\mathrm d) :=  &\left[ \sum_{\sigma \in \Sigma} \left( \left|\sigma\right|^{\frac{1}{n-1}} \left\|d_\sigma - \overline{d_\sigma}^{\sigma}\right\|_{0,\sigma}^2 + \,c_{\sigma}^2 \left|\sigma\right|^{\frac{n}{n-1}} \left| \overline{d_\sigma}^{\sigma} \right|^2\right) \right]^\frac{1}{2},\label{eq:genestimator}
\end{align}
where $c_{\sigma}$ is defined as in (\ref{eq:negconstant}). 

\begin{remark}
	If $F$ is a negative feature, then $\mathcal E(u_\mathrm d) = \mathcal E_\mathrm n(u_0)$ where $\mathcal E_\mathrm n(u_0)$ is defined in (\ref{eq:negestimator}), while if $F$ is a positive feature, then 
	$\mathcal E(u_\mathrm d) = \mathcal E_\mathrm p(\tilde u_0)$ where
	\begin{align*}
	\mathcal E_{\mathrm p} (\tilde u_0) :=  &\left( \left|\gamma_0\right|^{\frac{1}{n-1}} \left\|\left(g_0 + \frac{\partial \tilde u_0}{\partial \mathbf{n}_F}\right) - \overline{\left(g_0 + \frac{\partial \tilde u_0}{\partial \mathbf n_F}\right)}^{\gamma_0}\right\|_{0,\gamma_0}^2 \right. \\
	& \quad + \left|\gamma_\setminussign\right|^{\frac{1}{n-1}} \left\|\left(g - \frac{\partial \tilde u_0}{\partial \mathbf{n}_F}\right) - \overline{\left(g - \frac{\partial \tilde u_0}{\partial \mathbf n_F}\right)}^{\gamma_\setminussign}\right\|_{0,\gamma_\setminussign}^2 \nonumber \\
	& \quad \left. + \,c_{\gamma_0}^2 \left|\gamma_0\right|^{\frac{n}{n-1}} \left| \overline{\left(g_0 + \frac{\partial \tilde u_0}{\partial \mathbf n_F}\right)}^{\gamma_0} \right|^2 + c_{\gamma_\setminussign}^2 \left|\gamma_\setminussign\right|^{\frac{n}{n-1}}\left| \overline{\left(g - \frac{\partial \tilde u_0}{\partial \mathbf n_F}\right)}^{\gamma_\setminussign} \right|^2\,\right)^\frac{1}{2}.
	\end{align*}
\end{remark}

In this section, we first show that the quantity $\mathcal{E}(u_\mathrm d)$ is a reliable estimator for the defeaturing error, i.e., it is an upper bound for the defeaturing error (see Theorem \ref{thm:genupperbound}). Then, assuming that $\gamma_{\mathrm n}$, $\gamma_\setminussign$ and $\gamma_{0,\mathrm p}$ are also regular according to Definition \ref{as:pwsmoothshapereg}, and under mild assumptions for the two-dimensional case, we show that it is also efficient (up to oscillations), i.e., it is a lower bound for the defeaturing error up to oscillations (see Theorem \ref{thm:genlowerbound}). 

\begin{remark} \label{rmk:compatcondgen}
	Consider the simplified extended problem (\ref{eq:featurepb}) restricted to $F_\mathrm p$ and then to $\tilde F_\mathrm p\setminus F_\mathrm p$, with the natural Neumann boundary condition on $\gamma_{0,\mathrm p}$ and $\gamma_\setminussign$ respectively, in a similar way to (\ref{eq:simplpbrestricted}). By abuse of notation and as previously, we omit the explicit restriction of $\tilde u_0$ to $F_\mathrm p$ or to $\tilde F_\mathrm p\setminus F_\mathrm p$.  Then if we multiply the restricted problems by the constant function $1$ and integrate by parts, we obtain
	\begin{align*}
	&\int_{F_\mathrm p} f\, \mathrm dx + \int_{\gamma_\mathrm p} g \,\mathrm ds + \int_{\gamma_{0,\mathrm p}} \frac{\partial \tilde u_0}{\partial \mathbf n_F}\,\mathrm ds = 0, \\
	\text{ and }\quad &\int_{\tilde F_\mathrm p\setminus F_\mathrm p} f\, \mathrm dx + \int_{\tilde \gamma} \tilde g \,\mathrm ds - \int_{\gamma_\setminussign} \frac{\partial \tilde u_0}{\partial \mathbf n_F}\,\mathrm ds = 0.
	\end{align*}
	Consequently, 
	\begin{align*}
	\overline{d_{\gamma_{0,\mathrm p}}}^{\gamma_{0,\mathrm p}} &= \overline{\left(g_0+\frac{\partial \tilde u_0}{\partial \mathbf n_F}\right)}^{\gamma_{0,\mathrm p}} = \frac{1}{\left|\gamma_{0,\mathrm p}\right|}\left( \int_{\gamma_{0,\mathrm p}} g_0\,\mathrm ds - \int_{\gamma_\mathrm p} g\,\mathrm ds - \int_{F_\mathrm p} f\,\mathrm dx\right), \nonumber \\
	\overline{d_{\gamma_\setminussign}}^{\gamma_\setminussign} &= \overline{\left(g-\frac{\partial \tilde u_0}{\partial \mathbf n_F}\right)}^{\gamma_\setminussign} = \frac{1}{\left|\gamma_\setminussign\right|}\left( \int_{\gamma_\setminussign} g\,\mathrm ds - \int_{\tilde \gamma} \tilde g\,\mathrm ds - \int_{\tilde F_\mathrm p\setminus F_\mathrm p} f\,\mathrm dx\right). 
	\end{align*}
	Moreover, as in Remark \ref{rmk:compatcondneg}, it can be seen that 
	$$\overline{d_{\gamma_\mathrm n}}^{\gamma_\mathrm n} = \frac{1}{\left|\gamma_\mathrm n\right|}\left( \int_{\gamma_\mathrm n} g\,\mathrm ds - \int_{\gamma_{0,\mathrm n}} g_0\,\mathrm ds - \int_{F_\mathrm n} f\,\mathrm dx\right).$$
	Therefore, the terms involving the average values of $d_\sigma$ in the estimator $\mathcal{E}(u_\mathrm d)$ defined in (\ref{eq:genestimator}) only depend on the defeatured problem data. More precisely, they only depend on the choice of $g_0$ and $\tilde g$ that one chooses on $\gamma_0$ and $\tilde \gamma$ respectively, and on the choice of the extension of $f$ that one considers in the extended feature $\tilde F_\mathrm p$. 
	As a consequence, if those terms dominate, this means that the defeatured problem data should be better chosen. 
	Moreover, under the following reasonable flux conservation assumptions
	\begin{align}
	\int_{\gamma_{0,\mathrm p}} g_{0} \,\mathrm ds = \int_{\gamma_\mathrm p} g \,\mathrm{d}s + \int_{F_\mathrm p} f\,\mathrm dx, \qquad \int_{\tilde \gamma} \tilde g \,\mathrm ds = \int_{\gamma_{\setminussign}} g \,\mathrm{d}s - \int_{\tilde F_\mathrm p\setminus F_\mathrm p} f\,\mathrm dx,\nonumber \\
	\text{and } \qquad \int_{\gamma_{0,\mathrm n}} g_{0} \,\mathrm ds = \int_{\gamma_\mathrm n} g \,\mathrm{d}s - \int_{F_\mathrm n} f\,\mathrm dx, \label{eq:compatcondgen}
	\end{align}
	the defeaturing error estimator (\ref{eq:genestimator}) rewrites
	$\mathcal E(u_\mathrm d) := \left( \displaystyle\sum_{\sigma \in \Sigma} \left|\sigma\right|^{\frac{1}{n-1}} \left\| d_\sigma \right\|^2_{0,\sigma}\right)^\frac{1}{2}.$
	Conditions (\ref{eq:compatcondgen}) are easily met if the Neumann boundary condition $g$ and the source function $f$ are zero in the vicinity of the feature. 
\end{remark}

\begin{remark} \label{rmk:estgentilde}
	Analogously to the case of a negative feature in Remark \ref{rmk:estnegtilde}, note that
	\begin{align*}
	\mathcal{E}(u_\mathrm d) \lesssim \left( \displaystyle\sum_{\sigma\in\Sigma} c_\sigma^2 \left|\sigma\right|^{\frac{1}{n-1}} \left\| d_\sigma \right\|^2_{0,\sigma}\right)^\frac{1}{2} =: \tilde{\mathcal{E}}(u_\mathrm d). 
	\end{align*}
	One could be tempted to use the simpler indicator $\tilde{\mathcal{E}}(u_\mathrm d)$, but when $n=2$ and under the flux conservation conditions (\ref{eq:compatcondgen}), $\tilde{\mathcal{E}}\left(u_\mathrm d\right)$ is sub-optimal since in this case, $\tilde{\mathcal{E}}(u_\mathrm d) \lesssim \displaystyle\max_{\sigma\in\Sigma}\left( c_{\sigma} \right) {\mathcal{E}}(u_\mathrm d)$. Indeed, no lower bound can be proven for $\tilde{\mathcal{E}}(u_\mathrm d)$. 
\end{remark}

\subsubsection{Upper bound}
In this section, we prove that the error indicator defined in (\ref{eq:genestimator}) is reliable, that is, it is an upper bound for the defeaturing error. 

\begin{theorem} \label{thm:genupperbound}
	Let $u_\mathrm d$ be the defeaturing solution as defined in (\ref{eq:defudgen}). If $\gamma_{\mathrm n}$, $\gamma_\setminussign$ and $\gamma_{0,\mathrm p}$ are isotropic according to Definition \ref{as:isotropy}, then the defeaturing error in energy norm is bounded in terms of the estimator $\mathcal{E}(u_\mathrm d)$ introduced in (\ref{eq:genestimator}) as follows:
	\begin{equation*}
	\left|u-u_\mathrm d\right|_{1,\Omega} \lesssim \mathcal{E}(u_\mathrm d).
	\end{equation*}
\end{theorem}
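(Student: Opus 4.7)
The plan is to generalize the argument of Theorem \ref{thm:upperbound} by treating the three interface pieces in $\Sigma$ simultaneously. First I would check that $e:=u-u_\mathrm d\in H^1_{0,\Gamma_D}(\Omega)$: on the interior interface $\gamma_{0,\mathrm p}$ the Dirichlet condition $\tilde u_0 = u_0$ from (\ref{eq:featurepb}) makes the two pieces of $u_\mathrm d$ glue continuously, so $u_\mathrm d\in H^1(\Omega)$ and vanishes on $\Gamma_D$ together with $u$.

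Next I would split the defeatured energy according to $\Omega = \Omega^\star\cup\overline{F_\mathrm p}$,
\begin{equation*}
\int_\Omega \nabla u_\mathrm d\cdot\nabla e\,\mathrm dx = \int_{\Omega^\star}\nabla u_0\cdot\nabla e\,\mathrm dx + \int_{F_\mathrm p}\nabla\tilde u_0\cdot\nabla e\,\mathrm dx,
\end{equation*}
and integrate by parts on each sub-domain using $-\Delta u_0=f$ on $\Omega^\star$ and $-\Delta\tilde u_0=f$ on $F_\mathrm p$, inherited from (\ref{eq:simplpb}) and (\ref{eq:featurepb}). The boundary of $\Omega^\star$ consists of $\partial\Omega\setminus\overline{\gamma_\mathrm p}$, where $u_0$ carries the Neumann data of (\ref{eq:simplpb}), together with $\gamma_{0,\mathrm p}$, where it carries $g_0$; the boundary of $F_\mathrm p$ is $\overline{\gamma_{0,\mathrm p}}\cup\overline{\gamma_\intersign}\cup\overline{\gamma_\setminussign}$, and $\tilde u_0$ is prescribed only on $\gamma_\intersign$, with datum $g$. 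Subtracting the weak form (\ref{eq:weakoriginalpb}) of $u$ cancels the bulk integrals of $fe$ as well as the Neumann contributions on $\Gamma_N\setminus\overline\gamma$ and on $\gamma_\intersign$. Reconciling the outward normals ($\mathbf n=-\mathbf n_F$ on $\gamma_\mathrm n$, $\mathbf n=\mathbf n_F$ on $\gamma_\setminussign$, and $\mathbf n_{\Omega^\star}=-\mathbf n_F$ on $\gamma_{0,\mathrm p}$) then produces the analog of (\ref{eq:negrewriteerror}) for a complex feature,
\begin{equation*}
|e|_{1,\Omega}^2 = \sum_{\sigma\in\Sigma}\int_\sigma d_\sigma e\,\mathrm ds,
\end{equation*}
with the $d_\sigma$ exactly as in (\ref{eq:dsigma}).

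From this identity I would copy, term by term, the two estimates of Theorem \ref{thm:upperbound}. For each $\sigma\in\Sigma$ I split $d_\sigma=(d_\sigma-\overline{d_\sigma}^\sigma)+\overline{d_\sigma}^\sigma$; the zero-mean part is bounded by Cauchy--Schwarz and the boundary Poincaré inequality of Lemma \ref{lemma:poincarebd}, giving a factor $|\sigma|^{1/(2(n-1))}\|d_\sigma-\overline{d_\sigma}^\sigma\|_{0,\sigma}$ times an $H^{1/2}$-seminorm of $e$, and the mean part is bounded via Lemma \ref{lemma:savare}, giving a factor $c_\sigma|\sigma|^{n/(2(n-1))}|\overline{d_\sigma}^\sigma|$ times $\|e\|_{1/2}$ on the appropriate boundary. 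The trace step is carried out on $\partial\Omega$ when $\sigma=\gamma_\mathrm n$ and on $\partial F_\mathrm p$ when $\sigma\in\{\gamma_\setminussign,\gamma_{0,\mathrm p}\}$, producing in each case a bound by $|e|_{1,\Omega}$. Summing the three contributions with discrete Cauchy--Schwarz and simplifying one factor of $|e|_{1,\Omega}$ on both sides delivers the claim $|u-u_\mathrm d|_{1,\Omega}\lesssim\mathcal E(u_\mathrm d)$.

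The main obstacle is the careful bookkeeping in the integration-by-parts step: three interface pieces with different orientation conventions must be combined, and $\gamma_{0,\mathrm p}$ in particular appears simultaneously as a piece of $\partial\Omega^\star$ and of $\partial F_\mathrm p$ with opposite outward normals, so that the jump $g_0+\partial\tilde u_0/\partial\mathbf n_F$ defining $d_{\gamma_{0,\mathrm p}}$ is recovered only after this matching. Once the identity for $|e|_{1,\Omega}^2$ is in place, the analytic estimates are precise replicas of those proved in the negative case.
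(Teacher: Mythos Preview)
Your proposal is correct and follows essentially the same route as the paper: derive the error representation $|e|_{1,\Omega}^2=\sum_{\sigma\in\Sigma}\int_\sigma d_\sigma e\,\mathrm ds$ by splitting $\Omega=\Omega^\star\cup F_\mathrm p$ and integrating by parts on each piece, then bound each term via the mean/zero-mean decomposition using Lemmas~\ref{lemma:poincarebd} and~\ref{lemma:savare} together with trace inequalities. The only differences are cosmetic: the paper obtains the identity by restricting \emph{both} $u$ and $u_\mathrm d$ to $\Omega^\star$ and to $F_\mathrm p$ and subtracting on each subdomain (equations (\ref{eq:multieinter}) and (\ref{eq:multieFi})), whereas you integrate $u_\mathrm d$ by parts and subtract the global weak form of $u$; and the paper traces $\gamma_\mathrm n,\gamma_{0,\mathrm p}$ on $\partial\Omega^\star$ and $\gamma_\setminussign$ on $\partial\Omega$, while you trace $\gamma_\mathrm n$ on $\partial\Omega$ and $\gamma_{0,\mathrm p},\gamma_\setminussign$ on $\partial F_\mathrm p$---both choices are valid and lead to the same conclusion.
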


\begin{proof}
	Using arguments similar to Theorem \ref{thm:upperbound}, let us first consider the original problem (\ref{eq:originalpb}) restricted to $\Omega^\star:=\Omega\setminus \overline{F_\mathrm p}$ with the natural Neumann boundary condition on $\gamma_{0,\mathrm p}$, that is the restriction $u\vert_{\Omega^\star} \in H^1_{h,\Gamma_D}(\Omega^\star)$ is the weak solution of
	\begin{align}\label{eq:originalpbrestricted}
	\begin{cases}
	-\Delta \left(u\vert_{\Omega^\star}\right) = f &\text{ in } \Omega^\star \\
	u\vert_{\Omega^\star} = h &\text{ on } \Gamma_D \\
	\displaystyle\frac{\partial \left(u\vert_{\Omega^\star}\right)}{\partial \mathbf{n}} = g &\text{ on } \Gamma_N\setminus \gamma_\mathrm p \vspace{0.1cm} \\
	\displaystyle\frac{\partial \left(u\vert_{\Omega^\star}\right)}{\partial \mathbf{n}_0} =	\displaystyle\frac{\partial u}{\partial \mathbf{n}_0}  &\text{ on } \gamma_{0,\mathrm p}.
	\end{cases}
	\end{align}
	By abuse of notation, we omit the explicit restriction of $u$ to $\Omega^\star$. Then for all $v_0\in H^1_{0,\Gamma_D}(\Omega^\star)$, 
	\begin{equation}\label{eq:weakoriginalpb0}
	\int_{\Omega^\star} \nabla u \cdot \nabla v_0 \,\mathrm dx = \int_{\Omega^\star} fv_0 \,\mathrm dx + \int_{\Gamma_N\setminus \gamma_\mathrm p} g v_0 \,\mathrm ds + \int_{\gamma_{0,\mathrm p}} \frac{\partial u}{\partial \mathbf{n}_0} v_0 \,\mathrm ds.
	\end{equation}
	Then, let us consider the simplified problem (\ref{eq:simplpb}) restricted to $\Omega^\star$ with the natural Neumann boundary condition on $\gamma_\mathrm n$, in the same way as in (\ref{eq:simplpbrestricted}). Thus, since by definition $u_\mathrm d\vert_{\Omega^\star} = u_0\vert_{\Omega^\star}$, and if we omit the explicit restriction of $u_\mathrm d$ to $\Omega^\star$, for all $v_0\in H^1_{0,\Gamma_D}(\Omega^\star)$, 
	\begin{equation}\label{eq:weaksimplpb0}
	\int_{\Omega^\star} \nabla u_\mathrm d \cdot \nabla v_0 \,\mathrm dx = \int_{\Omega^\star} fv_0 \,\mathrm dx + \int_{\Gamma_N\setminus \gamma} g v_0 \,\mathrm ds + \int_{\gamma_{\mathrm n}} \frac{\partial u_\mathrm d}{\partial \mathbf{n}} v_0 \,\mathrm ds + \int_{\gamma_{0,\mathrm p}} g_0 v_0 \,\mathrm ds.
	\end{equation}
	Let $e:=u-u_\mathrm d \in H^1_{0,\Gamma_D}(\Omega)$. So from (\ref{eq:weakoriginalpb0}) and (\ref{eq:weaksimplpb0}), for all $v_0\in H^1_{0,\Gamma_D}(\Omega^\star)$, we obtain
	\begin{equation}\label{eq:multieinter}
	\int_{\Omega^\star} \nabla e\cdot\nabla v_0 \,\mathrm dx = \int_{\gamma_\mathrm n} \left(g-\frac{\partial u_\mathrm d}{\partial \mathbf n}\right)v_0\,\mathrm ds + \int_{\gamma_{0,\mathrm p}} \left(\frac{\partial u}{\partial \mathbf n_0}- g_0\right) v_0\,\mathrm ds.
	\end{equation}
	Now, let us consider the simplified extended problem (\ref{eq:featurepb}) restricted to $F_\mathrm p$ with the natural Neumann boundary condition on $\gamma_\setminussign$, in a similar way to (\ref{eq:originalpbrestricted}). Note that $u_\mathrm d\vert_{F_\mathrm p} = \tilde u_0\vert_{F_\mathrm p}$, and by abuse of notation and as previously, we omit the explicit restriction of $u_\mathrm d$ to $F_\mathrm p$. That is, $u_\mathrm d\in H^1\big(F_\mathrm p\big)$ is one of the infinitely-many solutions (up to a constant) of
	\begin{equation}\label{eq:weakfeatpbrestricted}
	\int_{F_\mathrm p} \nabla u_\mathrm d \cdot \nabla v_\mathrm p \,\mathrm dx = \int_{F_\mathrm p} fv_\mathrm p \,\mathrm dx + \int_{\gamma_\intersign} g v_\mathrm p \,\mathrm ds + \int_{\gamma_{0,\mathrm p}\cup\gamma_\setminussign} \frac{\partial u_\mathrm d}{\partial \mathbf{n}_F} v_\mathrm p\,\mathrm ds, \qquad \forall v_\mathrm p\in H^1\big(F_\mathrm p\big). 
	\end{equation}
	And let us consider the original problem (\ref{eq:originalpb}) restricted to $F_\mathrm p$ with the natural Neumann boundary condition on $\gamma_{0,\mathrm p}$, again in a similar way to (\ref{eq:originalpbrestricted}). By abuse of notation and as previously, we omit the explicit restriction of $u$ to $F_\mathrm p$. 
	So $u\in H^1\big(F_\mathrm p\big)$ is one of the infinitely-many solutions (up to a constant) of
	\begin{equation}\label{eq:weakoriginalpb1}
	\int_{F_\mathrm p} \nabla u \cdot \nabla v_\mathrm p \,\mathrm dx = \int_{F_\mathrm p} fv_\mathrm p \,\mathrm dx + \int_{\gamma_\mathrm p} gv_\mathrm p \,\mathrm dx + \int_{\gamma_{0,\mathrm p}} \frac{\partial u}{\partial \mathbf n_F} v_\mathrm p \,\mathrm ds, \qquad \forall v_\mathrm p\in H^1\big(F_\mathrm p\big).
	\end{equation}
	Consequently, from (\ref{eq:weakfeatpbrestricted}) and (\ref{eq:weakoriginalpb1}), for all $v_\mathrm p\in H^1\big(F_\mathrm p\big)$,
	\begin{equation} \label{eq:multieFi}
	\int_{F_\mathrm p} \nabla e\cdot\nabla v_\mathrm p \,\mathrm dx = \int_{\gamma_{0,\mathrm p}} \frac{\partial \left(u-u_\mathrm d\right)}{\partial\mathbf n_F}v_\mathrm p\,\mathrm ds + \int_{\gamma_\setminussign}\left(g-\frac{\partial u_\mathrm d}{\partial \mathbf n}\right) v_\mathrm p\,\mathrm ds.
	\end{equation}
	Let $v\in H^1_{0,\Gamma_D}(\Omega)$, then $v\vert_{\Omega^\star} \in H_{0,\Gamma_D}^1(\Omega^\star)$ and $v\vert_{F_\mathrm p}\in H^1\big(F_\mathrm p\big)$. 
	Therefore, from equations (\ref{eq:multieinter}) and (\ref{eq:multieFi}), since $\mathbf n_0 = - \mathbf n_F$ on $\gamma_{0,\mathrm p}$ and recalling the definitions of $\Sigma$ and $d_\sigma$ in (\ref{eq:dsigma}), we obtain
	\begin{align}
	\int_{\Omega} \nabla e\cdot \nabla v \,\mathrm{d}x = \sum_{\sigma \in \Sigma} \int_\sigma d_\sigma v\,\mathrm ds. \label{eq:rewriteerrormulti}
	\end{align}
	Now, if we take $v=e\in H_{0,\Gamma_D}^1(\Omega)$ in (\ref{eq:rewriteerrormulti}), then
	\begin{equation} \label{eq:posmeinesavg}
	|e|_{1,\Omega}^2 = \sum_{\sigma\in\Sigma} \int_\sigma d_\sigma e \,\mathrm ds = \sum_{\sigma\in\Sigma} \left[ \int_\sigma \left( d_\sigma -\overline{d_\sigma}^\sigma\right) \left(e-\overline{e}^\sigma\right) \,\mathrm ds + \overline{d_\sigma}^\sigma \int_\sigma e \,\mathrm ds \right].
	\end{equation}
	For each $\sigma\in\Sigma$, the first terms of (\ref{eq:posmeinesavg}) can be estimated as in (\ref{eq:12}), using Appendix \ref{lemma:poincarebd}, trace inequalities and the discrete Cauchy-Schwarz inequality. Thus we obtain
	\begin{align}
	\sum_{\sigma \in \Sigma} \int_{\sigma} \left(d_\sigma - \overline{d_\sigma}^{\sigma}\right) \left(e-\overline{e}^{\sigma}\right) \,\mathrm ds 
	\lesssim &\sum_{\sigma\in\Sigma} \left|\sigma\right|^{\frac{1}{2(n-1)}} \left\|d_\sigma - \overline{d_\sigma}^{\sigma}\right\|_{0,\sigma} |e|_{\frac{1}{2},\sigma} \nonumber\\
	\lesssim &\left|\gamma_\mathrm n\right|^{\frac{1}{2(n-1)}} \left\|d_{\gamma_\mathrm n} - \overline{d_{\gamma_\mathrm n}}^{\gamma_{\mathrm n}}\right\|_{0,\gamma_{\mathrm n}} \|e\|_{1,\Omega^\star} \nonumber \\
	& + \left(\sum_{\sigma\in\{ \gamma_{0,\mathrm p}, \gamma_\setminussign \}}\left|\sigma\right|^{\frac{1}{2(n-1)}} \left\|d_\sigma - \overline{d_\sigma}^{\sigma}\right\|_{0,\sigma}\right) \|e\|_{1,F_\mathrm p} \nonumber\\
	\lesssim & \left( \sum_{\sigma\in\Sigma} \left|\sigma\right|^{\frac{1}{n-1}} \left\|d_\sigma - \overline{d_\sigma}^{\sigma}\right\|^2_{0,\sigma} \right)^\frac{1}{2} |e|_{1,\Omega}. \label{eq:discrcs}
	\end{align}
	Moreover, for each $\sigma\in\Sigma$, the last terms of (\ref{eq:posmeinesavg}) can be estimated using Appendix \ref{lemma:savare}, trace inequalities and the discrete Cauchy-Schwarz inequality to obtain
	\begin{align} 
	\sum_{\sigma\in\Sigma} \overline{d_\sigma}^{\sigma} \int_{\sigma} e \,\mathrm ds \lesssim &\sum_{\sigma\in\Sigma} \left| \overline{d_\sigma}^{\sigma} \right| \left|\sigma\right|^\frac{1}{2} \|e\|_{0, \sigma} \nonumber \\
	\lesssim &\left( \sum_{\sigma\in \{\gamma_\mathrm n, \gamma_{0,\mathrm p}\}} \left| \overline{d_\sigma}^{\sigma} \right| c_{\sigma} \left|\sigma\right|^{\frac{1}{2(n-1)}+\frac{1}{2}} \right) \|e\|_{\frac{1}{2},\partial \Omega^\star} \nonumber \\
	& + \left| \overline{d_{\gamma_\setminussign}}^{\gamma_\setminussign} \right| c_{\gamma_\setminussign} \left|\gamma_\setminussign\right|^{\frac{1}{2(n-1)}+\frac{1}{2}} \|e\|_{\frac{1}{2},\partial \Omega} \nonumber \\
	\lesssim &\left( \sum_{\sigma \in \Sigma} c_{\sigma}^2 \left|\gamma_\sigma\right|^{\frac{n}{n-1}} \left| \overline{d_\sigma}^{\sigma} \right|^2\right)^\frac{1}{2} |e|_{1,\Omega}. \label{eq:poseq19}
	\end{align}
	Therefore, combining (\ref{eq:posmeinesavg}), (\ref{eq:discrcs}) and (\ref{eq:poseq19}), and simplifying on both sides, we obtain the desired result.
\end{proof}

\subsubsection{Lower bound}
In this section, we prove that the error indicator defined in (\ref{eq:genestimator}) is efficient, that is, it is a lower bound for the defeaturing error, up to oscillations. In the case $n=2$, the flux conservation assumptions (\ref{eq:compatcondgen}) are also required. 

\begin{theorem} \label{thm:genlowerbound}
	Consider the same notation and assumptions as in Theorem \ref{thm:genupperbound}, and assume that all $\sigma \in \Sigma$ are also regular according to Definition \ref{as:pwsmoothshapereg} with $\left|\gamma_\mathrm n\right| \simeq \left|\gamma_\setminussign\right| \simeq \left| \gamma_{0,\mathrm p} \right|$. 
	Suppose that either $n=3$, or $n=2$ and the flux conservation conditions (\ref{eq:compatcondgen}) are satisfied. Then the defeaturing error, in energy norm, bounds up to oscillations the estimator $\mathcal{E}(u_\mathrm d)$ introduced in (\ref{eq:genestimator}), that is
	\begin{equation*}
	\mathcal{E}(u_\mathrm d) \lesssim 
	\left|u-u_\mathrm d\right|_{1,\Omega} + \mathrm{osc}(u_\mathrm d),
	\end{equation*}
	where
	\begin{align}
	\mathrm{osc}(u_\mathrm d) &:=  \left|\Gamma\right|^\frac{1}{2(n-1)}  \left( \sum_{\sigma\in\Sigma} \left\| d_\sigma - \Pi_{m}(d_\sigma)\right\|^2_{0,\sigma}\right)^\frac{1}{2} \label{eq:oscgen}
	\end{align}
	for any $m\in \mathbb{N}$, with $\Gamma := \gamma_{\mathrm n} \cup \gamma_{\setminussign} \cup \gamma_{0,\mathrm p}$, $\Sigma$ and $d_\sigma$ defined as in (\ref{eq:dsigma}) and $\Pi_m$ such that $\Pi_{m}\vert_{\sigma} \equiv \Pi_{{m},\sigma}$ for all $\sigma\in\Sigma$, $\Pi_{{m},\sigma}$ being extensions of the Cl\'ement operator as defined in Sec.~\ref{sec:notation}. 
\end{theorem}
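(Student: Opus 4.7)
The plan is to lift the single-boundary argument of Theorem~\ref{thm:lowerbound} to the three-boundary setting $\Sigma = \{\gamma_{\mathrm n}, \gamma_{\setminussign}, \gamma_{0,\mathrm p}\}$ by treating each $\sigma\in\Sigma$ in isolation and assembling via discrete Cauchy--Schwarz. The starting point is identity (\ref{eq:rewriteerrormulti}) obtained inside the upper-bound proof, which gives
\[
\sum_{\sigma \in \Sigma} \int_\sigma d_\sigma v \,\mathrm ds = \int_\Omega \nabla e \cdot \nabla v \,\mathrm dx \leq |e|_{1,\Omega}\, |v|_{1,\Omega}, \qquad \forall v \in H^1_{0,\Gamma_D}(\Omega).
\]

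The first substantive step is to construct, for each $\sigma \in \Sigma$ and each $w \in H^{1/2}_{00}(\sigma)$, a localised lifting $u_w \in H^1_{0,\Gamma_D}(\Omega)$ whose trace equals $w$ on $\sigma$, vanishes on every $\sigma' \in \Sigma\setminus\{\sigma\}$, and satisfies $|u_w|_{1,\Omega} \lesssim \|w\|_{H^{1/2}_{00}(\sigma)}$. For $\sigma = \gamma_{\mathrm n} \subset \partial \Omega^\star$ and for $\sigma = \gamma_{\setminussign} \subset \partial F_{\mathrm p}$, $u_w$ is the harmonic extension of $w^\star$ in $\Omega^\star$, respectively in $F_{\mathrm p}$, prolonged by zero to the complementary subdomain. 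For $\sigma = \gamma_{0,\mathrm p}$, which is an interior sub-manifold, I solve two harmonic problems, one in $\Omega^\star$ and one in $F_{\mathrm p}$, each with Dirichlet datum the extension by zero of $w$ to the respective boundary; since both pieces have trace $w$ on the shared interface $\gamma_{0,\mathrm p}$, they glue into an $H^1(\Omega)$ function. Standard continuity of the harmonic extension on each piece yields the required energy bound.

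Plugging each such $u_w$ into (\ref{eq:rewriteerrormulti}) kills every boundary term except the one over $\sigma$, so the duality argument used in (\ref{eq:dualerrneg}) gives $\|d_\sigma\|_{H^{-1/2}_{00}(\sigma)} \lesssim |e|_{1,\Omega}$ for every $\sigma \in \Sigma$. I then invoke Remark~\ref{rmk:estgentilde} (for $n=3$) or Remark~\ref{rmk:compatcondgen} together with the flux conditions (\ref{eq:compatcondgen}) (for $n=2$) to reduce the estimator to $\mathcal E(u_\mathrm d)^2 \lesssim \sum_{\sigma\in\Sigma} |\sigma|^{1/(n-1)} \|d_\sigma\|_{0,\sigma}^2$. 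For each $\sigma$, Clément projection together with Lemmas~\ref{lemma:inverseineq}--\ref{lemma:inverseineq2} bridges from $L^2(\sigma)$ to $H^{-1/2}_{00}(\sigma)$ modulo oscillations, exactly as in (\ref{eq:lowerboundneg1})--(\ref{eq:lowerboundneg2}), producing
\[
|\sigma|^{\frac{1}{2(n-1)}} \|d_\sigma\|_{0,\sigma} \,\lesssim\, |e|_{1,\Omega} + |\sigma|^{\frac{1}{2(n-1)}} \|d_\sigma - \Pi_m(d_\sigma)\|_{0,\sigma}.
\]
Squaring, summing over $\sigma \in \Sigma$, and using the hypothesis $|\gamma_{\mathrm n}| \simeq |\gamma_{\setminussign}| \simeq |\gamma_{0,\mathrm p}| \simeq |\Gamma|$ to factor $|\Gamma|^{1/2(n-1)}$ out of the oscillation sum yields precisely the statement with $\mathrm{osc}(u_{\mathrm d})$ as in (\ref{eq:oscgen}).

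The most delicate point I expect is the lifting on the interior interface $\gamma_{0,\mathrm p}$: one must verify that extension by zero maps $H^{1/2}_{00}(\gamma_{0,\mathrm p})$ continuously into $H^{1/2}(\partial \Omega^\star)$ and $H^{1/2}(\partial F_{\mathrm p})$ with constants independent of the size of the feature, and that the two harmonic pieces indeed glue into an $H^1(\Omega)$ function so that the global bound $|u_w|_{1,\Omega} \lesssim \|w\|_{H^{1/2}_{00}(\gamma_{0,\mathrm p})}$ holds in a norm compatible with the one used to define $\|d_{\gamma_{0,\mathrm p}}\|_{H^{-1/2}_{00}(\gamma_{0,\mathrm p})}$. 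Once this is settled, the remainder is a mechanical, component-wise transcription of the negative-feature argument glued together by discrete Cauchy--Schwarz.
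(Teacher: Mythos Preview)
Your proposal is correct and follows the same strategy as the paper: start from the error representation (\ref{eq:rewriteerrormulti}), construct harmonic liftings piecewise on $\Omega^\star$ and $F_{\mathrm p}$ to bound a dual norm of the residual by $|e|_{1,\Omega}$, reduce the estimator via Remark~\ref{rmk:estgentilde} or~\ref{rmk:compatcondgen}, and close with Cl\'ement projection plus inverse inequalities. Your identification of the $\gamma_{0,\mathrm p}$ lifting as the delicate step is exactly right, and your two-sided harmonic extension is precisely what the paper does.

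The only organisational difference is that the paper packages the three boundaries into a single broken space $H := \{v \in H^{1/2}_{00}(\Gamma) : v|_\sigma \in H^{1/2}_{00}(\sigma),\ \forall\sigma\in\Sigma\}$ with norm $\|\cdot\|_H = (\sum_\sigma \|\cdot\|^2_{H^{1/2}_{00}(\sigma)})^{1/2}$, builds one lifting for a general $w\in H$ (your three localised liftings are the specialisations of this to $w$ supported on a single $\sigma$), and proves $\|d_\Sigma\|_{H^*} \lesssim |e|_{1,\Omega}$ in one shot; it then invokes the dedicated broken-norm inverse inequality of Lemma~\ref{lemma:inverseineqH} and Lemma~\ref{lemma:sumH0012} to pass between $H^*$ and $H^{-1/2}_{00}(\Gamma)$. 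Your component-wise route sidesteps these two auxiliary lemmas and uses only the single-boundary Lemmas~\ref{lemma:inverseineq}--\ref{lemma:inverseineq2}, at the cost of repeating the lifting argument three times. Both routes are equivalent; yours is marginally more elementary, the paper's marginally more compact.
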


\begin{proof}
	As before, let $e:=u-u_\mathrm d \in H^1_{0,\Gamma_D}(\Omega)$. Then from equation (\ref{eq:rewriteerrormulti}), for all $v\in H^1_{0,\Gamma_D}(\Omega)$, 
	\begin{equation} \label{eq:errOmega}
	\sum_{\sigma \in \Sigma} \int_\sigma d_\sigma v \,\mathrm ds = \int_{\Omega} \nabla e \cdot \nabla v \,\mathrm dx \leq |e|_{1,\Omega} |v|_{1,\Omega}.
	\end{equation}
	Now, let $H := \left\{ v\in H_{00}^\frac{1}{2}\left(\Gamma\right) : v|_\sigma \in H_{00}^\frac{1}{2}(\sigma), \text{ for all } \sigma\in\Sigma \right\}$, equipped with the norm $$\|\cdot\|_{H} := \left( \displaystyle\sum_{\sigma\in\Sigma} \|\cdot\|_{H_{00}^{1/2}(\sigma)}^2 \right)^\frac{1}{2},$$
	and let $H^*$ be its dual space equipped with the dual norm $\|\cdot\|_{H^*}$. Recall that $\Omega^\star := \Omega\setminus \overline{F_\mathrm p}$, so that $\Omega = \text{int}\left(\overline{\Omega^\star} \cup \overline{F_\mathrm p}\right)$. So for all $w\in H$, let us define piecewise the function $u_w\in H^1_{0,\partial \Omega\setminus\left(\gamma_\mathrm n \cup \gamma_\setminussign\right)}(\Omega)$ as the unique solution of
	\begin{align*}
	\begin{cases}
	-\Delta \left( u_w|_{F_\mathrm p} \right) = 0 & \text{in } F_\mathrm p \\
	u_w|_{F_\mathrm p} = \left(w\vert_{\gamma_\setminussign\cup\gamma_{0,\mathrm p}}\right)^\star & \text{on } \partial F_\mathrm p, 
	\end{cases}
	\qquad
	\begin{cases}
	-\Delta \left( u_w|_{\Omega^\star} \right) = 0 & \text{in } \Omega^\star \\
	u_w|_{\Omega^\star} = \left(w|_{\gamma_\mathrm n\cup \gamma_{0,\mathrm p}}\right)^\star & \text{on } \partial\Omega^\star, 
	\end{cases}
	\end{align*} 
	where $\left(w|_{\gamma_\setminussign\cup\gamma_{0,\mathrm p}}\right)^\star$ and $\left(w|_{\gamma_\mathrm n\cup \gamma_{0,\mathrm p}}\right)^\star$ are the extensions by $0$ of $w|_{\gamma_\setminussign\cup\gamma_{0,\mathrm p}}$ on $\partial F_\mathrm p$ and of $w|_{\gamma_\mathrm n\cup \gamma_{0,\mathrm p}}$ on $\partial\Omega^\star$, respectively.
	Then by continuity of the solution on the data and from Appendix \ref{lemma:sumH0012}, 
	\begin{align}
	|u_w|_{1,\Omega} &= \left( \left|u_w\right|^2_{1,F_\mathrm p} + \left|u_w\right|^2_{1,\Omega^\star} \right)^\frac{1}{2} \nonumber \\
	&\lesssim \left( \left\| \left(w\vert_{\gamma_\setminussign\cup\gamma_{0,\mathrm p}}\right)^\star \right\|^2_{\frac{1}{2}, \partial F_\mathrm p} + \left\| \left(w|_{\gamma_\mathrm n\cup \gamma_{0,\mathrm p}}\right)^\star \right\|^2_{\frac{1}{2},\partial \Omega^\star}\right)^\frac{1}{2} \nonumber \\
	& = \left( \|w\|_{H_{00}^{1/2}\left(\gamma_\setminussign\cup\gamma_{0,\mathrm p}\right)}^2 + \|w\|_{H_{00}^{1/2}\left(\gamma_\mathrm n\cup\gamma_{0,\mathrm p}\right)}^2 \right)^\frac{1}{2} \lesssim \|w\|_{H}. \label{eq:H00}
	\end{align}
	So, recalling that by definition, $d_\Sigma|_\sigma = d_\sigma$ on each $\sigma\in\Sigma$, thanks to (\ref{eq:errOmega}) and (\ref{eq:H00}) and since $H^1_{0,\partial \Omega\setminus\left(\gamma_\mathrm n \cup \gamma_\setminussign\right)}(\Omega)\subset H^1_{0,\Gamma_D}(\Omega)$, then
	\begin{align}
	\left\| d_\Sigma\right\|_{H^*} = \sup_{\substack{w\in H\\ w\neq 0}} \frac{\displaystyle \int_{\Gamma} d_\Sigma w \,\mathrm ds}{\|w\|_{H}}
	&\lesssim \sup_{\substack{w\in H\\ w\neq 0}} \frac{\displaystyle \sum_{\sigma\in\Sigma} \int_{\sigma} d_\sigma u_w\,\mathrm ds}{\left|u_w\right|_{1,\Omega}} \nonumber \\
	&\leq \sup_{\substack{v\in H^1_{0,\Gamma_D}\left(\Omega\right)\\ v\neq 0}} \frac{\displaystyle \sum_{\sigma\in\Sigma} \int_{\sigma} d_\sigma v\,\mathrm ds}{\left|v\right|_{1,\Omega}} \leq |e|_{1,\Omega}. \label{eq:decomperrdualOmega}
	\end{align}
	Moreover, using Remark \ref{rmk:estgentilde} if $n=3$, or Remark \ref{rmk:compatcondgen} if $n=2$ and if the flux conservation conditions (\ref{eq:compatcondgen}) are satisfied, then
	$$\mathcal{E}(u_\mathrm d) \lesssim \left( \sum_{\sigma\in\Sigma} |\sigma|^\frac{1}{n-1} \left\|d_\sigma\right\|_{0,\sigma}^2 \right)^\frac{1}{2}.$$
	Therefore, using the triangle inequality, and the fact that $\left|\gamma_\mathrm n\right| \simeq \left|\gamma_\setminussign\right| \simeq \left| \gamma_{0,\mathrm p} \right| \simeq |\Gamma|$, then
	\begin{align*}
	\mathcal E(u_\mathrm d)^2 &\leq \sum_{\sigma \in \Sigma} |\sigma|^\frac{1}{n-1} \left\|\Pi_{m}(d_\sigma)\right\|^2_{0,\sigma} +  \sum_{\sigma \in \Sigma} \left\|d_\sigma - \Pi_{m}(d_\sigma)\right\|^2_{0,\sigma} \\
	& \lesssim |\Gamma|^\frac{1}{n-1} \left\| \Pi_{m}(d_\Sigma)\right\|^2_{0,{\Gamma}} + |\Gamma|^\frac{1}{n-1} \left\|d_\Sigma - \Pi_{m}(d_\Sigma)\right\|^2_{0,{\Gamma}}.
	\end{align*}
	Now, we use the definition of the broken norm in $H^*$ to apply the inverse inequality of Appendix \ref{lemma:inverseineqH}. Recalling the definition (\ref{eq:oscgen}) of the oscillations, and using again the triangle inequality, we thus obtain
	\begin{align} 
	\mathcal E(u_\mathrm d)^2 &\lesssim \left\|\Pi_m \left( d_\Sigma\right)\right\|_{H^*}^{{2}} + \text{osc}\left(u_\mathrm d\right)^2 \nonumber \\
	&\lesssim \big[ \left\|d_\Sigma\right\|_{H^*} + \left\|\Pi_m \left( d_\Sigma\right) - d_\Sigma\right\|_{H^*} + \text{osc}\left(u_\mathrm d\right) \big]^2.\label{eq:finalestimation}
	\end{align}
	Furthermore, applying Appendix \ref{lemma:sumH0012} and then Appendix \ref{lemma:inverseineq2}, we have
	\begin{align}
	\left\|\Pi_m \left( d_\Sigma\right) - d_\Sigma\right\|_{H^*} &\lesssim \left\|\Pi_m \left( d_\Sigma\right) - d_\Sigma\right\|_{H_{00}^{-1/2}(\Gamma)} \nonumber \\
	&\lesssim |\Gamma|^\frac{1}{2(n-1)} \left\|\Pi_m \left( d_\Sigma\right) - d_\Sigma\right\|_{0,\Gamma} = \text{osc}\left(u_\mathrm d\right).  \label{eq:oscillationsgenderive}
	\end{align}
	To conclude, we plug in (\ref{eq:decomperrdualOmega}) and (\ref{eq:oscillationsgenderive}) into equation (\ref{eq:finalestimation}), and thus
	\begin{align*}
	\mathcal E(u_\mathrm d) \lesssim |e|_{1,\Omega} +  \mathrm{osc}(u_\mathrm d). 
	\end{align*}
\end{proof}

\begin{remark}
	As in Remark \ref{rmk:oscsmall}, when the data is regular, it is always possible to choose
	$m$ large enough so that the asymptotic behavior of the oscillations is $\mathcal{O}\left(|\Gamma|^{m+\frac{1}{2(n-1)}}\right)$. Therefore, we can make sure that the oscillations get small with respect to the defeaturing error, when the feature gets small. 
\end{remark}

\section{Numerical  considerations and experiments} \label{sec:numexp}
From the definition of the \textit{a posteriori} defeaturing error estimator (\ref{eq:genestimator}) in the general case, to estimate the error introduced by defeaturing the problem geometry, we only need to perform the following steps.
\begin{enumerate}
	\item[(i)] Choose the Neumann data $g_0$ and solve the defeatured problem (\ref{eq:simplpb}).
	\item[(ii)] For the positive component $ F_\mathrm p$ of the feature $F$, choose the Neumann data $\tilde g$ and solve the local extension problem (\ref{eq:featurepbonF}). However, features may be geometrically complex, and the solution of the extension problem an unwanted burden. Therefore, instead of (\ref{eq:featurepbonF}), one can solve the extension problem (\ref{eq:featurepb}) in a chosen (simple) domain $\tilde F_\mathrm p$ that contains $F_\mathrm p$ and such that $\gamma_{0,\mathrm p} \subset \left( \partial \tilde F_\mathrm p \cap \partial F_\mathrm p\right)$.
	\item[(iii)] Compute the boundary averages and integrals 
	$\overline{d_\sigma}^\sigma$ and $\left\|d_\sigma - \overline{d_\sigma}^\sigma \right\|_{0,\sigma}$
	for each $\sigma\in\Sigma$, as defined in (\ref{eq:dsigma}). That is, we suitably evaluate the error made on the normal derivative of the solution on specific parts of the boundaries of the features. 
\end{enumerate} 

In the remaining part of the paper, we present a few numerical examples to illustrate the validity of our defeaturing error estimator. All the numerical experiments presented in the following section have been implemented in GeoPDEs \cite{geopde}, an open-source and free Octave/Matlab package for the resolution of partial differential equations specifically designed for isogeometric analysis \cite{igabook}. For the geometric description of the features and the local meshing process required, multipatch and trimming techniques have been used \cite{antolinvreps,weimarrusigantolin}. Moreover, a rather fine mesh is used in order to neglect the error due to the numerical approximation. 

\subsection{Impact of some  properties of the feature on the defeaturing error} \label{sec:impactfeat}
While validating the theory developed in Sec.~\ref{sec:negativeest} and Sec.~\ref{sec:complexfeat}, we study the impact of the shape and the size of a feature on the defeaturing error and estimator, and of the choice of the defeatured Neumann data. Moreover, as the estimator depends upon the size of the features and the size of the solution gradients ``around" the feature, we will be able to show an example where small features count more than big ones. 

\subsubsection{Feature shape} \label{sec:featshape}
In this example, we compare the behavior of the error and the estimator on the same Poisson problem in three different geometries: one with a star-shaped feature, another one with a circular feature, and the last one with a squared feature. Let $$\Omega_0 := \left\{ \big( r\cos(\theta), r\sin(\theta) \big) \in \mathbb R^2 : 0\leq r<1, 0\leq \theta \leq 2\pi \right\},$$ let $\Omega_\star:= \Omega_0\setminus \overline{F_\star}$, $\Omega_c:= \Omega_0\setminus \overline{F_c}$ and $\Omega_s:=\Omega_0\setminus \overline{F_s}$, with 
\begin{itemize}
	\item $F_\star$ the $10$-branch regular star of inner radius $r_\star>0$, outer radius $2r_\star$, centered in $(0,0)$,
	\item $F_c$ the circle of radius $r_c>0$, centered in $(0,0)$,
	\item $F_s$ the square of side length $2r_s>0$, centered in $(0,0)$,
\end{itemize}
as in Fig.~\ref{fig:shapeholes}. 

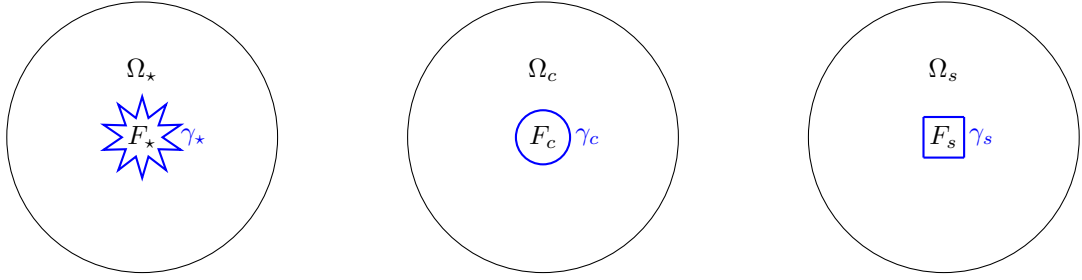
\begin{figure}
	\centering
	\begin{subfigure}{0.3\textwidth}
		\begin{center}
			\begin{tikzpicture}[scale=1.8]
			\draw (0.5,0.5) circle(1) ;
			\draw (0.5, 0.5) node{$F_\star$} ;
			\draw (0.5, 1) node{$\Omega_\star$} ;
			\draw[blue] (0.88, 0.5) node{$\gamma_\star$} ;
			\tstar{0.15}{0.3}{10}{0}{blue,shift={(0.5,0.5)},thick};
			\end{tikzpicture}
			\caption{Domain with a star feature.}
		\end{center}
	\end{subfigure}
	~
	\begin{subfigure}{0.3\textwidth}
		\begin{center}
			\begin{tikzpicture}[scale=1.8]
			\draw (0.5,0.5) circle(1) ;
			\draw[blue,thick] (0.5,0.5) circle(0.2) ;
			\draw (0.5, 0.5) node{$F_c$} ;
			\draw (0.5, 1) node{$\Omega_c$} ;
			\draw[blue] (0.83, 0.5) node{$\gamma_c$} ;
			\end{tikzpicture}
			\caption{Domain with a circle feature.} \label{fig:fouriergeom}
		\end{center}
	\end{subfigure}
	~
	\begin{subfigure}{0.3\textwidth}
		\begin{center}
			\begin{tikzpicture}[scale=1.8]
			\draw (0.5,0.5) circle(1) ;
			\draw[blue,thick] (0.35, 0.35)--(0.35,0.65) ;
			\draw[blue,thick] (0.35, 0.65)--(0.65,0.65) ;
			\draw[blue,thick] (0.65, 0.65)--(0.65,0.35) ;
			\draw[blue,thick] (0.65,0.35)--(0.35, 0.35);
			\draw (0.5, 0.5) node{$F_s$} ;
			\draw (0.5, 1) node{$\Omega_s$} ;
			\draw[blue] (0.78, 0.5) node{$\gamma_s$} ;
			\end{tikzpicture}
			\caption{Domain with a square feature.}
		\end{center}
	\end{subfigure}
	\caption{Comparison between feature shapes.}  \label{fig:shapeholes}
\end{figure}

We choose $r_\star, r_c,r_s>0$ such that $F_\star$, $F_c$ and $F_s$ have, first, the same area, and then, the same perimeter. 
We consider Poisson problem (\ref{eq:originalpb}) solved in $\Omega_\star$, $\Omega_c$ and in $\Omega_s$, and its defeatured version (\ref{eq:simplpb}). We take $f\equiv 1$ in $\Omega_0$, $h\equiv 0$ on $\Gamma_D := \partial \Omega_0$, and $g\equiv 0$ on $\partial F_\star$, on $\partial F_c$ and on $\partial F_s$. 

\begin{table}
	\centering
	{\bgroup
		\def\arraystretch{1.2}
		\begin{tabular}{@{}ccccccc@{}} 
			\hline
			\\[-1em]
			Domain $\Omega$ & ${\let\scriptstyle\textstyle\substack{\text{Perimeter} \\\text{of } F}}$ & Area of $F$ & $\mathcal{E}(u_0)$ & $|u-u_0|_{1,\Omega}$ & {$\displaystyle\frac{|u-u_0|_{1,\Omega}}{|u_0|_{0,\Omega}}$} & Eff. index \\ [-1em]
			\\
			\hline
			$\Omega_\star$, $r_\star=1.83\cdot 10^{-2}$ & $0.400$ & $2.07\cdot 10^{-3}$ & $1.98\cdot 10^{-3}$ & $1.56\cdot 10^{-3}$ & {$2.49\cdot 10^{-3}$} & $1.27$ \\ 
			$\Omega_c$, $r_c=6.37\cdot 10^{-2}$ & $0.400$ & $1.27\cdot 10^{-2}$ & $1.21\cdot 10^{-2}$ & $8.42\cdot 10^{-3}$ & {$1.34\cdot 10^{-2}$} & $1.45$ \\ 
			$\Omega_s$, $r_s=5.00\cdot 10^{-2}$ & $0.400$ & $1.00\cdot 10^{-2}$ & $9.57\cdot 10^{-3}$ & $6.74\cdot 10^{-3}$ & {$1.07\cdot 10^{-2}$} & $1.42$ \\ 
			$\Omega_c$, $r_c=5.64\cdot 10^{-2}$ & $0.355$ & $1.00\cdot 10^{-2}$ & $1.01\cdot 10^{-2}$ & $6.76\cdot 10^{-3}$ & {$1.08\cdot 10^{-2}$} & $1.51$ \\ 
			$\Omega_\star$, $r_\star=4.02\cdot 10^{-2}$ & $0.880$ & $1.00\cdot 10^{-2}$ & $7.53\cdot 10^{-3}$ & $6.65\cdot 10^{-3}$ & {$1.38\cdot 10^{-2}$} & $1.13$ \\ 
			\hline
	\end{tabular}
	\egroup}
	\caption{Results of the comparison between feature shapes. \label{table:compshape}}
\end{table}

The results are summarized in Table \ref{table:compshape}. 
We can see that in all the cases, the larger the area of the feature, the larger the defeaturing error and estimator. Moreover, the effectivity index only changes slightly when considering the same feature but with different measures: this shows that it is indeed independent from the measure of the considered feature and its boundary. The small change in the effectivity index is only due to numerical approximation, the solutions not being exact but being obtained on a very fine mesh. 
Furthermore, the shape of the feature does not impact much the defeaturing estimator: 
we do not observe any major difference between the smooth feature (the circle), the convex non-smooth Lipschitz feature (the square), and the non-convex non-smooth Lipschitz feature (the star). Our theory indeed treats those different types of geometries in the same way. {Finally, even if the estimator is referred to the absolute error, both the relative and the absolute errors are given to be able to quantify the magnitude of the defeaturing effect.}

\subsubsection{Feature size} \label{sec:featsize}
Removing a small feature where the solution of the PDE has a high gradient can significantly increase the defeaturing error, while the error might almost not be affected when removing a large feature where the solution of the PDE is nearly constant. The following example shows that our estimator is also able to capture this. Let $\Omega_0 := (0,1)^2$ and $\Omega := \Omega_0 \setminus \left(\overline{F^1} \cup \overline{F^2}\right)$, where $F^1$ and $F^2$ are circles of two different sizes given by
\begin{align*}
F^1 := \left\{ (1.1\cdot 10^{-3}, 1.1\cdot 10^{-3}) + \big( r\cos(\theta), r\sin(\theta) \big) \in \mathbb R^2 : \right.\quad& \\
\left.0\leq r<10^{-3}, 0\leq \theta \leq 2\pi \right\},& \\
F^2 := \left\{ (8.9\cdot 10^{-1}, 8.9\cdot 10^{-1}) + \big( r\cos(\theta), r\sin(\theta) \big) \in \mathbb R^2 : \right.\quad& \\
\left.0\leq r<10^{-1}, 0\leq \theta \leq 2\pi \right\},&
\end{align*}
similarly as in Fig.~\ref{fig:shapesize}. We consider Poisson problem (\ref{eq:originalpb}) solved in $\Omega$, and its defeatured version (\ref{eq:simplpb}) in $\Omega_0$. Let $f(x,y) := -128e^{-8(x+y)}$ in $\Omega_0$, $h (x,y) := e^{-8(x+y)}$ on 
$$\Gamma_D := \big\{ (x,0), (0,y)\in \mathbb R^2: 0\leq x,y < 1 \big\},$$
the bottom and left sides, $g(x,y):= -8e^{-8(x+y)}$ on $\partial \Omega_0 \setminus \overline{\Gamma_D}$, and finally $g \equiv 0$ on $\partial F^1 \cup \partial F^2$. Since the geometry contains two features, we call $\mathcal E^1$ and $\mathcal E^2$ the defeaturing estimators defined in (\ref{eq:negestimator}) and computed, respectively, on the boundary of $F^1$ and on the boundary of $F^2$, and we consider the sum of $\mathcal E^1$ and $\mathcal E^2$ as the total defeaturing estimator $\mathcal E$. 

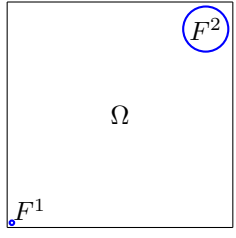
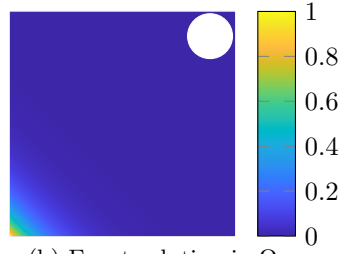
\begin{figure}
	\centering
	\begin{subfigure}{0.45\textwidth}
		\begin{center}
			\begin{tikzpicture}[scale=3]
			\draw (0,0) -- (1,0) ;
			\draw (0,0) -- (0,1) ;
			\draw (1,0) -- (1,1) ;
			\draw (0,1) -- (1,1) ;
			\draw[blue,thick] (0.021, 0.021) circle (0.01);
			\draw (0.1,0.08) node{$F^1$};
			\draw[blue,thick] (0.88,0.88) circle (0.1);
			\draw (0.88,0.88) node{$F^2$};
			\draw (0.5,0.5) node{$\Omega$};
			\end{tikzpicture}
			\caption{Exact domain $\Omega$ with two features (not at scale).}\label{fig:shapesize}
		\end{center}
	\end{subfigure}
	~
	\begin{subfigure}{0.45\textwidth}
		\begin{center}
			\vspace{-12cm}
			\include{data/featdiffsize}
			\vspace{-0.8cm}
			\caption{Exact solution in $\Omega$.}\label{fig:exactsolshapesize}
		\end{center}
	\end{subfigure}
	\caption{Geometry with two features of different size and exact solution.}
\end{figure}

\begin{table}
	\centering
	{
		\def\arraystretch{1.2}
		\begin{tabular}{@{}cccccc@{}} 
			\hline \\ [-1em]
			$\mathcal{E}^1(u_0)$ & $\mathcal{E}^2(u_0)$ & $\mathcal{E}(u_0)$ & $\big|u-u_0\big|_{1,\Omega}$ & {$\displaystyle\frac{|u-u_0|_{1,\Omega}}{|u_0|_{0,\Omega}}$} & Effectivity index \\ [-1em]
			\\
			\hline
			$5.03\cdot 10^{-2}$ & $7.86\cdot 10^{-6}$ & $5.03\cdot 10^{-2}$ & $1.45\cdot 10^{-2}$ & {$2.05\cdot 10^{-2}$} & $3.47$ \\
			\hline
	\end{tabular}}
	\caption{Results of the comparison between feature sizes. \label{table:sizefeat}}
\end{table}

With this choice, the solution to Poisson problem has a very high gradient near feature $F^1$, and it is almost constantly zero near feature $F^2$, as we can observe in Fig.~\ref{fig:exactsolshapesize}. Therefore, one can expect the presence of $F^1$ to be more important than $F^2$ with respect to the solution accuracy, even if $F^1$ is notably smaller than $F^2$. The results are presented in Table \ref{table:sizefeat}, where we can see that this is indeed the case: the estimator on $F^2$ is four orders of magnitude smaller than the estimator on $F^1$, even if the radius of $F^1$ is two orders of magnitude smaller than the one of $F^2$. This confirms the fact that our estimator as written in (\ref{eq:genestimator}) correctly trades off the measure of the features and their position in the geometrical domain, in order to correctly assess the impact of defeaturing on the solution. 

{Finally, and as in the previous numerical experiment of Sec.~\ref{sec:featsize}, both the relative error and the absolute error are given to be able to quantify the magnitude of the defeaturing effect. In the following, we will be interested in the convergence of the error and estimator with respect to the size of the feature. Since the relative error is a scaling of the absolute error, the convergence will be the same whether one considers the relative or the absolute error. Moreover, since the magnitude of the error depends on the problem at hand (geometries, size of the feature, and PDE data), and since the derived estimator is referred to the absolute error, we will only look at the absolute defeaturing error in the next experiments.}

\subsection{Error convergence with respect to the feature size} \label{sec:errcv}
We now analyze the convergence of our estimator with respect to the size of the feature and we compare it with the convergence of the defeaturing error. Moreover, we show an example in which the choice of the defeatured problem data influences drastically the convergence of both the estimator and the defeaturing error. 

\subsubsection{Two-dimensional geometries} \label{sec:cv2d}
We begin with two-dimensional examples of geometries with a negative feature. For $k=0,1,\ldots,6$, let $\varepsilon = \displaystyle\frac{10^{-2}}{2^k}$, and let $\Omega_\varepsilon^i := \Omega_0 \setminus \overline{F_\varepsilon^i}$ for $i=1,2$ with $\Omega_0 := (0,1)^2$ and 
\begin{align*}
F_\varepsilon^1 &:= \left\{ \big( 0.5+r\cos(\theta), 1+r\sin(\theta) \big) \in \mathbb R^2 : 0\leq r<\varepsilon, -\pi< \theta < 0 \right\}, \\
F_\varepsilon^2 &:= (1-\varepsilon,1)^2,
\end{align*}
as in Fig.~\ref{fig:geomcv2Dneg} and Fig.~\ref{fig:geomcv2Dnegangle}. For $i=1,2$, we consider Poisson problem (\ref{eq:originalpb}) solved in $\Omega_\varepsilon^i$, and its defeatured version (\ref{eq:simplpb}) in $\Omega_0$. We take $f(x,y) := 10\cos(3\pi x)\sin(5\pi y)$ in $\Omega_0$, $h\equiv 0$ on 
$$\Gamma_D := \big\{ (x,0)\in \mathbb R^2: 0< x < 1 \big\},$$ 
$g\equiv 0$ on $\Gamma_N := \partial \Omega_\varepsilon^i \setminus \overline{\Gamma_D}$, and $g_0 \equiv 0$ on $\partial \Omega_0 \setminus \partial \Omega_\varepsilon^i$. We respectively call $u^{(i)}$ and $u_0^{(i)}$ the exact and defeatured solutions. 

\begin{figure}
	\centering
	\begin{subfigure}{0.22\textwidth}
		\begin{center}
			\begin{tikzpicture}[scale=2.5]
			\draw (2,0) -- (3,0) ;
			\draw (2,0) -- (2,1) ;
			\draw (3,0) -- (3,1) ;
			\draw (2,1) -- (2.4,1) ;
			\draw (2.6,1) -- (3,1) ;
			\draw[red,thick] (2.4,1) -- (2.6,1) ;
			\draw[red] (2.5,1) node[above]{$\gamma_0$} ;
			\draw[blue,thick] (2.4, 1) arc (-180:0:.1cm);
			\draw (2.5,0.5) node{$\Omega_\varepsilon^1$} ;
			\draw (2.67,0.9) node{$F_\varepsilon^1$} ;
			\draw[blue] (2.5,0.9) node[below]{$\gamma$} ;
			\draw (2.5,1.3) node{} ;
			\end{tikzpicture}
			\caption{Geometry with the negative feature $F_\varepsilon^1$.}\label{fig:geomcv2Dneg}
		\end{center}
	\end{subfigure}
	~
	\begin{subfigure}{0.22\textwidth}
		\begin{center}
			\begin{tikzpicture}[scale=2.5]
			\draw (2,0) -- (3,0) ;
			\draw (2,0) -- (2,1) ;
			\draw (3,0) -- (3,0.8) ;
			\draw (2,1) -- (2.8,1) ;
			\draw[red,thick] (2.8,1) -- (3,1) ;
			\draw[red,thick] (3,0.8) -- (3,1) ;
			\draw[red] (2.9,1) node[above]{$\gamma_0$} ;
			\draw[blue,thick] (2.8,0.8) -- (2.8,1);
			\draw[blue,thick] (2.8,0.8) -- (3,0.8);
			\draw (2.5,0.5) node{$\Omega_\varepsilon^2$} ;
			\draw (2.9,0.9) node{$F_\varepsilon^2$} ;
			\draw[blue] (2.9,0.8) node[below]{$\gamma$} ;
			\draw (2.5,1.3) node{} ;
			\end{tikzpicture}
			\caption{Geometry with the negative feature $F_\varepsilon^2$.}\label{fig:geomcv2Dnegangle}
		\end{center}
	\end{subfigure}
	~
	\begin{subfigure}{0.22\textwidth}
		\begin{center}
			\begin{tikzpicture}[scale=2.5]
			\draw (2,0) -- (3,0) ;
			\draw (2,0) -- (2,1) ;
			\draw (3,0) -- (3,1) ;
			\draw (2,1) -- (2.4,1) ;
			\draw (2.6,1) -- (3,1) ;
			\draw[red,thick] (2.4,1) -- (2.6,1) ;
			\draw[red] (2.5,1) node[below]{$\gamma_0$} ;
			\draw[blue,thick] (2.6, 1) arc (0:180:.1cm);
			\draw (2.5,0.5) node{$\Omega_0$} ;
			\draw (2.7,1.1) node{$F_\varepsilon^3$} ;
			\draw[blue] (2.5,1.18) node{$\gamma$} ;
			\draw (2.5,1.3) node{} ;
			\end{tikzpicture}
			\caption{Geometry with the positive feature $F_\varepsilon^3$.}\label{fig:geomcv2Dpos}
		\end{center}
	\end{subfigure}
	~
	\begin{subfigure}{0.22\textwidth}
		\begin{center}
			\begin{tikzpicture}[scale=2.5]
			\draw (2,0) -- (3,0) ;
			\draw (2,0) -- (2,1) ;
			\draw (3,0) -- (3,1) ;
			\draw (2,1) -- (2.8,1) ;
			\draw[red,thick] (2.8,1) -- (3,1) ;
			\draw[red] (2.9,1) node[below]{$\gamma_0$} ;
			\draw[blue,thick] (2.8,1.2) -- (2.8,1);
			\draw[blue,thick] (2.8,1.2) -- (3,1.2);
			\draw[blue,thick] (3,1.2) -- (3,1);
			\draw (2.5,0.5) node{$\Omega_0$} ;
			\draw (2.9,1.1) node{$F_\varepsilon^4$} ;
			\draw[blue] (2.8,1.1) node[left]{$\gamma$} ;
			\draw (2.5,1.18) node{} ;
			\draw (2.5,1.3) node{} ;
			\end{tikzpicture}
			\caption{Geometry with the positive feature $F_\varepsilon^4$.}\label{fig:geomcv2Dposangle}
		\end{center}
	\end{subfigure}
	\caption{$2$D geometries $\Omega_\varepsilon^i$, $i=1,2,3,4$.} \label{fig:geomcv2D}
\end{figure}

The results are presented in Fig.~\ref{fig:cv2Dneg}. Both the error and the estimator converge with respect to the size of the feature as $\varepsilon\propto |\gamma|$ in the first geometry $\Omega_\varepsilon^1$, and as $\varepsilon^2 \propto |\gamma|^2$ in the second geometry $\Omega_\varepsilon^2$. Clearly, the difference in asymptotic behavior of the error depends on symmetries and on the Neumann boundary conditions. Indeed, $\Omega_\varepsilon^2$ has features with sides parallel to $\partial \Omega_0$. Moreover, the effectivity index is indeed independent from the size of the feature since it remains nearly equal to $1.81$ and $1.78$, respectively, and for all values of $\varepsilon$. That is, as predicted by the theory since the estimator is both reliable (Theorem \ref{thm:upperbound}) and efficient up to oscillations (Theorem \ref{thm:lowerbound}), here in dimension two, the dependence of the estimator with respect to the size of the feature is explicit. \\

\begin{figure}
	\centering
	\begin{subfigure}{0.48\textwidth}
		\begin{center}
			\begin{tikzpicture}[scale=0.7]
			\begin{axis}[xmode=log, ymode=log, legend style={at={(0.5,-0.15)}, legend columns=3, anchor=north, draw=none}, xlabel=$\varepsilon$, ylabel=Error]
			\addplot[mark=x, red, thick] table [x=eps, y=errh1s, col sep=comma] {data/cv2dneg.csv};
			\addplot[mark=x, blue, densely dashed, thick, mark options=solid] table [x=eps, y=est, col sep=comma] {data/cv2dneg.csv};
			\addplot[mark=none, loosely dotted, thick] table [x=eps, y=epsdiv, col sep=comma] {data/cv2dpos.csv};
			\addplot[mark=o, red, thick] table [x=eps, y=errh1s, col sep=comma] {data/cv2dnegangle.csv};
			\addplot[mark=o, blue, densely dashed, thick, mark options=solid] table [x=eps, y=est, col sep=comma] {data/cv2dnegangle.csv};
			\addplot[mark=none, thick, densely dotted] table [x=eps, y=eps2, col sep=comma] {data/cv2dnegangle.csv};
			\legend{$\left|u^{(1)}-u_0^{(1)}\right|_{1,\Omega_\varepsilon^1}$,$\mathcal E\Big(u_0^{(1)}\Big)$,$\mathcal O(\varepsilon)$,$\left|u^{(2)}-u_0^{(2)}\right|_{1,\Omega_\varepsilon^2}$,$\mathcal E\Big(u_0^{(2)}\Big)$,$\mathcal O\left(\varepsilon^2\right)$};
			\end{axis}
			\end{tikzpicture}
			\caption{Geometries $\Omega_\varepsilon^1$ and $\Omega_\varepsilon^2$ with a negative feature.} \label{fig:cv2Dneg}
		\end{center}
		\vspace{0.2cm}
	\end{subfigure}
	~
	\begin{subfigure}{0.48\textwidth}
		\begin{center}
			\begin{tikzpicture}[scale=0.7]
			\begin{axis}[xmode=log, ymode=log, legend style={at={(0.5,-0.15)}, legend columns=3, anchor=north, draw=none}, xlabel=$\varepsilon$, ylabel=Error]
			\addplot[mark=x, red, thick] table [x=eps, y=errh1s, col sep=comma] {data/cv2dpos.csv};
			\addplot[mark=x, blue, densely dashed, thick, mark options=solid] table [x=eps, y=est, col sep=comma] {data/cv2dpos.csv};
			\addplot[mark=none, loosely dotted, thick] table [x=eps, y=epsdiv, col sep=comma] {data/cv2dpos.csv};
			\addplot[mark=o, red, thick] table [x=eps, y=err, col sep=comma] {data/cv2dposangle.csv};
			\addplot[mark=o, blue, densely dashed, thick, mark options=solid] table [x=eps, y=est, col sep=comma] {data/cv2dposangle.csv};
			\addplot[mark=none, thick, densely dotted] table [x=eps, y=eps2, col sep=comma] {data/cv2dnegangle.csv};
			\legend{$\left|u^{(3)}-u_\mathrm d^{(3)}\right|_{1,\Omega_\varepsilon^3}$, $\mathcal E\Big(u_\mathrm d^{(3)}\Big)$,$\mathcal O(\varepsilon)$, $\left|u^{(4)}-u_\mathrm d^{(4)}\right|_{1,\Omega_\varepsilon^4}$, $\mathcal E\Big(u_\mathrm d^{(4)}\Big)$, $\mathcal O\left(\varepsilon^2\right)$};
			\end{axis}
			\end{tikzpicture}
			\caption{Geometries $\Omega_\varepsilon^3$ and $\Omega_\varepsilon^4$ with a positive feature.}\label{fig:cv2Dpos}
		\end{center}
		\vspace{0.2cm}
	\end{subfigure}
	~
	\begin{subfigure}{0.48\textwidth}
		\begin{center}
			\begin{tikzpicture}[scale=0.7]
			\begin{axis}[xmode=log, ymode=log, legend style={at={(0.5,-0.15)}, legend columns=3, anchor=north, draw=none}, xlabel=$\varepsilon$, ylabel=Error]
			\addplot[mark=o, red, thick] table [x=eps, y=errh1s, col sep=comma] {data/cv2dgena_data2.csv};
			\addplot[mark=o, blue, densely dashed, thick, mark options=solid] table [x=eps, y=est, col sep=comma] {data/cv2dgena_data2.csv};
			\addplot[mark=none, loosely dotted, thick] table [x=eps, y=epsdiv, col sep=comma] {data/cv2dgenb_data2.csv};
			\legend{$\left|u^{(5)}-u_\mathrm d^{(5)}\right|_{1,\Omega_\varepsilon^5}$, $\mathcal E\Big(u_\mathrm d^{(5)}\Big)$,$\mathcal O(\varepsilon)$};
			\end{axis}
			\end{tikzpicture}
			\caption{Geometry $\Omega_\varepsilon^5$ with a general complex feature.}\label{fig:cv2Dgen}
		\end{center}
	\end{subfigure}
	~
	\begin{subfigure}{0.48\textwidth}
		\begin{center}
			\begin{tikzpicture}[scale=0.7]
			\begin{axis}[xmode=log, ymode=log, legend style={at={(0.5,-0.15)}, legend columns=3, anchor=north, draw=none}, xlabel=$\varepsilon$, ylabel=Error]
			\addplot[mark=x, red, thick] table [x=eps, y=errh1s, col sep=comma] {data/cv2dgenb_data2.csv};
			\addplot[mark=x, blue, densely dashed, thick, mark options=solid] table [x=eps, y=est, col sep=comma] {data/cv2dgenb_data2.csv};
			\addplot[mark=none, densely dotted, thick] table [x=eps, y=epsdiv, col sep=comma] {data/cv2dgenb_data2.csv};
			\legend{$\left|u^{(6)}-u_\mathrm d^{(6)}\right|_{1,\Omega_\varepsilon^6}$, $\mathcal E\Big(u_\mathrm d^{(6)}\Big)$, $\mathcal O(\varepsilon)$};
			\end{axis}
			\end{tikzpicture}
			\caption{Geometry $\Omega_\varepsilon^6$ with a general complex feature.}\label{fig:cv2Dgenb}
		\end{center}
	\end{subfigure}
	\caption{Convergence of the error and of the estimator in $2$D domains with one feature.} \label{fig:cv2D}
\end{figure}

Let us now consider two-dimensional examples of geometries with a positive feature. Let $\Omega_0$, $\Gamma_D$, $f$, $h$ and $g$ be as before, and let $\Omega_\varepsilon^j := \textrm{int}\left(\overline{\Omega_0} \cup \overline{F_\varepsilon^j}\right)$ for $j=3,4$ with 
\begin{align*}
F_\varepsilon^3 &:= \left\{ \big( 0.5+r\cos(\theta), 1+r\sin(\theta) \big) \in \mathbb R^2 : 0\leq r<\varepsilon, 0< \theta < \pi \right\},\\
F_\varepsilon^4 &:=(1-\varepsilon,1)\times (1,1+\varepsilon),
\end{align*}
as in Fig.~\ref{fig:geomcv2Dpos} and Fig.~\ref{fig:geomcv2Dposangle}. Let $\Gamma_N := \partial \Omega_\varepsilon^j \setminus \overline{\Gamma_D}$. 
For each $j=3,4$, we consider the same Poisson problem (\ref{eq:originalpb}) as before, but solved in $\Omega_\varepsilon^j$. We also solve its defeatured version (\ref{eq:simplpb}) in $\Omega_0$ with $g_0 \equiv 0$ on $\partial \Omega_0 \setminus \partial \Omega_\varepsilon^j$. Then we extend the defeatured solution to $F_\varepsilon^j$ by (\ref{eq:featurepb}) with $\tilde F := F_\varepsilon^j$. We respectively call $u^{(j)}$ and $u_0^{(j)}$ the exact and defeatured solutions, and $u_\mathrm d^{(j)}$ the defeatured solution extended to $F_\varepsilon^j$. 

The results are presented in Fig.~\ref{fig:cv2Dpos}. As for the negative feature case, the error in $\Omega_0$ and the estimator converge with respect to the size of the feature as $\varepsilon\propto \left|\gamma_0\right|$ in the first geometry $\Omega_\varepsilon^3$, and as $\varepsilon^2 \propto |\gamma|^2$ in the second geometry $\Omega_\varepsilon^4$. Again, the difference in asymptotic behavior of the error depends on symmetries and on the Neumann boundary conditions. Indeed, $\Omega_\varepsilon^4$ has features with sides parallel to $\partial \Omega_0$. Moreover, the effectivity index is indeed almost independent from the size of the feature since it remains nearly equal to $2.93$ and $3.22$, respectively, for all values of $\varepsilon$. That is, as predicted by the theory since the estimator is both reliable (Theorem \ref{thm:genupperbound}) and efficient up to oscillations (Theorem \ref{thm:genlowerbound}), here in dimension two, the dependence of the estimator with respect to the size of the feature is explicit. We also remark that the effectivity indices for the positive features are little bit larger than the ones for the negative features.\\

\begin{figure}
	\begin{center}
		\begin{subfigure}{0.23\textwidth}
			\begin{center}
				\begin{tikzpicture}[scale=2.8]
				\fill[orange!50!black, fill, opacity=0.3] (0.4,1) rectangle (0.5,1.1);
				\fill[pattern=north west lines, pattern color=green!50!black, opacity=0.3] (0.5,1) rectangle (0.6,0.9);
				\draw[gray] (0,0) -- (1,0) ;
				\draw (0,0) -- (0,1) ;
				\draw (1,0) -- (1,1) ;
				\draw (0,1) -- (0.4,1) ;
				\draw (0.6,1) -- (1,1) ;
				\draw (0.4,1.1)--(0.5,1.1);
				\draw (0.4,1.1)--(0.4,1);
				\draw (0.5,1.1)--(0.5,1);
				\draw (0.5,0.9)--(0.6,0.9);
				\draw (0.5,0.9)--(0.5,1);
				\draw (0.6,0.9)--(0.6,1);
				\draw (0.5,0.5) node{$\Omega_\varepsilon^5$} ;
				\draw[orange!50!black] (0.45,1.08) node[above]{$F_{\mathrm p,\varepsilon}^5$} ;
				\draw[green!50!black] (0.55,0.9) node[below]{$F_{\mathrm n,\varepsilon}^5$} ;
				\end{tikzpicture}
				\caption{Exact domain $\Omega_\varepsilon^5$.}
			\end{center}
		\end{subfigure}
		~
		\begin{subfigure}{0.23\textwidth}
			\begin{center}
				\begin{tikzpicture}[scale=4.6]
				\draw (0.25,1.3) -- (0.75,1.3) ;
				\draw[red,thick] (0.4,1.3) -- (0.5,1.3) ;
				\draw[red,thick] (0.4,1.28) -- (0.4,1.32) ;
				\draw[red,thick] (0.45,1.32) node[above]{$\gamma_{0,\mathrm p}$} ;
				\draw[red,thick] (0.5,1.3) -- (0.6,1.3) ;
				\draw[red,thick] (0.5,1.28) -- (0.5,1.32) ;
				\draw[red,thick] (0.6,1.28) -- (0.6,1.32) ;
				\draw[red,thick] (0.55,1.28) node[below]{$\gamma_{0, \mathrm n}$} ;
				\draw (0.25,1) -- (0.4,1) ;
				\draw[blue] (0.48,1) -- (0.52, 1) ;
				\draw (0.6,1) -- (0.75,1) ;
				\draw[blue,thick] (0.4,1.1)--(0.5,1.1);
				\draw[blue,thick] (0.4,1.1)--(0.4,1);
				\draw[blue,thick] (0.5,1.1)--(0.5,1);
				\draw[blue,thick] (0.5,0.9)--(0.6,0.9);
				\draw[blue,thick] (0.5,0.9)--(0.5,1);
				\draw[blue,thick] (0.6,0.9)--(0.6,1);
				\draw[blue] (0.45,1.1) node[above]{$\gamma_\mathrm p$} ;
				\draw[blue] (0.55,0.9) node[below]{$\gamma_\mathrm n$} ;
				\end{tikzpicture}
				\caption{Zoom on part of the upper boundary of $\Omega_0$ (up) and $\Omega_\varepsilon^5$ (down).}
			\end{center}
		\end{subfigure}
		~
		\begin{subfigure}{0.23\textwidth}
			\begin{center}
				\begin{tikzpicture}[scale=2.8]
				\fill[orange!50!black, fill, opacity=0.3] (0.425,1) rectangle (0.525,1.1);
				\fill[pattern=north west lines, pattern color=green!50!black, opacity=0.3] (0.475,1) rectangle (0.575,0.9);
				\draw[gray] (0,0) -- (1,0) ;
				\draw (0,0) -- (0,1) ;
				\draw (1,0) -- (1,1) ;
				\draw (0,1) -- (0.425,1) ;
				\draw (0.525,1) -- (0.475, 1) ;
				\draw (0.575,1) -- (1,1) ;
				\draw (0.425,1.1)--(0.525,1.1);
				\draw (0.425,1.1)--(0.425,1);
				\draw (0.525,1.1)--(0.525,1);
				\draw (0.475,0.9)--(0.575,0.9);
				\draw (0.475,0.9)--(0.475,1);
				\draw (0.575,0.9)--(0.575,1);
				\draw (0.5,0.5) node{$\Omega_\varepsilon^6$} ;
				\draw[orange!50!black] (0.475,1.08) node[above]{$F_{\mathrm p,\varepsilon}^6$} ;
				\draw[green!50!black] (0.525,0.9) node[below]{$F_{\mathrm n,\varepsilon}^6$} ;
				\end{tikzpicture}
				\caption{Exact domain $\Omega_\varepsilon^6$.}
			\end{center}
		\end{subfigure}
		~
		\begin{subfigure}{0.23\textwidth}
			\begin{center}
				\begin{tikzpicture}[scale=4.6]
				\draw (0.25,1.3) -- (0.75,1.3) ;
				\draw[red,thick] (0.425,1.3) -- (0.475,1.3) ;
				\draw[red,thick] (0.475,1.3) -- (0.575,1.3) ;
				\draw[red] (0.44,1.32) node[above]{$\gamma_{0,\mathrm p}$} ;
				\draw[red] (0.525,1.28) node[below]{$\gamma_{0,\mathrm n}$} ;
				\draw[red,thick] (0.475,1.28) -- (0.475,1.32);
				\draw[blue] (0.465,1.01) -- (0.485, 0.985) ;
				\draw (0.25,1) -- (0.425,1) ;
				\draw[blue,thick] (0.475,1) -- (0.525, 1) ;
				\draw (0.575,1) -- (0.75,1) ;
				\draw[blue,thick] (0.425,1.1)--(0.525,1.1);
				\draw[blue,thick] (0.425,1.1)--(0.425,1);
				\draw[blue,thick] (0.525,1.1)--(0.525,1);
				\draw[blue,thick] (0.475,0.9)--(0.575,0.9);
				\draw[blue,thick] (0.475,0.9)--(0.475,1);
				\draw[blue,thick] (0.575,0.9)--(0.575,1);
				\draw[red,thick] (0.575,1.28) -- (0.575,1.32) ;
				\draw[red,thick] (0.425,1.28) -- (0.425,1.32) ;
				\draw[blue] (0.45,1.1) node[above]{$\gamma_\mathrm p$} ;
				\draw[blue] (0.525,0.9) node[below]{$\gamma_\mathrm n$} ;
				\end{tikzpicture}
				\caption{Zoom on part of the upper boundary of $\Omega_0$ (up) and $\Omega_\varepsilon^6$ (down).}
			\end{center}
		\end{subfigure}
		\caption{Exact domains $\Omega_\varepsilon^5$ and $\Omega_\varepsilon^6$.} \label{fig:twoclosefeat}
	\end{center}
\end{figure}
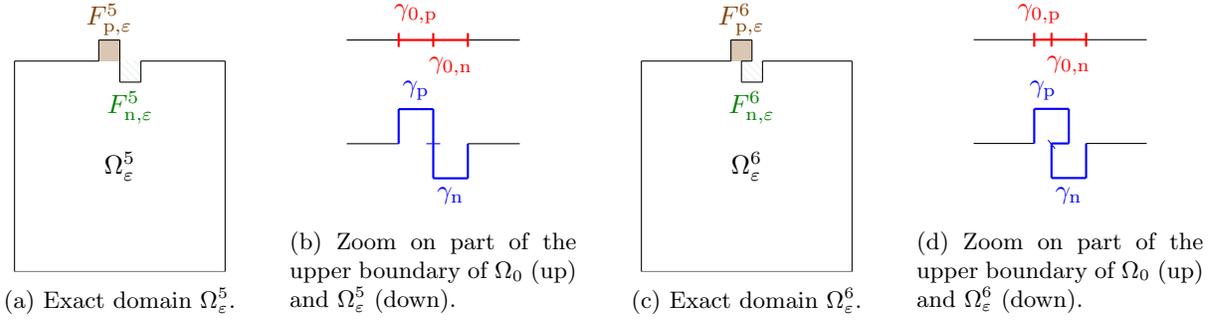

Let us finally consider two-dimensional examples of geometries with a general complex feature. Let $\Omega_0$, $\Gamma_D$, $f$, $h$ and $g$ be again as before, and for $\ell=5,6$, let $\Omega_\varepsilon^\ell := \mathrm{int}\left(\overline{\Omega_0}\cup \overline{F_{\mathrm n,\varepsilon}^\ell} \setminus \overline{F_{\mathrm p,\varepsilon}^\ell}\right)$ where, as illustrated in Fig.~\ref{fig:twoclosefeat},
\begin{align*}
F_{\mathrm p,\varepsilon}^5 &:= \left\{ \left(0.5-\varepsilon, 1\right) + (r,t) : 0<r,t<\varepsilon \right\},\\
F_{\mathrm n,\varepsilon}^5 &:= \left\{ \left(0.5, 1\right) + (r, -t) : 0<r,t<\varepsilon \right\}, \\
F_{\mathrm p,\varepsilon}^6 &:= \left\{ \left(0.5-\frac{3\varepsilon}{4}, 1\right) + (r,t) : 0<r,t<\varepsilon \right\},\\
F_{\mathrm n,\varepsilon}^6 &:= \left\{ \left(0.5-\frac{\varepsilon}{4}, 1\right) + (r, -t) : 0<r,t<\varepsilon \right\}.
\end{align*}
For each $\ell=5,6$, let $\Gamma_N := \partial \Omega_\varepsilon^\ell \setminus \overline{\Gamma_D}$ and we consider the same Poisson problem (\ref{eq:originalpb}) as before, but solved in $\Omega_\varepsilon^\ell$. We also solve its defeatured version (\ref{eq:simplpb}) in $\Omega_0$ with $g_0 \equiv 0$ on $\gamma_0$ (note from Fig.~\ref{fig:twoclosefeat} that $\gamma_0$ is different whether $\ell=5$ or $\ell=6$). Then we extend the defeatured solution to $F_\varepsilon^\ell$ by (\ref{eq:featurepb}) with $\tilde F := F_{\mathrm n,\varepsilon}^\ell$. As before, we respectively call $u^{(\ell)}$ and $u_0^{(\ell)}$ the exact and defeatured solutions, and $u_\mathrm d^{(\ell)}$ the defeatured solution extended to $F_{\mathrm n,\varepsilon}^\ell$. 

The results are presented in Fig.~\ref{fig:cv2Dgen}. As for the negative and positive feature cases, the error in $\Omega_0$ and the estimator converge with respect to the size of the feature as $\varepsilon\propto \left|\gamma_\mathrm n\right| \simeq \left|\gamma_{0,\mathrm p}\right|$ in both geometries $\Omega_\varepsilon^5$ and $\Omega_\varepsilon^6$. Moreover, the effectivity index is indeed almost independent from the size of the feature since it remains nearly equal to $1.71$ and $1.84$, respectively, for all values of $\varepsilon$. That is, as predicted by the theory since the estimator is both reliable (Theorem \ref{thm:genupperbound}) and efficient up to oscillations (Theorem \ref{thm:genlowerbound}), here in dimension two, the dependence of the estimator with respect to the size of the feature is explicit. We finally remark that the effectivity indices for the positive features are little bit larger than the ones for the negative features. 

\subsubsection{Three-dimensional geometries} \label{sec:cv3d}
Let us first consider three-dimensional examples of geometries with a negative feature. Let $\varepsilon = \displaystyle\frac{10^{-2}}{2^k}$ for $k=0,1,\ldots,6$, and $\Omega_\varepsilon^i := \Omega_0 \setminus \overline{F_\varepsilon^i}$ for $i=1,2$ with $\Omega_0 := (0,1)^3$ and 
\begin{align*}
F_\varepsilon^1 &:= \left\{ (x,y,z)\in\mathbb R^3: 0.5-\frac{\varepsilon}{2}<x<0.5+\frac{\varepsilon}{2}, 1-\varepsilon<y<1, 0<z<\varepsilon \right\}, \\
F_\varepsilon^2 &:= F_\varepsilon^1+\left(0.5-\frac{\varepsilon}{2},0,0\right),
\end{align*}
as in Fig.~\ref{fig:geomcv3Dneg} and Fig.~\ref{fig:geomcv3Dnegangle}. 
For each $i=1,2$, we consider Poisson problem (\ref{eq:originalpb}) solved in $\Omega_\varepsilon^i$, and its defeatured version (\ref{eq:simplpb}) in $\Omega_0$. We take 
\begin{align*}
&f(x,y) := 10\cos(3\pi x)\sin(5\pi y)\sin(7\pi z) \text{ in } \Omega,\\
&h\equiv 0 \text{ on }\Gamma_D := \big\{ (x,0,z)\in \mathbb R^3: 0< x,z < 1 \big\},
\end{align*}
$g\equiv 0$ on $\Gamma_N := \partial \Omega_\varepsilon^i \setminus \overline{\Gamma_D}$, and $g_0 \equiv 0$ on $\partial \Omega_0 \setminus \partial \Omega_\varepsilon^i$. 

\begin{figure}
	\centering
	\begin{subfigure}{0.22\textwidth}
		\begin{center}
			\begin{tikzpicture}[scale=2.1]
			\pgfmathsetmacro{\cubex}{1}
			\pgfmathsetmacro{\cubey}{1}
			\pgfmathsetmacro{\cubez}{1}
			\draw (0,-\cubey,0) -- (-0.4,-\cubey,0);
			\draw (0,0,0) -- ++(-\cubex,0,0) -- ++(0,-\cubey,0) -- ++(0.32,0,0);
			\draw (0,0,0) -- ++(0,0,-\cubez) -- ++(0,-\cubey,0) -- ++(0,0,\cubez) -- cycle; 
			\draw (0,0,0) -- ++(-\cubex,0,0) -- ++(0,0,-\cubez) -- ++(\cubex,0,0) -- cycle;
			\pgfmathsetmacro{\cubex}{0.2}
			\pgfmathsetmacro{\cubey}{0.2}
			\pgfmathsetmacro{\cubez}{0.2}
			\draw (-0.6,-1,0) -- (-0.68,-1,0);
			\draw[fill=blue!30,opacity=0.5] (-0.4,-0.8,-0.2) -- ++(-\cubex,0,0) -- ++(0,-\cubey,0) -- ++(\cubex,0,0) -- cycle;
			\draw[fill=blue!30,opacity=0.5] (-0.4,-0.8,0) -- ++(0,0,-\cubez) -- ++(0,-\cubey,0) -- ++(0,0,\cubez) -- cycle; 
			\draw[fill=blue!30,opacity=0.5] (-0.6,-0.8,0) -- ++(0,0,-\cubez) -- ++(0,-\cubey,0) -- ++(0,0,\cubez) -- cycle; 
			\draw[fill=blue!30,opacity=0.5] (-0.4,-0.8,0) -- ++(-\cubex,0,0) -- ++(0,0,-\cubez) -- ++(\cubex,0,0) -- cycle;
			\draw[fill=red!30,opacity=0.5] (-0.4,-0.8,0) -- ++(-\cubex,0,0) -- ++(0,-\cubey,0) -- ++(\cubex,0,0) -- cycle;
			\draw[gray,opacity=0.5] (-1,-1,-1) -- (-1,-1,0);
			\draw[gray,opacity=0.5] (-1,-1,-1) -- (-1,0,-1);
			\draw[gray,opacity=0.5] (-1,-1,-1) -- (0,-1,-1);
			\draw (-0.5,-0.5,-0.5) node{$\Omega_\varepsilon^1$};
			\draw (-0.5,-0.7,0.5) node{$F_\varepsilon^1$};
			\draw[blue] (0.15,-0.45,1) node{$\gamma$};
			\draw[red] (-0.05,-0.65,1.2) node{$\gamma_0$};
			\end{tikzpicture}
			\caption{Geometry $\Omega_\varepsilon^1$ with the negative feature $F_\varepsilon^1$.}\label{fig:geomcv3Dneg}
		\end{center}
	\end{subfigure}
	~
	\begin{subfigure}{0.22\textwidth}
		\begin{center}
			\begin{tikzpicture}[scale=2.1]
			\pgfmathsetmacro{\cubex}{1}
			\pgfmathsetmacro{\cubey}{1}
			\pgfmathsetmacro{\cubez}{1}
			\draw (-1,-\cubey,0) -- (-0.2,-\cubey,0);
			\draw (0,-1,-1) -- (0,0,-1);
			\draw (0, -1, -1) -- (0, -1, -0.2); 
			\draw (0,0,0) -- ++(-\cubex,0,0) -- ++(0,-\cubey,0) -- ++(0.32,0,0);
			\draw (0,0,0) -- ++(-\cubex,0,0) -- ++(0,0,-\cubez) -- ++(\cubex,0,0) -- cycle;
			\pgfmathsetmacro{\cubex}{0.2}
			\pgfmathsetmacro{\cubey}{0.2}
			\pgfmathsetmacro{\cubez}{0.2}
			\draw (-0.6,-1,0) -- (-0.68,-1,0);
			\draw[fill=blue!30,opacity=0.5] (0,-0.8,-0.2) -- ++(-\cubex,0,0) -- ++(0,-\cubey,0) -- ++(\cubex,0,0) -- cycle;
			\draw[fill=red!30,opacity=0.5] (0,-0.8,0) -- ++(0,0,-\cubez) -- ++(0,-\cubey,0) -- ++(0,0,\cubez) -- cycle; 
			\draw[fill=blue!30,opacity=0.5] (-0.2,-0.8,0) -- ++(0,0,-\cubez) -- ++(0,-\cubey,0) -- ++(0,0,\cubez) -- cycle; 
			\draw[fill=blue!30,opacity=0.5] (0,-0.8,0) -- ++(-\cubex,0,0) -- ++(0,0,-\cubez) -- ++(\cubex,0,0) -- cycle;
			\draw[fill=red!30,opacity=0.5] (0,-0.8,0) -- ++(-\cubex,0,0) -- ++(0,-\cubey,0) -- ++(\cubex,0,0) -- cycle;
			\draw (0,-0.8,0) -- (0,0,0);
			\draw[gray,opacity=0.5] (-1,-1,-1) -- (-1,-1,0);
			\draw[gray,opacity=0.5] (-1,-1,-1) -- (-1,0,-1);
			\draw[gray,opacity=0.5] (-1,-1,-1) -- (0,-1,-1);
			\draw[gray,opacity=0.5] (0, -1, 0) -- (0, -1, -0.2); 
			\draw[gray,opacity=0.5] (0,-0.8,0) -- (0,-1,0);
			\draw (-0.5,-0.5,-0.5) node{$\Omega_\varepsilon^2$};
			\draw (-0.15,-0.7,0.5) node{$F_\varepsilon^2$};
			\draw[blue] (0.2,-0.32,0.8) node{$\gamma$};
			\draw[red] (0.4,-0.65,1.2) node{$\gamma_0$};
			\end{tikzpicture}
			\caption{Geometry $\Omega_\varepsilon^2$ with the negative feature $F_\varepsilon^2$.}\label{fig:geomcv3Dnegangle}
		\end{center}
	\end{subfigure}
	~
	\begin{subfigure}{0.22\textwidth}
		\begin{center}
			\begin{tikzpicture}[scale=2.1]
			\pgfmathsetmacro{\cubex}{1}
			\pgfmathsetmacro{\cubey}{1}
			\pgfmathsetmacro{\cubez}{1}
			\draw (0,-\cubey,0) -- (-0.4,-\cubey,0);
			\draw (0,0,0) -- ++(-\cubex,0,0) -- ++(0,-\cubey,0) -- ++(0.32,0,0);
			\draw (0,0,0) -- ++(0,0,-\cubez) -- ++(0,-\cubey,0) -- ++(0,0,\cubez) -- cycle; 
			\draw (0,0,0) -- ++(-\cubex,0,0) -- ++(0,0,-\cubez) -- ++(\cubex,0,0) -- cycle;
			\pgfmathsetmacro{\cubex}{0.2}
			\pgfmathsetmacro{\cubey}{0.2}
			\pgfmathsetmacro{\cubez}{0.2}
			\draw[gray,fill=red!20] (-0.4,-0.8,0) -- ++(-\cubex,0,0) -- ++(0,-\cubey,0) -- ++(\cubex,0,0) -- cycle;
			\draw[gray] (-0.6,-1,0) -- (-0.6,-1,0.2);
			\draw[gray] (-0.6,-1,0) -- (-0.68,-1,0);
			\draw[fill=blue!30,opacity=0.5] (-0.4,-0.8,0.2) -- ++(-\cubex,0,0) -- ++(0,-\cubey,0) -- ++(\cubex,0,0) -- cycle;
			\draw[fill=blue!30,opacity=0.5] (-0.4,-0.8,0.2) -- ++(0,0,-\cubez) -- ++(0,-\cubey,0) -- ++(0,0,\cubez) -- cycle;
			\draw[fill=blue!30,opacity=0.5] (-0.4,-0.8,0.2) -- ++(-\cubex,0,0) -- ++(0,0,-\cubez) -- ++(\cubex,0,0) -- cycle;
			\draw[gray,opacity=0.5] (-1,-1,-1) -- (-1,-1,0);
			\draw[gray,opacity=0.5] (-1,-1,-1) -- (-1,0,-1);
			\draw[gray,opacity=0.5] (-1,-1,-1) -- (0,-1,-1);
			\draw (-0.5,-0.5,-0.5) node{$\Omega_0$};
			\draw (0.1,-0.7,1.1) node{$F_\varepsilon^3$};
			\draw[red] (0.1,-0.45,1) node{$\gamma_0$};
			\draw[blue] (-0.12,-0.73,1.2) node{$\gamma$};
			\end{tikzpicture}
			\caption{Geometry $\Omega_\varepsilon^3$ with the positive feature $F_\varepsilon^3$.}\label{fig:geomcv3Dpos}
		\end{center}
	\end{subfigure}
	~
	\begin{subfigure}{0.22\textwidth}
		\begin{center}
			\begin{tikzpicture}[scale=2.1]
			\pgfmathsetmacro{\cubex}{1}
			\pgfmathsetmacro{\cubey}{1}
			\pgfmathsetmacro{\cubez}{1}
			\draw (0,-\cubey,0) -- (-0.8,-\cubey,0);
			\draw (0,0,0) -- ++(-\cubex,0,0) -- ++(0,-\cubey,0) -- ++(0.32,0,0);
			\draw (0,0,0) -- ++(0,0,-\cubez) -- ++(0,-\cubey,0) -- ++(0,0,\cubez) -- cycle; 
			\draw (0,0,0) -- ++(-\cubex,0,0) -- ++(0,0,-\cubez) -- ++(\cubex,0,0) -- cycle;
			\pgfmathsetmacro{\cubex}{0.2}
			\pgfmathsetmacro{\cubey}{0.2}
			\pgfmathsetmacro{\cubez}{0.2}
			\draw[gray,fill=red!20] (0,-0.8,0) -- ++(-\cubex,0,0) -- ++(0,-\cubey,0) -- ++(\cubex,0,0) -- cycle;
			\draw[gray] (-0.2,-1,0) -- (-0.2,-1,0.2);
			\draw[gray] (-0.2,-1,0) -- (-0.28,-1,0);
			\draw[fill=blue!30,opacity=0.5] (0,-0.8,0.2) -- ++(-\cubex,0,0) -- ++(0,-\cubey,0) -- ++(\cubex,0,0) -- cycle;
			\draw[fill=blue!30,opacity=0.5] (0,-0.8,0.2) -- ++(0,0,-\cubez) -- ++(0,-\cubey,0) -- ++(0,0,\cubez) -- cycle;
			\draw[fill=blue!30,opacity=0.5] (0,-0.8,0.2) -- ++(-\cubex,0,0) -- ++(0,0,-\cubez) -- ++(\cubex,0,0) -- cycle;
			\draw[gray,opacity=0.5] (-1,-1,-1) -- (-1,-1,0);
			\draw[gray,opacity=0.5] (-1,-1,-1) -- (-1,0,-1);
			\draw[gray,opacity=0.5] (-1,-1,-1) -- (0,-1,-1);
			\draw (-0.5,-0.5,-0.5) node{$\Omega_0$};
			\draw (0.5,-0.7,1.1) node{$F_\varepsilon^4$};
			\draw[red] (0.15,-0.45,0.7) node{$\gamma_0$};
			\draw[blue] (0.28,-0.73,1.2) node{$\gamma$};
			\end{tikzpicture}
			\caption{Geometry $\Omega_\varepsilon^4$ with the positive feature $F_\varepsilon^4$.}\label{fig:geomcv3Dposangle}
		\end{center}
	\end{subfigure}
	\caption{$3$D geometries $\Omega_\varepsilon^i$, $i=1,2,3,4$.} \label{fig:geomcv3D}
\end{figure}

\begin{figure}
	\centering
	\begin{subfigure}{0.48\textwidth}
		\begin{center}
			\begin{tikzpicture}[scale=0.7]
			\begin{axis}[xmode=log, ymode=log, legend style={at={(0.5,-0.15)}, legend columns=3, anchor=north, draw=none}, xlabel=$\varepsilon$, ylabel=Error]
			\addplot[mark=x, red, thick] table [x=eps, y=errh1s, col sep=comma] {data/cv3dneg.csv};
			\addplot[mark=x, blue, densely dashed, mark options=solid, thick] table [x=eps, y=est, col sep=comma] {data/cv3dneg.csv};
			\addplot[mark=none, loosely dotted, thick] table [x=eps, y=epspower, col sep=comma] {data/cv3dneg.csv};
			\addplot[mark=o, red, thick] table [x=eps, y=errh1s, col sep=comma] {data/cv3dnegangle.csv};
			\addplot[mark=o, blue, densely dashed, mark options=solid, thick] table [x=eps, y=est, col sep=comma] {data/cv3dnegangle.csv};
			\addplot[mark=none, thick, densely dotted, thick] table [x=eps, y=eps52, col sep=comma] {data/cv3dnegangle.csv};
			\legend{$\left|u-u_0^{(1)}\right|_{1,\Omega_\varepsilon^1}$,$\mathcal E\left(u_0^{(1)}\right)$,$\mathcal O\left(\varepsilon^\frac{3}{2}\right)$, $\left|u-u_\mathrm d^{(2)}\right|_{1,\Omega_\varepsilon^2}$, $\mathcal E\left(u_\mathrm d^{(2)}\right)$, $\mathcal O\left(\varepsilon^\frac{5}{2}\right)$};
			\end{axis}
			\end{tikzpicture}
			\caption{Geometries $\Omega_\varepsilon^1$ and $\Omega_\varepsilon^2$ with a negative feature.} \label{fig:cv3Dneg}
		\end{center}
	\end{subfigure}
	~
	\begin{subfigure}{0.48\textwidth}
		\begin{center}
			\begin{tikzpicture}[scale=0.7]
			\begin{axis}[xmode=log, ymode=log, legend style={at={(0.5,-0.15)}, legend columns=3, anchor=north, draw=none}, xlabel=$\varepsilon$, ylabel=Error]
			\addplot[mark=x, red, thick] table [x=eps, y=errh1s, col sep=comma] {data/cv3dpos.csv};
			\addplot[mark=x, blue, densely dashed, mark options=solid, thick] table [x=eps, y=est, col sep=comma] {data/cv3dpos.csv};
			\addplot[mark=none, loosely dotted, thick] table [x=eps, y=epspower, col sep=comma] {data/cv3dpos.csv};
			\addplot[mark=o, red, thick] table [x=eps, y=err, col sep=comma] {data/cv3dposangle.csv};
			\addplot[mark=o, blue, densely dashed, mark options=solid, thick] table [x=eps, y=est, col sep=comma] {data/cv3dposangle.csv};
			\addplot[mark=none, thick, densely dotted, thick] table [x=eps, y=epspow, col sep=comma] {data/cv3dposangle.csv};
			\legend{$\left|u-u_\mathrm d^{(3)}\right|_{1,\Omega_\varepsilon^3}$, $\mathcal E\left(u_\mathrm d^{(3)}\right)$,$\mathcal O\left(\varepsilon^\frac{3}{2}\right)$, $\left|u-u_\mathrm d^{(4)}\right|_{1,\Omega_\varepsilon^4}$, $\mathcal E\left(u_\mathrm d^{(4)}\right)$, $\mathcal O\left(\varepsilon^\frac{5}{2}\right)$};
			\end{axis}
			\end{tikzpicture}
			\caption{Geometries $\Omega_\varepsilon^3$ and $\Omega_\varepsilon^4$ with a positive feature.}\label{fig:cv3Dpos}
		\end{center}
	\end{subfigure}
	\caption{Convergence of the error and of the estimator in $3$D domains with one feature.} \label{fig:cv3D}
\end{figure}

The results are presented in Fig.~\ref{fig:cv3Dneg}. Both the error and the estimator converge with respect to the size of the feature as $\varepsilon^\frac{3}{2}\propto |\gamma_0|^\frac{3}{4}$ in the first geometry $\Omega_\varepsilon^1$, and as $\varepsilon^\frac{5}{2} \propto |\gamma|^\frac{5}{4}$ in the second geometry $\Omega_\varepsilon^2$. Moreover, the effectivity index is indeed independent from the size of the feature since it remains nearly equal to $1.87$ and $1.92$, respectively, for all values of $\varepsilon$. That is, again as predicted by the theory since the estimator is both reliable (Theorem \ref{thm:upperbound}) and efficient up to oscillations (Theorem \ref{thm:lowerbound}), here in dimension three, the dependence of the estimator with respect to the size of the feature is explicit. \\

Let us now consider three-dimensional examples of geometries with a positive feature. Let $\Omega_0$, $\Gamma_D$, $f$, $h$, and $g$ be as before, and let $\Omega_\varepsilon^j := \mathrm{int}\left(\overline{\Omega_0} \cup \overline{F_\varepsilon^j}\right)$ for $j=3,4$ with 
\begin{align*}
F_\varepsilon^3 &:= \left\{ (x,y,z)\in\mathbb R^3: 0.5-\frac{\varepsilon}{2}<x<0.5+\frac{\varepsilon}{2}, 1<y<1+\varepsilon, 0<z<\varepsilon \right\}, \\
F_\varepsilon^4 &:= F_\varepsilon^3 + \left(0.5-\frac{\varepsilon}{2}, 0, 0\right),
\end{align*}
as in Fig.~\ref{fig:geomcv3Dpos} and Fig.~\ref{fig:geomcv3Dposangle}. Let $\Gamma_N := \partial \Omega_\varepsilon^j\setminus \overline{\Gamma_D}$. 
For each $j=3,4$, we consider the same Poisson problem (\ref{eq:originalpb}) as before, but solved in this $\Omega_\varepsilon^j$. We also solve its defeatured version (\ref{eq:simplpb}) in $\Omega_0$ with $g_0 \equiv 0$ on $\partial \Omega_0 \setminus \partial \Omega_\varepsilon$. Then we extend the defeatured solution to $F_\varepsilon^j$ by (\ref{eq:featurepb}) with $\tilde F := F_\varepsilon^j$. 

The results are presented in Fig.~\ref{fig:cv3Dpos}. As for the negative feature case, the error in $\Omega_0$, the error in $F_\varepsilon^j$ and the estimator converge with respect to the size of the feature as $\varepsilon^\frac{3}{2}\propto |\gamma_0|^\frac{3}{4}$ in the first geometry $\Omega_\varepsilon^3$, and as $\varepsilon^\frac{5}{2}\propto |\gamma_0|^\frac{5}{4}$ in the second geometry $\Omega_\varepsilon^4$. Moreover, the effectivity index is indeed almost independent from the size of the feature since it remains nearly equal to $3.10$ and $3.22$, respectively, for all values of $\varepsilon$. That is, as predicted by the theory since the estimator is both reliable (Theorem \ref{thm:genupperbound}) and efficient up to oscillations (Theorem \ref{thm:genlowerbound}), here in dimension three, the dependence of the estimator with respect to the size of the feature is explicit. Finally, and as in the two-dimensional case, we remark that the effectivity indices for the positive features are a little bit larger than the ones for the negative features.

\subsubsection{Effect of the choice of the defeatured problem data }
Let us study the effect of the choice of the defeatured problem data on the convergence of the defeaturing error and estimator. In particular, we will see that in the example of a geometry with one negative feature $F$, the convergence of the error and the estimator crucially depends on the value of $\overline{\left(g+\displaystyle\frac{\partial u_0}{\partial \mathbf n_F}\right)}^\gamma$. As seen in Remark \ref{rmk:compatcondneg}, this value only depends on the Neumann boundary conditions $g$ on $\gamma$ and $g_0$ on $\gamma_0$, and on the extension of the right hand side $f$ in $F$. This means that one can obtain an optimal convergence rate of the defeaturing error by wisely choosing the defeatured data $g_0$ and $f$, considering the original data $g$ (if possible, to satisfy the compatibility condition (\ref{eq:compatcondneg})), so that the second term of the estimator in (\ref{eq:negestimator}) converges faster than the first one. The same observation can be made in the positive feature case. 

To show this, let $\varepsilon=\displaystyle \frac{10^{-2}}{2^k}$ for $k=0,1,\ldots,6$. We consider a 2D geometry with one negative feature. More precisely, let $\Omega_0$ be the disk centered in $(0,0)$ of radius $1$, let $F_\varepsilon$ be the disk centered in $(0,0)$ of radius $\varepsilon$, and let $\Omega_\varepsilon := \Omega_0\setminus F_\varepsilon$, as represented in Fig.~\ref{fig:fouriergeom}. We solve Poisson problem (\ref{eq:originalpb}) in $\Omega_\varepsilon$ with $f\equiv 1$ in $\Omega_\varepsilon$, $h\equiv 0$ on $\Gamma_D := \partial \Omega_0$, and we choose different Neumann data $g=g_i$ on $\partial F_\varepsilon$ for $i=1,\ldots,4$, where $g_1 \equiv 0$, $g_2 \equiv 1$, $g_3 \equiv \varepsilon^{-1}$, and  $g_4 \equiv \varepsilon^{-3}$. Then we solve the defeatured problem (\ref{eq:simplpb}) in $\Omega_0$, for which we need to choose an extension of $f$ in $F_\varepsilon$, that we still call $f$. This extension should somehow mimic the behavior of the Neumann data $g$, as required by the compatibility condition, but instead of that, we choose the trivial extension $f\equiv 1$ in all four cases, and we will verify whether this is always a good choice or not. For $i=1,\ldots,4$, we call $u^{(i)}$ and $u_0^{(i)}$ the solutions of (\ref{eq:originalpb}) and (\ref{eq:simplpb}), respectively. 

\begin{figure}
	\centering
	\begin{tikzpicture}[scale=0.85]
	\begin{axis}[xmode=log, ymode=log, legend style={at={(1.4,0.67)}, legend columns=2, anchor=north, draw=none}, xlabel=$\varepsilon$, ylabel=Error]
	\addplot[mark=x, red, thick] table [x=eps, y=errh1s0, col sep=comma] {data/cvdiffNeumann.csv};
	\addplot[mark=x, blue, densely dashed, mark options=solid, thick] table [x=eps, y=est0, col sep=comma] {data/cvdiffNeumann.csv};
	\addplot[mark=o, red, thick] table [x=eps, y=errh1s1, col sep=comma] {data/cvdiffNeumann.csv};
	\addplot[mark=o, blue, densely dashed, mark options=solid, thick] table [x=eps, y=est1, col sep=comma] {data/cvdiffNeumann.csv};
	\addplot[mark=+, red, thick] table [x=eps, y=errh1sepsmp1, col sep=comma] {data/cvdiffNeumann.csv};
	\addplot[mark=+, blue, densely dashed, mark options=solid, thick] table [x=eps, y=estepsmp1, col sep=comma] {data/cvdiffNeumann.csv};
	\addplot[mark=*, red, thick] table [x=eps, y=errh1sepsmp3, col sep=comma] {data/cvdiffNeumann.csv};
	\addplot[mark=*, blue, densely dashed, mark options=solid, thick] table [x=eps, y=estepsmp3, col sep=comma] {data/cvdiffNeumann.csv};
	\legend{$\left|u^{(1)}-u_0^{(1)}\right|_{1,\Omega_\varepsilon}$,$\mathcal E\left(u_0^{(1)}\right)$,
		$\left|u^{(2)}-u_0^{(2)}\right|_{1,\Omega_\varepsilon}$, $\mathcal E\left(u_0^{(2)}\right)$,
		$\left|u^{(3)}-u_0^{(3)}\right|_{1,\Omega_\varepsilon}$,$\mathcal E\left(u_0^{(3)}\right)$,
		$\left|u^{(4)}-u_0^{(4)}\right|_{1,\Omega_\varepsilon}$,$\mathcal E\left(u_0^{(4)}\right)$};
	\end{axis}
	\end{tikzpicture}
	\caption{Convergence of the error and of the estimator with different Neumann boundary conditions.} \label{fig:choicedefeatured}
\end{figure}
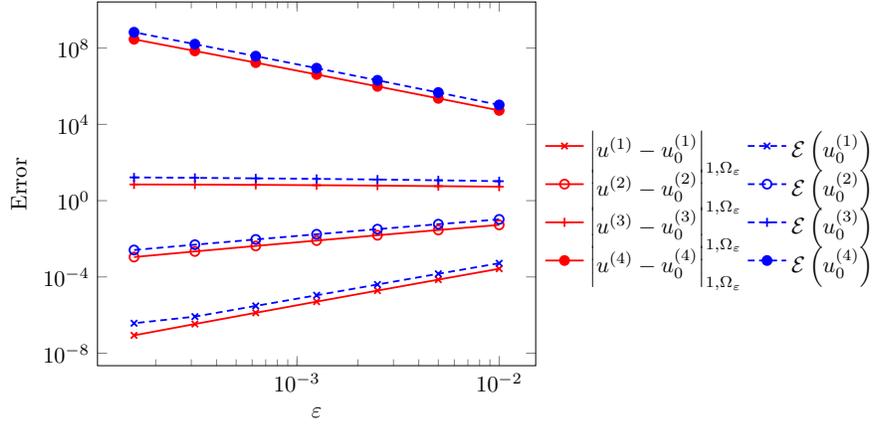

The results are presented in Fig.~\ref{fig:choicedefeatured}. As we can see and as expected, the proposed estimator follows the convergence of the defeaturing error in all four cases. Moreover, the effectivity index always remains the same, as we were also expecting since the shape of the geometry never changes. However, we see that the trivial extension of $f$ in $F$ is not always a good choice since it slows down the convergence when $g=g_2$, it does not permit the error to decrease with $\varepsilon$ when $g=g_3$, and it even implies the error to explode with $\varepsilon$ when $g=g_4$. The explanation is present in the expression of the estimator in (\ref{eq:negestimator}): indeed, in the case $g=g_1$, the first term of (\ref{eq:negestimator}) is dominant, while in the other cases, the second term dominates because of the value of $\overline{\left(g+\displaystyle\frac{\partial u_0}{\partial \mathbf n_F}\right)}^\gamma$ due to the bad choice of $f$ in $F_\varepsilon$. 

Consequently, the estimator not only tells us whether a feature is important for the given problem at hand, but it also tells us whether the choice of the defeaturing problem data is right or should be reconsidered. 

\subsection{Non-Lipschitz features: fillets and rounds} \label{sec:nonlipschitz}
Classical features one finds in design for manufacturing are fillets and rounds, that allow for example the use of round-tipped end mills to cut out some material. However, when considered as features isolated from the rest of the domain, fillets and rounds are non-Lipschitz feature domains. The following numerical examples analyze these types of features, and show that our estimator manages to capture the behavior of the defeaturing error even if the domains are not Lipschitz.

\subsubsection{Round: a negative non-Lipschitz feature}
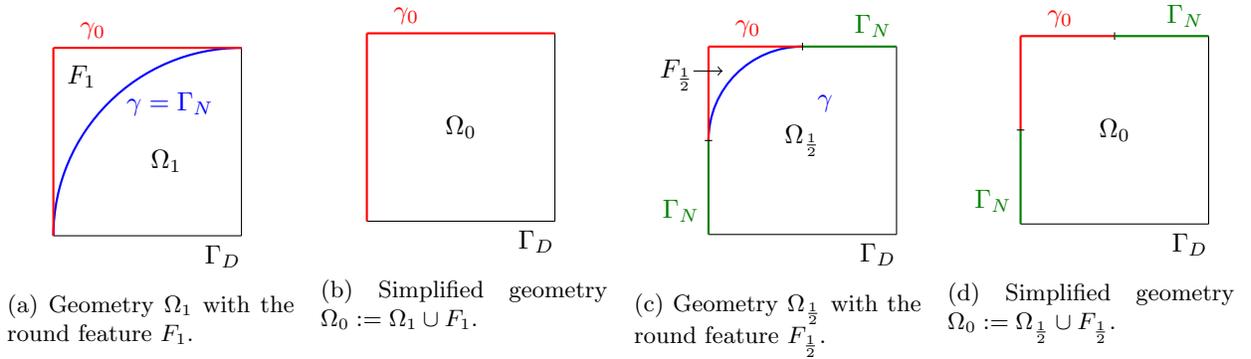
\begin{figure}[b]
	\centering
	\begin{subfigure}{0.23\textwidth}
		\begin{center}
			\begin{tikzpicture}[scale=2.5]
			\draw (0,0) -- (1,0) ;
			\draw (1,0) -- (1,1) ;
			\draw[blue,thick] (1, 1) arc (90:180:1cm);
			\draw[red,thick] (0,0) -- (0,1) ;
			\draw[red,thick] (0,1) -- (1,1) ;
			\draw[red] (0.21,1) node[above]{$\gamma_0$};
			\draw[blue] (0.62,0.7) node{$\gamma=\Gamma_N$};
			\draw (0.6,0.4) node{$\Omega_1$} ;
			\draw (0.15,0.85) node{$F_1$} ;
			\draw (0.9,0) node[below]{$\Gamma_D$} ; 
			\end{tikzpicture}
			\caption{Geometry $\Omega_1$ with the round feature $F_1$.}
		\end{center}
	\end{subfigure}
	~
	\begin{subfigure}{0.23\textwidth}
		\begin{center}
			\begin{tikzpicture}[scale=2.5]
			\draw (0,0) -- (1,0) ;
			\draw (1,0) -- (1,1) ;
			\draw[red,thick] (0,0) -- (0,1) ;
			\draw[red,thick] (0,1) -- (1,1) ;
			\draw[red] (0.21,1) node[above]{$\gamma_0$};
			\draw (0.5,0.5) node{$\Omega_0$} ;
			\draw (0.9,0) node[below]{$\Gamma_D$} ; 
			\end{tikzpicture}
			\caption{Simplified geometry $\Omega_0 := \Omega_1 \cup F_1$.\\} 
		\end{center}
	\end{subfigure}
	~
	\begin{subfigure}{0.23\textwidth}
		\begin{center}
			\begin{tikzpicture}[scale=2.5]
			\draw (0,0) -- (1,0) ;
			\draw (1,0) -- (1,1) ;
			\draw[blue,thick] (0.5, 1) arc (90:180:0.5cm);
			\draw[red,thick] (0,0.5) -- (0,1) ;
			\draw[red,thick] (0,1) -- (0.5,1) ;
			\draw[thick, green!50!black] (0.5,1) -- (1,1);
			\draw[thick, green!50!black] (0,0) -- (0,0.5) ;
			\draw[red] (0.21,1) node[above]{$\gamma_0$};
			\draw[blue] (0.62,0.7) node{$\gamma$};
			\draw (0.5,0.5) node{$\Omega_{\frac{1}{2}}$} ;
			\draw (-0.03,0.85) node[left]{$F_{\frac{1}{2}}$} ;
			\draw[->] (-0.08,0.87) -- (0.075,0.87) ;
			\draw (0.9,0) node[below]{$\Gamma_D$} ; 
			\draw[green!50!black] (0,0.125) node[left]{$\Gamma_N$} ; 
			\draw[green!50!black] (0.875,1) node[above]{$\Gamma_N$} ; 
			\draw (0.5,0.98) -- (0.5,1.02) ;
			\draw (-0.02,0.5) -- (0.02,0.5) ;
			\end{tikzpicture}
			\caption{Geometry $\Omega_{\frac{1}{2}}$ with the round feature $F_{\frac{1}{2}}$.}
		\end{center}
	\end{subfigure}
	~
	\begin{subfigure}{0.23\textwidth}
		\begin{center}
			\begin{tikzpicture}[scale=2.5]
			\draw (0,0) -- (1,0) ;
			\draw (1,0) -- (1,1) ;
			\draw[thick, green!50!black] (0,0) -- (0,0.5) ;
			\draw[red,thick] (0,0.5) -- (0,1);
			\draw[red,thick] (0,1) -- (0.5,1) ;
			\draw (0.5,0.98) -- (0.5,1.02) ;
			\draw (-0.02,0.5) -- (0.02,0.5) ;
			\draw[thick, green!50!black] (0.5,1) -- (1,1);
			\draw[red] (0.21,1) node[above]{$\gamma_0$};
			\draw (0.5,0.5) node{$\Omega_0$} ;
			\draw (0.9,0) node[below]{$\Gamma_D$} ; 
			\draw[green!50!black] (0,0.125) node[left]{$\Gamma_N$} ; 
			\draw[green!50!black] (0.875,1) node[above]{$\Gamma_N$} ; 
			\end{tikzpicture}
			\caption{Simplified geometry $\Omega_0 := \Omega_{\frac{1}{2}} \cup F_{\frac{1}{2}}$.\\} 
		\end{center}
	\end{subfigure}
	\caption{Geometries with a round.} \label{fig:negfillet}
\end{figure}

Let us first consider the case of a round, that is, the rounding process creates a convex domain. For $R\in (0,1]$, and as represented in Fig.~\ref{fig:negfillet}, let 
\begin{align*}
\Omega_R := &(R,1-R)+\left\{\big(r\cos\left(\theta\right),r\sin\left(\theta\right)\big) \in \mathbb R^2: 0\leq r < R, \,\frac{\pi}{2}< \theta< \pi\right\} \\
& \cup (0,1)\times(0,R] \cup [R,1)\times[1-R,1),
\end{align*}
$\Omega_0 := (0,1)^2$, and $F_R:= \Omega_0\setminus\overline{\Omega_R}$. We remark that $F_R$ is not a Lipschitz domain, that is, this case is not covered by the presented theory. We consider Poisson problem (\ref{eq:originalpb}) with $f\equiv 0$ in $\Omega_R$, $h(x,y):= x^2\left(1-x\right)^2+y^2(1-y)^2$ on 
$$\Gamma_D := \big\{ (x,0), (1,y)\in \mathbb R^2: 0\leq x,y < 1 \big\}.$$
and $g\equiv 0$ on $\Gamma_N:= \partial \Omega_R \setminus \overline{\Gamma_D}$. 
We solve the defeatured Poisson problem (\ref{eq:simplpb}) with the same data and $g_0\equiv 0$ on $\gamma_0:= \partial F_R \setminus \overline{\Gamma_N}.$

The results are presented in Table \ref{table:negfillet}, and for all considered values of $R$, we indeed have $\big|u-u_0\big|_{1,\Omega_R}\lesssim \mathcal E(u_0)$ with a low effectivity index. In particular, the effectivity index is almost the same for all considered values of $R$ in $(0,0.5)$ while it is slightly larger for $R\in(0.5,1)$, since the geometries for $R\in(0,0.5)$ are almost an homothety of one another, while it is not when $R>0.5$ because of the closeness of the boundary $\Gamma_D$ from the boundary $\gamma$. This example shows that our estimator estimates well the defeaturing error even if the feature is not a Lipschitz domain, and it confirms the fact that we can indeed have a feature that is attached to the Dirichlet boundary, that is $\overline{\gamma}\cap\overline{\Gamma_D}\neq \emptyset$ but ${\gamma}\cap{\Gamma_D}= \emptyset$, as in the case $R=1$. 

\begin{table}
	\centering
	{\def\arraystretch{1.2}
		\begin{tabular}{@{}cccc@{}}
			\hline\\ [-1em]
			$R$ & $\mathcal{E}(u_0)$ & $\big|u-u_0\big|_{1,\Omega_R}$ & Effectivity index\\ [-1em] \\
			\hline
			$1$ & $6.83\cdot 10^{-3}$ & $2.37\cdot 10^{-3}$ & $2.88$\\
			$0.99$ & $6.48\cdot 10^{-3}$ & $2.27\cdot 10^{-3}$ & $2.85$\\
			$0.5$ & $3.36\cdot 10^{-4}$ & $1.26\cdot 10^{-4}$ & $2.67$\\
			$0.25$ & $2.08\cdot 10^{-5}$ & $7.77\cdot 10^{-6}$ & $2.67$\\
			$0.125$ & $1.30\cdot 10^{-6}$ & $4.86\cdot 10^{-7}$ & $2.67$\\
			\hline
	\end{tabular}}
	\caption{Results for the geometry with a round.\label{table:negfillet} }
\end{table}

\subsubsection{Fillet: a positive non-Lipschitz feature} \label{sec:fillet}
Now, let us consider the case of a fillet, that is, the filleting process creates a non-convex domain. Since the fillet $F$ is a complex positive feature we possibly do not want to mesh, we will consider two different feature extensions $\tilde F^1$ and $\tilde F^2$ containing $F$ to solve the extension problem (\ref{eq:featurepb}). We will compare them, and we will also compare the result with the one obtained without feature extension, that is for $\tilde F = F$. In particular, we remark again that $F$ is not a Lipschitz domain, that is, this example is not covered by the presented theory. 
As illustrated in Fig.~\ref{fig:posfillet}, let 
\begin{align*}
& \Omega_0 := (0,1)^2\setminus \left[\frac{1}{2},1\right]^2, \hspace{1cm}
\tilde F^1:= \left(\frac{1}{2}, 1\right)^2,\\
& \tilde F^2:= \tilde F^1 \setminus \left\{  \big(1+r\cos(\theta), 1+r\sin(\theta)\big)\in\mathbb{R}^2: 0\leq r\leq\frac{1}{4}, \pi\leq\theta\leq \frac{3\pi}{2}\right\},\\
& F:= \tilde F^1 \setminus \left\{  \big(1+r\cos(\theta), 1+r\sin(\theta)\big)\in\mathbb{R}^2: 0\leq r\leq\frac{1}{2}, \pi\leq \theta\leq \frac{3\pi}{2}\right\}, \\
& \Omega := \mathrm{int}\left(\overline{\Omega_0}\cup \overline{F}\right).
\end{align*}
$\tilde F^1$ is the bounding box of $F$, it is therefore a very simple geometry but $\left|\tilde F^1\right| \gg |F|$. At the contrary, $\tilde F^2$ is a little bit more complex, but $\left|\tilde F^2\right| \approx |F|$. 
\begin{figure}
	\centering
	\begin{subfigure}[b]{0.3\textwidth}
		\begin{center}
			\begin{tikzpicture}[scale=3]
			\draw (0,0) -- (1,0);
			\draw (1,0) -- (1,0.5);
			\draw[blue,thick] (0.5, 1) arc (180:270:0.5cm);
			\draw (0,0) -- (0,1);
			\draw (0,1) -- (0.5,1);
			\draw[blue,thick] (0.58,0.6) node{$\gamma$};
			\draw (0.4,0.4) node{$\Omega$} ;
			\end{tikzpicture}
			\caption{Exact original geometry {$\Omega~=~\mathrm{int}\left(\overline{\Omega_0}\cup\overline{F}\right)$.}}
		\end{center}
	\end{subfigure}
	~
	\begin{subfigure}[b]{0.3\textwidth}
		\begin{center}
			\begin{tikzpicture}[scale=3]
			\draw (0,0) -- (1,0);
			\draw[red,thick] (0.5,0.5) -- (0.5,1);
			\draw (1,0) -- (1,0.5);
			\draw[red,thick] (0.5,0.5) -- (1,0.5);
			\draw[green!60!black!,thick] (0.5,1) -- (1,1);
			\draw[green!60!black!,thick] (1,0.5) -- (1,1);
			\draw[blue,thick] (0.5, 1) arc (180:270:0.5cm);
			\draw (0,0) -- (0,1);
			\draw (0,1) -- (0.5,1);
			\draw[red] (0.75,0.5) node[below]{$\gamma_0$};
			\draw[blue] (0.58,0.6) node{$\gamma$};
			\draw[green!60!black!] (1.1,0.65) node[above]{$\tilde\gamma$};
			\draw (0.3,0.3) node{$\Omega_0$} ;
			\draw[gray] (0.85,0.85) node{$\tilde F^1$} ;
			\fill[gray, fill, opacity=0.3] (0.5,0.5) rectangle (1,1);
			\end{tikzpicture}
			\caption{Simplified geometry $\Omega_0$ and feature extension $\tilde F^1$.}
		\end{center}
	\end{subfigure}
	~
	\begin{subfigure}[b]{0.3\textwidth}
		\begin{center}
			\begin{tikzpicture}[scale=3]
			\fill[gray, fill, opacity=0.3] (0.5,0.5) rectangle (1,1);
			\fill[white] (0.7,1) arc (180:270:0.3cm) -- (1.01,1.01) -- cycle;
			\draw (0,0) -- (1,0);
			\draw[red,thick] (0.5,0.5) -- (0.5,1);
			\draw (1,0) -- (1,0.5);
			\draw[red,thick] (0.5,0.5) -- (1,0.5);
			\draw[green!60!black!,thick] (0.5,1) -- (0.7,1);
			\draw[green!60!black!,thick] (1,0.5) -- (1,0.7);
			\draw[blue,thick] (0.5, 1) arc (180:270:0.5cm);
			\draw[green!60!black!,thick] (0.7, 1) arc (180:270:0.3cm);
			\draw (0,0) -- (0,1);
			\draw (0,1) -- (0.5,1);
			\draw[red] (0.75,0.5) node[below]{$\gamma_0$};
			\draw[blue] (0.58,0.6) node{$\gamma$};
			\draw[green!60!black!] (0.85,0.85) node{$\tilde\gamma$};
			\draw (0.3,0.3) node{$\Omega_0$} ;
			\draw[gray] (0.615,0.9) node{$\tilde F^2$} ;
			\end{tikzpicture}
			\caption{Simplified geometry $\Omega_0$ and feature extension $\tilde F^2$.}
		\end{center}
	\end{subfigure}
	\caption{Geometry $\Omega = \mathrm{int}\left(\overline{\Omega_0} \cup \overline{F}\right)$ with a fillet $F$, and two possible extended features.} \label{fig:posfillet}
\end{figure}
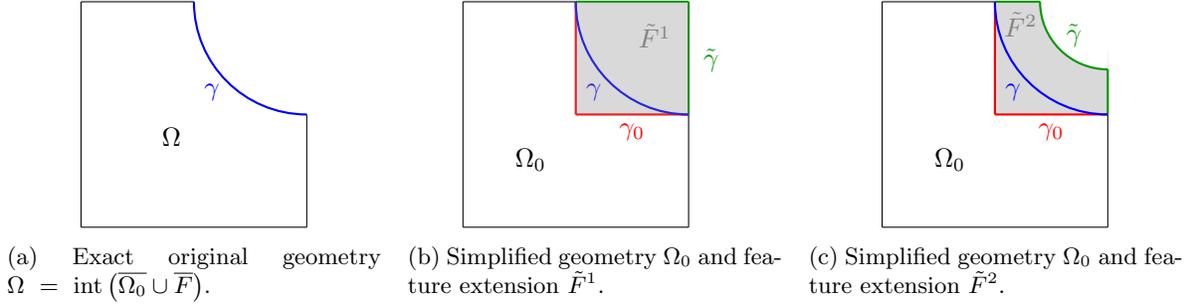
\begin{table}
	\centering
	{\def\arraystretch{1.2}
		\begin{tabular}{@{}cccccc@{}} \hline\\ [-1em]
			Extension & $\mathcal{E}(u_\mathrm d)$ & $\big|u-u_\mathrm d\big|_{1,\Omega}$ & $\big|u-u_0\big|_{1,\Omega_0}$ & $\big|u-\tilde u_0\big|_{1,F}$ & Effectivity index\\ [-1em] \\
			\hline
			$\tilde F^1$ & $1.78\cdot 10^0$ & $2.92\cdot 10^{-1}$ & $1.69\cdot 10^{-1}$ & $2.39\cdot 10^{-1}$ & $6.11$\\
			$\tilde F^2$ & $1.71\cdot 10^0$ & $2.89\cdot 10^{-1}$ & $1.69\cdot 10^{-1}$ & $2.34\cdot 10^{-1}$ & $5.93$\\
			$F$ & $1.33\cdot 10^0$ & $2.69\cdot 10^{-1}$ & $1.69\cdot 10^{-1}$ & $2.01\cdot 10^{-1}$ & $4.94$\\
			\hline
	\end{tabular}}
	\caption{Results for the geometry with a fillet. \label{table:posfillet}}
\end{table}

We consider Poisson problem (\ref{eq:originalpb}) with $f\equiv 0$ in $\Omega$, $$h(x,y):= \cos\left(\pi x\right)+10\cos(5\pi x)$$ on 
$\Gamma_D := \big\{ (x,0), \in \mathbb R^2: 0\leq x\leq 1 \big\},$
and $g\equiv 0$ on $\Gamma_N=\partial \Omega\setminus \overline{\Gamma_D}.$
We solve the defeatured Poisson problem (\ref{eq:simplpb}) with the same data and with $g_0\equiv 0$ on $\gamma_0:= \partial \Omega_0\cap\partial F$. Then, we solve the Dirichlet extension problem (\ref{eq:featurepb}) first in $\tilde F^1$, and secondly in $\tilde F^2$, with $\tilde g \equiv 0$ on $\tilde \gamma:= \partial \tilde F^1 \setminus \overline{\gamma_0}$ and $\tilde \gamma:= \partial \tilde F^2 \setminus \overline{\gamma_0}$, respectively. Finally, we also solve (\ref{eq:featurepb}) by taking $\tilde F:=F$ itself. 

The results are presented in Table \ref{table:posfillet}, and we indeed have $\big|u-u_\mathrm d\big|_{1,\Omega}\lesssim \mathcal E(u_\mathrm d)$ with a reasonable effectivity index in all three cases. Note that the effectivity index is higher in this case than in the case of a round since not only the geometry $\Omega$ but also the feature $F$ are simplified, respectively by $\Omega_0$ and by $\tilde F^1$ or $\tilde F^2$. 
Moreover, $F$ contains the extension $\tilde F^1$ that itself contains the extension $\tilde F^2$, and this is reflected both on the defeaturing error and on the estimator. Indeed, both the error and the estimator are larger when the considered extension is $\tilde F^1$ instead of $\tilde F^2$, and smaller when $\tilde F=F$, but the effectivity index is not affected: it is different because the shapes of $F$, $\tilde F^1$ and $\tilde F^2$ are different, not because an extension is bigger than the other one, as we have seen in the numerical examples of Sec.~\ref{sec:impactfeat} and Sec.~\ref{sec:errcv}. 
Furthermore, the effectivity index on the fillet is larger than the one on the round: as already remarked in Sec.~\ref{sec:errcv}, the effectivity index is in general larger for positive features than for negative ones.  {Finally, the effectivity indices for both the round and the fillet are larger than for the other negative and positive features, respectively, and this can come from the fact that rounds and fillets are non-Lipschitz features.}

{\begin{remark}[Effectivity indices] Let us summarize the observations made on the behavior of the effectivity indices. 
		\begin{itemize}
			\item The effectivity index of every test case is independent from the measure of the feature (see Sec.~\ref{sec:cv2d} and \ref{sec:cv3d}), but it depends on their shape (see Sec.~\ref{sec:featshape}). 
			\item The observed effectivity indices are small when Lipschitz features are considered: in both two and three dimensions, the value of the effectivity index ranges between $1$ and $4$ (see Sec.~\ref{sec:impactfeat} and \ref{sec:errcv}), in general with smaller values for negative features than for positive ones. While we have no evidence of it, the difference possibly comes from the smoothness of the defeatured solution $u_0$. Indeed, in the positive feature case, a $\gamma_0$-Dirichlet extension of $u_0$ is necessary to define the error in the whole geometry $\Omega$. This extension is in $H^1(\Omega)$ by definition, but its gradient jumps at the boundary $\gamma_0$, making it possibly less regular than the defeatured solution that one can have in the negative feature case. 
			\item The observed effectivity indices are larger when non-Lipschitz features are considered, but such geometries are not considered in the presented theory. The special cases of rounds and fillets are analyzed in Sec.~\ref{sec:nonlipschitz}, and the observed effectivity indices in those cases are smaller than $3$ and $5$, respectively. 
			\item The observed effectivity indices in the case of an extended positive feature, that is, a feature for which $\tilde F \supsetneq F$, are larger than in the case of a positive feature, while still remaining relatively small (see Sec. \ref{sec:fillet}). Indeed, in the former case, not only the geometry $\Omega$ but also the feature $F$ are simplified, respectively by $\Omega_0$ and by $\tilde F$. 
		\end{itemize}
\end{remark}}

\section{Conclusions} \label{sec:ccl}
We have introduced a novel \textit{a posteriori} error estimator for analysis-aware geometric defeaturing in the context of {Poisson} equation on geometries of arbitrary dimension. We have demonstrated its reliability and efficiency up to oscillations, and tested it on an extensive set of numerical experiments: in all of them, we have observed that the proposed estimator acts as an excellent approximation of the true error. We have considered geometries with either a negative, a positive or a general complex feature, and we have verified that our estimator is not only driven by geometrical considerations, but also by the differential problem at hand. The proposed estimator is able to weight the impact of defeaturing in energy norm, and it is explicit with respect to the size of the geometrical features.
Finally, our estimator is simple and computationally cheap: once the solution of the defeatured problem is computed, it only requires the computation of the solution of a local extension problem if the feature has a positive component, and of boundary integrals. 

In this paper, the analysis is performed in continuous spaces, and for one feature only. The extension to a few features is not hard as the indicator is merely additive, but the extension to several features as well as the development of a fully adaptive scheme taking into account the discretization and the defeaturing errors will be the object of our subsequent work. {Finally, this work focuses on the global energy norm of the error, which is an important first step to understand the impact of defeaturing in analysis. Studying defeaturing using a local goal-oriented error measure would be a further step attracting a broader industrial interest.}

\appendix
\section{Appendix} \label{sec:appendix}
In this section, we state lemmas that are used throughout the paper, and the symbol $\lesssim$ will be used to mean any inequality which does not depend on the size of the considered domains, but which can depend on their shape. {We assume that all considered domains are Lipschitz.}

\begin{mylemma}[Poincar\'e I] \label{lemma:poincarebd}
	Let $\omega$ be an $(n-1)$-dimensional manifold in $\mathbb{R}^{n}$ that is isotropic according to Definition \ref{as:isotropy}. 
	Then for all $v\in H^{\frac{1}{2}}(\omega)$,
	$$\|v-\overline{v}\|_{0,\omega} \lesssim \left|\omega\right|^\frac{1}{2(n-1)} |v|_{\frac{1}{2}, \omega},$$
	where $\overline{v} := \displaystyle\frac{1}{\left|\omega\right|}\int_{\omega} v\,\mathrm ds$ is the average of $v$ on $\omega$. 
\end{mylemma}

\begin{proof}
	Let $v\in H^\frac{1}{2}(\omega)$. Recall that since $\omega$ is an $(n-1)$-dimensional manifold in $\mathbb{R}^n$, then
	$$|v|^2_{\frac{1}{2},\omega} = \int_{\omega}\int_{\omega}\frac{\big(v(x)-v(y)\big)^2}{|x-y|^n}\,\mathrm{d}x\,\mathrm{d}y.$$
	Moreover, let $$\omega_{\mathrm{max}} := \arg\displaystyle\max_{\omega_c \in \text{conn}(\omega)} \big(\mathrm{diam}\left(\omega_c\right)\big).$$
	Then since $\omega$ is isotropic,
	$$\mathrm{diam}\left(\omega\right) \lesssim \mathrm{diam}\left(\omega_{\mathrm{max}}\right) \lesssim \left| \omega_{\mathrm{max}} \right|^\frac{1}{n-1} \leq |\omega|^\frac{1}{n-1}.$$
	Therefore, 
	\begin{align*}
	\|v-\bar{v}\|_{0,\omega}^2 &= \int_{\omega}\left(v(x)-\frac{1}{\left|\omega\right|} \int_{\omega}v(y)\,\mathrm{d}y \right)^2\,\mathrm{d}x \\
	& = \frac{1}{|\omega|^2} \int_{\omega}\left[\int_{\omega}\big(v(x)-v(y)\big)\,\mathrm{d}y \right]^2\,\mathrm{d}x \\
	& \leq \frac{1}{|\omega|^2} \int_{\omega} \left[  |\omega| \int_{\omega} \big(v(x)-v(y)\big)^2\,\mathrm{d}y\right] \,\mathrm{d}x \\
	& = \frac{1}{|\omega|} \int_{\omega} \int_{\omega} \frac{\big(v(x)-v(y)\big)^2}{|x-y|^n}|x-y|^n\,\mathrm{d}y \,\mathrm{d}x \\
	& \leq \frac{\mathrm{diam}\left(\omega\right)^n}{|\omega|} \int_{\omega} \int_{\omega} \frac{\big(v(x)-v(y)\big)^2}{|x-y|^n}\,\mathrm{d}y \,\mathrm{d}x 
	\lesssim |\omega|^\frac{1}{n-1} |v|^2_{\frac{1}{2}, \omega}.
	\end{align*}
\end{proof}

\begin{mylemma}[] \label{lemma:savare}
	Let $D\subset \mathbb{R}^n$, and let $\omega\subset \partial D$ be a $(n-1)$-dimensional manifold in $\mathbb{R}^n$. Then for all $v\in H^\frac{1}{2}(\partial D)$,  if we define $\eta\in\mathbb{R}$ as the unique solution of $\eta = -\log(\eta)$, 
	$$\| v \|_{0,\omega} \lesssim c_\omega |\omega|^\frac{1}{2(n-1)} \|v\|_{\frac{1}{2}, \partial D}, \quad \text{where } \quad c_{\omega} := \begin{cases}
	\max\big(\hspace{-0.05cm}\left|\log\left(|\omega|\right)\right|, \eta \big)^\frac{1}{2} & \text{ if } n = 2; \\
	1 & \text{ if } n = 3. 
	\end{cases}$$
	The hidden constant is independent from the measure of $\omega$. 
\end{mylemma}
\begin{proof}
	Let $v\in H^\frac{1}{2}(\partial D)$. By Sobolev embedding, it is well known that $H^\frac{1}{2}(\partial D)$ can be continuously embedded in $L^{2p}(\partial D)$ for every $1\leq p<\infty$ if $n=2$, or for every $1\leq p\leq 2$ if $n=3$. Therefore, by H\"older inequality, 
	\begin{equation} \label{eq:averageerror}
	\|v \|^2_{0,\omega} = \sum_{\omega_c\in \text{conn}(\omega)} \|v \|^2_{0,\omega_c} \leq \sum_{\omega_c\in \text{conn}(\omega)}  \left|\omega_c\right|^{1-\frac{1}{p}} \|v\|^2_{L^{2p}\left(\omega_c\right)} \lesssim |\omega|^{1-\frac{1}{p}} \|v\|^2_{L^{2p}(\partial D)}.
	\end{equation}
	If $n=3$, by taking $p=2$ in (\ref{eq:averageerror}) and by Sobolev embedding, 
	\begin{equation*} 
	\|v \|_{0,\omega}^2 \lesssim |\omega|^{\frac{1}{2}} \|v\|^2_{\frac{1}{2},\partial D} = c_{\omega}^2|\omega|^\frac{1}{n-1} \|v\|^2_{\frac{1}{2},\partial D}.
	\end{equation*}
	Let us now consider the case $n=2$. Thanks to Ref.~\cite[Lemma~5.1]{benbelgacembuffamaday}, it is known that for all $q\in [2,\infty)$ and all $v\in H^\frac{1}{2}(0,2\pi)$, 
	$$\|v\|_{L^q(0,2\pi)} \leq c \sqrt{q} \|v\|_{\frac{1}{2}, (0,2\pi)},$$
	where $c$ is a constant independent from $q$. 
	Then by definition of the norms $L^q$ and $H^\frac{1}{2}$ on a manifold (see Ref.~\cite[Sec.~1.3.3]{grisvard}), we obtain
	$\|v\|_{L^{2p}(\partial D)} \leq \tilde c \sqrt{p} \|v\|_{\frac{1}{2},\partial D}$, where $\tilde c$ is a constant independent from $p$. So by taking $p=\max\big(\hspace{-0.05cm}\left|\log\left(|\omega|\right)\right|, \eta \big) = c_\omega^2$ in (\ref{eq:averageerror}), then $|\omega|^{-\frac{1}{p}} \leq e$ and thus
	\begin{align*} 
	\|v \|_{0,\omega}^2 &\lesssim |\omega|^{1-\frac{1}{p}} p\, \|v\|_{\frac{1}{2},\partial D}^2 \lesssim |\omega| c_\omega^2 \|v\|^2_{\frac{1}{2},\partial D} = c_{\omega}^2|\omega|^\frac{1}{n-1} \|v\|^2_{\frac{1}{2},\partial D}. 
	\end{align*}
\end{proof}

\begin{mylemma} [Poincar\'e II]\label{lemma:poincarebd2}
	Let $\omega$ be an $(n-1)$-dimensional manifold in $\mathbb{R}^{n}$ that is isotropic according to Definition \ref{as:isotropy}. 
	Then for all $v\in H^{\frac{1}{2}}_{00}(\omega)$, 
	$$\|v\|_{0,\omega} \lesssim \left|\omega\right|^\frac{1}{2(n-1)} \|v\|_{H^{1/2}_{00}(\omega)}.$$
\end{mylemma}
\begin{proof}
	Let $D\subset \mathbb R^n$ and $\varphi \subset \mathbb R^n$ such that $\partial D = \overline\omega\cup\overline\varphi$ and $\omega\cap \varphi = \emptyset$. 
	Let $v\in H_{00}^\frac{1}{2}(\omega)$. First, suppose that $\omega$ is connected. Then from Ref.~\cite[Proposition~2.4]{poincare2}, since $\omega$ is isotropic and if we let $v^\star \in H^\frac{1}{2}(\partial D)$ be the extension of $v$ by $0$,
	\begin{equation} \label{eq:poincare2connected}
	\left\| v \right\|_{0,\omega}^2 \lesssim \left|\omega\right|^\frac{1}{n-1} \big|v^\star\big|^2_{\frac{1}{2},\partial D} \leq \left|\omega\right|^\frac{1}{n-1} \left\|v^\star\right\|^2_{\frac{1}{2},\partial D} = \left|\omega\right|^\frac{1}{n-1} \|v\|^2_{H_{00}^{1/2}(\omega)}.
	\end{equation}
	Now, if $\omega$ is not connected, then dist$\left(s,\partial \omega_c\right)$ = dist$(s,\partial \omega)$ for all $\omega_c\in \text{conn}(\omega)$ and for all $s\in \omega_c$.
	Thus 
	\begin{align*}
	\sum_{\omega_c\in \text{conn}(\omega)} \left\|v\vert_{\omega_c}\right\|^2_{H_{00}^{1/2}\left(\omega_c\right)} &= \sum_{\omega_c\in \text{conn}(\omega)} \left( \big\|v\vert_{\omega_c}\big\|^2_{\frac{1}{2},\omega_c} + \big|v\vert_{\omega_c}\big|^2_{H_{00}^{1/2}(\omega_c)} \right) \\
	& \lesssim \|v\|_{\frac{1}{2},\omega}^2 + \sum_{\omega_c\in \text{conn}(\omega)} \int_{\omega_c} \frac{v^2(s)}{\text{dist}\big(s,\partial\omega_c\big)}\,\mathrm ds\\
	&= \|v\|_{\frac{1}{2},\omega}^2 + \int_{\omega} \frac{v^2(s)}{\text{dist}\big(s,\partial\omega\big)}\,\mathrm ds
	= \|v\|^2_{H_{00}^{1/2}(\omega)}.
	\end{align*}
	Therefore, from (\ref{eq:poincare2connected}), 
	\begin{equation*}
	\|v\|_{0,\omega}^2 = \sum_{\omega_c\in \text{conn}(\omega)} \|v\|^2_{0,\omega_c} \lesssim \sum_{\omega_c\in \text{conn}(\omega)} \left|\omega_c\right|^\frac{1}{n-1} \|v\|^2_{H_{00}^{1/2}\left(\omega_c\right)} \lesssim \left|\omega\right|^\frac{1}{n-1} \|v\|_{H^{1/2}_{00}(\omega)}^2.
	\end{equation*}
\end{proof}

\begin{mylemma}[] \label{lemma:inverseineq2}
	Let $\omega$ be an $(n-1)$-dimensional manifold in $\mathbb{R}^{n}$ that is isotropic according to Definition \ref{as:isotropy}. Then for all $v\in L^2(\omega)$, 
	$$\|v\|_{H_{00}^{-1/2}(\omega)} \lesssim \left|\omega\right|^\frac{1}{2(n-1)} \|v\|_{0,\omega}.$$
\end{mylemma}
\begin{proof}
	Since $H_{00}^{-\frac{1}{2}}(\omega)$ is the dual space of $H_{00}^\frac{1}{2}(\omega)$,
	then by Appendix \ref{lemma:poincarebd2}, we obtain
	\begin{align*}
	\|v\|_{H_{00}^{-1/2}(\omega)} &= \sup_{\substack{z\in H^{1/2}_{00}(\omega)\\ z\neq 0}} \frac{\displaystyle\int_{\omega}vz \,\mathrm ds}{\|z\|_{H_{00}^{1/2}(\omega)}} \\
	&\leq \sup_{\substack{z\in H^{1/2}_{00}(\omega)\\ z\neq 0}} \frac{\|v\|_{0,\omega}\,\|z\|_{0,\omega}}{\|z\|_{H_{00}^{1/2}(\omega)}} \\
	& \lesssim \sup_{\substack{z\in H^{1/2}_{00}(\omega)\\ z\neq 0}} \frac{\|v\|_{0,\omega} \left|\omega\right|^\frac{1}{2(n-1)} \|z\|_{H_{00}^{1/2}(\omega)}}{\|z\|_{H_{00}^{1/2}(\omega)}} = \left|\omega\right|^\frac{1}{2(n-1)} \|v\|_{0,\omega}.
	\end{align*}
\end{proof}

\begin{mylemma}[Inverse inequality I] \label{lemma:inverseineq}
	Let $\omega$ be an open $(n-1)$-dimensional manifold in $\mathbb{R}^{n}$ that is isotropic and regular according to Definitions \ref{as:isotropy} and \ref{as:pwsmoothshapereg}, and let ${m} \in \mathbb N$. 
	Then for all $p\in \mathbb Q_{{m},0}^\mathrm{pw}(\omega)$, 
	$$\|p\|_{0,\omega} \lesssim |\omega|^{-\frac{1}{2(n-1)}} \|p\|_{H_{00}^{-1/2}(\omega)},$$
	where the hidden constant increases with ${m}$. 
\end{mylemma}
\begin{proof}
	For all $q\in \mathbb Q_{{m},0}^\text{pw}(\omega)\subset H_0^1(\omega)$, the following inverse estimate is well known (see Ref.~\cite[Theorem~3.2]{graham} for example): with the notation of Definition \ref{as:pwsmoothshapereg}, for all $\ell=1,\ldots,L_\omega$,
	$$ \big| q\vert_{\omega_\ell} \big|_{1,\omega_\ell} \lesssim \left|\omega_\ell\right|^{-\frac{1}{n-1}} \big\| q\vert_{\omega_\ell} \big\|_{0,\omega_\ell},$$
	and the hidden constant increases with $m$. Therefore, since $\omega$ is isotropic and shape regular,
	$$\left| q \right|_{1,\omega} \lesssim \max_{\ell=1,\ldots,L_\omega}\left(\left|\omega_\ell\right|^{-\frac{1}{n-1}}\right) \left\| q \right\|_{0,\omega}\lesssim \left|\omega\right|^{-\frac{1}{n-1}} \left\| q \right\|_{0,\omega}.$$
	Moreover, from Ref.~\cite{triebel}, we know that the interpolation space $$\left[ H_0^1\left(\omega\right), L^2\left(\omega\right)\right]_\frac{1}{2} = H_{00}^\frac{1}{2}\left(\omega\right)$$
	(see also Ref.~\cite[Theorem~11.7]{lionsmagenes}). Therefore, from Ref.~\cite[Proposition~2.3]{lionsmagenes}, for all $q\in \mathbb Q_{{m},0}^\text{pw}(\omega)$, 
	\begin{align} \label{eq:inverseH0012}
	\|q\|_{H_{00}^{1/2}(\omega)} &\lesssim |q|_{1,\omega}^\frac{1}{2} \,\|q\|_{0,\omega}^\frac{1}{2} \lesssim \left|\omega\right|^{-\frac{1}{2(n-1)}} \|q\|_{0,\omega}.
	\end{align}
	Consequently, for all $p\in \mathbb Q_{{m},0}^\text{pw}(\omega)\subset H_{00}^{-\frac{1}{2}}(\omega)$, since $\mathbb Q_{{m},0}^\text{pw}(\omega)\subset H_{00}^{\frac{1}{2}}(\omega)$,
	\begin{align}
	\|p\|_{0,\omega} = \frac{\displaystyle \int_\omega p^2 \,\mathrm ds}{\|p\|_{0,\omega}} \leq \sup_{\substack{q\in \mathbb Q_{{m},0}^\text{pw}(\omega)\\ q\neq 0}} \frac{\displaystyle \int_\omega pq \,\mathrm ds}{\|q\|_{0,\omega}} &\lesssim \left|\omega\right|^{-\frac{1}{2(n-1)}} \sup_{\substack{q\in \mathbb Q_{{m},0}^\text{pw}(\omega)\\ q\neq 0}} \frac{\displaystyle \int_\omega pq \,\mathrm ds}{\|q\|_{H_{00}^{1/2}(\omega)}} \nonumber \\
	& \leq \left|\omega\right|^{-\frac{1}{2(n-1)}} \sup_{\substack{v\in H_{00}^{1/2}(\omega)\\ v\neq 0}} \frac{\displaystyle \int_\omega pv \,\mathrm ds}{\|v\|_{H_{00}^{1/2}(\omega)}} \nonumber \\
	&= \left|\omega\right|^{-\frac{1}{2(n-1)}} \|p\|_{H_{00}^{-1/2}(\omega)}. \label{eq:stepsproofinversedual}
	\end{align}\\
	
\end{proof}

For the following lemmas, let $D\subset \mathbb R^n$ be an open bounded domain, and let $\partial D = \bigcup_{k=1}^{K+1}\overline{\omega_k}$ for some $K\in\mathbb{N}$, such that $\omega_i \cap \omega_j = \emptyset$, for all $i,j=1,\ldots,K+1$, and let $\omega = \text{int}\left(\bigcup_{k=1}^K\overline{\omega_k}\right)$. Moreover, let
$$H:= \left\{ v\in H_{00}^\frac{1}{2}(\omega) : v|_{\omega_k} \in H_{00}^\frac{1}{2}(\omega_k), \forall k=1,\ldots,K \right\} \subset H_{00}^\frac{1}{2}(\omega)$$
equipped with the norm $\|\cdot\|_H := \displaystyle \left(\sum_{k=1}^K \big\| \cdot\vert_{\omega_k} \big\|^2_{H_{00}^\frac{1}{2}(\omega_k)}\right)^\frac{1}{2}$, and let $H^*$ be its dual space, equipped with the dual norm $\|\cdot\|_{H^*}$. 

\begin{mylemma}[] \label{lemma:sumH0012}
	For all $v\in H$,
	$$\left\| v \right\|_{H_{00}^{1/2}\left( \omega\right)} \leq \sqrt{K} \|v\|_H,$$
	and for all $w \in H_{00}^{-\frac{1}{2}}(\omega)$, 
	$$\left\| w \right\|_{H^*} \leq \sqrt{K}\|w\|_{H_{00}^{-1/2}(\omega)}.$$
\end{mylemma}
\begin{proof}
	Let $v\in H\subset H_{00}^\frac{1}{2}(\omega)$, and let $v\vert_{\omega_k}^\star$ be the extension of $v\vert_{\omega_k}$ by $0$ on $\partial D$. 
	Then by triangular inequality,
	\begin{align}
	\| v\|_{H_{00}^{1/2}(\omega)} = \left\| \sum_{k=1}^K v|_{\omega_k}^\star \right\|_{H_{00}^{1/2}(\omega)} &\leq \sum_{k=1}^K \left\| v|_{\omega_k}^\star \right\|_{H_{00}^{1/2}(\omega)} = \sum_{k=1}^K \left\| v|_{\omega_k}^\star \right\|_{\frac{1}{2}, \partial D} \nonumber \\
	&= \sum_{k=1}^K \left\| v|_{\omega_k}^\star \right\|_{H_{00}^{1/2}(\omega_k)} \leq \sqrt{K} \|v\|_H. \label{eq:justproved}
	\end{align}
	Moreover, for all $w\in H_{00}^{-\frac{1}{2}}(\omega) \subset H^*,$ using (\ref{eq:justproved}), 
	\begin{align*}
	\|w\|_{H^*} = \sup_{\substack{v\in H\\ v\neq 0}} \frac{\displaystyle\int_\omega wv \,\mathrm ds}{\|v\|_H} 
	&\leq \sqrt{K} \sup_{\substack{v\in H\\ v\neq 0}} \frac{\displaystyle\int_\omega wv \,\mathrm ds}{\|v\|_{H_{00}^{1/2}(\omega)}} \\
	&\leq\sqrt{K}\sup_{\substack{v\in H_{00}^\frac{1}{2}(\omega)\\ v\neq 0}} \frac{\displaystyle\int_\omega wv \,\mathrm ds}{\|v\|_{H_{00}^{1/2}(\omega)}} = \sqrt{K} \|w\|_{H_{00}^{-1/2}(\omega)}.
	\end{align*}
\end{proof}

\begin{mylemma}[Inverse inequality II] \label{lemma:inverseineqH}
	Assume that $\omega$ is isotropic and regular according to Definitions \ref{as:isotropy} and \ref{as:pwsmoothshapereg}, and let ${m} \in \mathbb N$. 
	Then for all piecewise polynomial $p\in \mathbb Q_m^0$, where $$\mathbb Q_m^0 := \left\{ q \in \mathbb Q_{{m},0}^\mathrm{pw}(\omega) : q\vert_{\omega_k}\in \mathbb Q_{{m},0}^\mathrm{pw}(\omega_k), \forall k=1,\ldots,K \right\}\subset H,$$
	then
	$$\|p\|_{0,\omega} \lesssim |\omega|^{-\frac{1}{2(n-1)}} \|p\|_{H^*},$$
	where the hidden constant increases with ${m}$. 
\end{mylemma}
\begin{proof}
	For all $q\in \mathbb Q_m^0$ and all $k=1,\ldots,K$, $q|_{\omega_k}\in \mathbb Q_{m,0}^\text{pw}(\omega_k)$, and thus from (\ref{eq:inverseH0012}) and since $\omega$ is regular, 
	$$\big\|q|_{\omega_k}\big\|_{H_{00}^{1/2}(\omega_k)} \lesssim \left|\omega_k\right|^{-\frac{1}{2(n-1)}} \big\|q|_{\omega_k}\big\|_{0,\omega_k} \lesssim |\omega|^{-\frac{1}{2(n-1)}}\big\|q|_{\omega_k}\big\|_{0,\omega_k}.$$
	Therefore, 
	$$\|q\|_H = \left(\sum_{k=1}^K \big\| \cdot\vert_{\omega_k} \big\|^2_{H_{00}^\frac{1}{2}(\omega_k)}\right)^\frac{1}{2} \lesssim |\omega|^{-\frac{1}{2(n-1)}} \|q\|_{0,\omega}.$$
	Consequently, for all $p\in \mathbb Q_m^0\subset H$, following the same steps as in (\ref{eq:stepsproofinversedual}) of the proof of Appendix \ref{lemma:inverseineq}, replacing $H_{00}^{-\frac{1}{2}}(\omega)$ by $H^*$, $H_{00}^{\frac{1}{2}}(\omega)$ by $H$, and $\mathbb Q_{{m},0}^\text{pw}(\omega)$ by $\mathbb Q_m^0$, then
	$$\|p\|_{0,\omega} \lesssim |\omega|^{-\frac{1}{2(n-1)}} \|p\|_{H^*}.$$
\end{proof}

\section*{Acknowledgment}
{The authors would like to thank Stefan Sauter for the fruitful discussions on the subject. Moreover,} the authors gratefully acknowledge the support of the European Research Council, via the ERC AdG project CHANGE n.694515. 
R. V\'azquez also thanks the support of the Swiss National Science Foundation via the project HOGAEMS n.200021\_188589.

%Référence et bibliographie :
%\newpage{\tiny {\tiny }}
%\nocite{*}
\addcontentsline{toc}{section}{Bibliography}
\bibliography{bib}
\bibliographystyle{ieeetr}
%\bibliographystyle{unsrt}

%\bibliographystyle{model1-num-names}
%\bibliography{sample.bib}

%% Authors are advised to submit their bibtex database files. They are
%% requested to list a bibtex style file in the manuscript if they do
%% not want to use model1-num-names.bst.

%% References without bibTeX database:

% \begin{thebibliography}{00}

%% \bibitem must have the following form:
%%   \bibitem{key}...
%%

% \bibitem{}

% \end{thebibliography}

\end{document}